\swapnumbers \theoremstyle{plain}
\newtheorem{thm}{Theorem}[section]
\newtheorem{lem}[thm]{Lemma}
\newtheorem{lem-defn}[thm]{Lemma and definition}
\newtheorem{cor}[thm]{Corollary}
\newtheorem{prop}[thm]{Proposition}
\theoremstyle{definition}\newtheorem{rem}[thm]{Remark}
\newtheorem{defn}[thm]{Definition}
\theoremstyle{definition}\newtheorem{leer}[thm]{}
\newcommand{\Cal}{\mathcal}
\newcommand{\R}{\mathbb{R}}
\newcommand{\C}{\mathbb{C}}
\newcommand{\D}{\mathbb{D}}
\newcommand{\N}{\mathbb{N}}
\newcommand{\Z}{\mathbb{Z}}
\newcommand{\E}{\mathbb{E}}
\newcommand{\B}{\mathbb{B}}
\newcommand{\s}{\mathbb{S}}
 \DeclareMathOperator{\im}{Im}
\DeclareMathOperator{\ke}{Ker} \DeclareMathOperator{\rea}{Re}
 \numberwithin{equation}{section}
\title {On the similarity of holomorphic matrices}
\author{J\"urgen Leiterer}
\address{Institut f\"ur Mathematik \\
Humboldt-Universit\"at zu Berlin \\Rudower Chaussee 25\\D-12489 Berlin , Germany}
\email{leiterer@mathematik.hu-berlin.de}
\thanks{Some of the results of this text are contained already in the preprint ``Local and global similarity of holomorphic
matrices'' sent to some colleagues in September 2016 and later posted in the arXiv \cite{Le1} (partially with different proofs).}
\thanks{MSC 2010: 32A99, 47A56, 15A21.}
\thanks{Keywords: holomorphic matrices, similarity, Oka principle}
\date{}
\dedicatory{Dedicated to the memory of Gennadi Henkin, teacher and coauthor}
\begin{document}

\begin{abstract} R. Guralnick \cite{Gu} proved that two holomorphic matrices on a noncompact connected Riemann surface, which are locally holomorphically similar, are globally holomorphically similar. We generalize this to  (possibly, non-smooth) one-dimensional Stein spaces. For Stein spaces of arbitrary dimension, we prove that global $\Cal C^\infty$ similarity implies global holomorphic similarity, whereas global continuous similarity is not sufficient.
\end{abstract}

\maketitle

\section{Introduction}

Let $X$ be a  complex space, e.g., a complex manifold or an analytic subset of a complex manifold. Denote by  $\mathrm{Mat}(n\times n,\C)$ the algebra of complex $n\times n$ matrices, and by $\mathrm{GL}(n,\C)$ the group of its invertible elements.

 \begin{defn}\label{7.6.16-} Two holomorphic maps $A,B:X\to \mathrm{Mat}(n\times n,\C)$ are called
(globally) {\bf  holomorphically  similar on $ X$} if there is a holomorphic  map $H:X\to\mathrm{GL}(n,\C)$ with $B=H^{-1}AH$ on $X$.
They are called
{\bf locally holomorphically similar at a point $\xi\in X$}  if there is a neighborhood $U$ of $\xi$ such that $A\vert_U$ and $B\vert_U$ are holomorphically similar on $U$.
Correspondingly,   {\bf continuous} and {\bf $\Cal C^k$ similarity} are defined.
\end{defn}

R. Guralnick \cite{Gu} proved the following

\begin{thm}\label{24.10.16+} Suppose  $X$ is a noncompact connected Riemann surface. Then, any two holomorphic maps $A,B:X\to  \mathrm{Mat}(n\times n,\C)$, which are locally holomorphically similar at each point of $X$, are globally holomorphically similar on $X$.
\end{thm}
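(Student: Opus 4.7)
The plan is to translate the statement into a non-abelian \v{C}ech cohomology question, reduce it to a coherent-sheaf cohomology question, and then use Cartan's Theorem B on the Stein Riemann surface $X$.

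\textbf{Step 1: Cocycle from local similarities.} Using the hypothesis, pick an open cover $\{U_\alpha\}_{\alpha\in I}$ of $X$ and holomorphic maps $H_\alpha\colon U_\alpha\to\mathrm{GL}(n,\mathbb{C})$ with $B=H_\alpha^{-1}AH_\alpha$. On overlaps set $g_{\alpha\beta}=H_\alpha H_\beta^{-1}$. From $H_\alpha^{-1}AH_\alpha=H_\beta^{-1}AH_\beta$ one reads off $A\,g_{\alpha\beta}=g_{\alpha\beta}A$, so $\{g_{\alpha\beta}\}$ is a \v{C}ech $1$-cocycle valued in the sheaf of (non-abelian) groups $\mathcal{Z}_A^{*}$ whose sections are holomorphic invertible $n\times n$ matrices commuting pointwise with $A$. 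A global holomorphic similarity is equivalent to a splitting $g_{\alpha\beta}=h_\alpha h_\beta^{-1}$ with $h_\alpha\in\mathcal{Z}_A^{*}(U_\alpha)$: then the $h_\alpha^{-1}H_\alpha$ agree on overlaps and patch to a global $H\colon X\to\mathrm{GL}(n,\mathbb{C})$ satisfying $H^{-1}AH=B$, since each $h_\alpha$ commutes with $A$. Hence the task is to show that $[\{g_{\alpha\beta}\}]=0$ in $H^{1}(X,\mathcal{Z}_A^{*})$.

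\textbf{Step 2: Coherent-sheaf input.} The additive sheaf $\mathcal{Z}_A$ of holomorphic matrices commuting with $A$ is the kernel of the $\mathcal{O}_X$-linear morphism $\mathcal{O}_X^{\,n\times n}\to\mathcal{O}_X^{\,n\times n}$, $M\mapsto AM-MA$, hence coherent on $X$. Because $X$ is a noncompact Riemann surface and so Stein, Theorem B yields $H^{q}(X,\mathcal{Z}_A)=0$ for $q\geq 1$, together with Runge-type approximation of local sections of $\mathcal{Z}_A$ on compact subsets by global ones.

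\textbf{Step 3: From additive to multiplicative (the main obstacle).} The crux is to pass from the vanishing of $H^{1}(X,\mathcal{Z}_A)$ to the vanishing of the non-abelian class $[\{g_{\alpha\beta}\}]$. My plan is a Cartan-lemma-style iteration along an exhaustion $X_{1}\Subset X_{2}\Subset\cdots$ of $X$ by relatively compact Stein subdomains, each Runge in the next. On each $X_k$, refine the cover until $\|g_{\alpha\beta}-I\|$ is uniformly small, set $c_{\alpha\beta}=\log g_{\alpha\beta}\in\mathcal{Z}_A(U_\alpha\cap U_\beta)$, and use Step 2 to split additively $c_{\alpha\beta}=c_\alpha-c_\beta$ with $c_\alpha\in\mathcal{Z}_A(U_\alpha)$. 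Replacing $H_\alpha$ by $e^{-c_\alpha}H_\alpha$ (still a local similarity because $c_\alpha$ commutes with $A$) turns $g_{\alpha\beta}$ into $e^{-c_\alpha}g_{\alpha\beta}e^{c_\beta}$, a cocycle that differs from $I$ by terms of higher order in $\|c_{\alpha\beta}\|$. Iterating, and using Runge approximation in $\mathcal{Z}_A$ to match the corrections across successive levels of the exhaustion, one obtains in the limit $h_\alpha\in\mathcal{Z}_A^{*}(U_\alpha)$ with $g_{\alpha\beta}=h_\alpha h_\beta^{-1}$, and then $H:=h_\alpha^{-1}H_\alpha$ is the desired global holomorphic similarity. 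The delicate point is to set up the estimates so that the non-abelian iteration converges uniformly on compacta; I expect this, rather than the coherent-sheaf input itself, to be the main technical obstacle, and it is also where the low dimension of $X$ makes the approximation/patching tractable.
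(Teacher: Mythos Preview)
First, note that the paper does not supply its own proof of this statement: it is quoted as Guralnick's theorem, whose argument runs through the Bezout property of $\mathcal O(X)$. So there is no in-paper proof to compare against, and your proposal has to stand on its own.

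Steps 1 and 2 are fine. The gap is in Step 3, at the clause ``refine the cover until $\|g_{\alpha\beta}-I\|$ is uniformly small.'' Refining a cover only restricts the cocycle to smaller open sets; it does not alter its values. If $g_{\alpha\beta}(\xi)$ is far from $I$ at some point $\xi$, it stays far from $I$ on every set of every refinement that contains $\xi$. To force the cocycle into a small neighbourhood of $I$ you would have to premultiply the $H_\alpha$ by sections of $\mathcal Z_A^{*}$ --- but producing such sections is exactly the problem you are trying to solve, so your Newton-type iteration has no legitimate starting point. In the Cartan--Grauert and Forster--Ramspott scheme this starting point is supplied by a prior \emph{continuous} trivialization of the cocycle (on a non-compact Riemann surface one uses that $X$ has the homotopy type of a $1$-complex and that each fibre $\mathrm{GCom}\,A(\zeta)$ is connected); only afterwards does one approximate and iterate. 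Your outline omits this input entirely. The one-dimensionality of $X$ does help, but at that topological stage, not at the refinement stage you point to. A secondary issue: $c_{\alpha\beta}=\log g_{\alpha\beta}$ is not an additive $1$-cocycle, since $\log$ is not a homomorphism on noncommuting matrices, so $H^{1}(X,\mathcal Z_A)=0$ cannot be applied to it directly; this defect can be absorbed into a correctly organized iteration, but again only once smallness has already been secured.
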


Actually, Guralnick proved a more general theorem for matrices with elements in a Bezout ring (with some extra properties), and then  applies this to the ring of holomorphic functions on a non-compact connected Riemann surface. The ring of holomorphic functions on an arbitrary (non-smooth) one-dimensional Stein space is not Bezout. Therefore, it seems  that Guralnick's proof of Theorem \ref{24.10.16+} cannot be generalized to the non-smooth case, at least not in a straightforward way.

Nevertheless, in the present paper, we use Guralnick's result to prove

\begin{thm}\label{18.10.16} Suppose $X$ is a one-dimensional Stein space (possibly not smooth), and let $A,B:X\to  \mathrm{Mat}(n\times n,\C)$ be two holomorphic maps, which are locally holomorphically similar at each point of $X$. Then $A$ and $B$ are globally holomorphically similar on $X$.
\end{thm}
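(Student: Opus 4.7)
My plan is to reduce to Guralnick's theorem by passing to the normalization $\pi:\tilde X\to X$. Let $\Sigma\subset X$ denote the (discrete) singular locus; then $\tilde X$ is a smooth one-dimensional Stein space, hence a disjoint union of non-compact connected Riemann surfaces. The pulled-back pair $\tilde A:=A\circ\pi$, $\tilde B:=B\circ\pi$ is locally holomorphically similar at every point of $\tilde X$: at preimages of smooth points of $X$ this is immediate because $\pi$ is a local biholomorphism, and at preimages of points of $\Sigma$ it follows by pulling back the given local similarities. Applying Theorem~\ref{24.10.16+} on each connected component of $\tilde X$ produces a global holomorphic $\tilde H:\tilde X\to\mathrm{GL}(n,\C)$ with $\tilde A\tilde H=\tilde H\tilde B$.

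The substantive task is to modify $\tilde H$ so that it descends to $X$. For each $\xi\in\Sigma$, fix pairwise disjoint neighborhoods $U_\xi$ and local holomorphic similarities $H_\xi:U_\xi\to\mathrm{GL}(n,\C)$ provided by the hypothesis. On $\pi^{-1}(U_\xi)$, both $\pi^*H_\xi$ and $\tilde H$ intertwine $\tilde A$ with $\tilde B$, so $W_\xi:=(\pi^*H_\xi)^{-1}\tilde H$ is an invertible holomorphic matrix commuting with $\tilde B$. Replacing $\tilde H$ by $\tilde H\cdot N$ for a holomorphic $N:\tilde X\to\mathrm{GL}(n,\C)$ commuting with $\tilde B$ preserves the intertwining; near $\xi$ the new map reads $(\pi^*H_\xi)(W_\xi N)$, and it descends to $U_\xi$ precisely when $W_\xi N$ lies in $\pi^*\mathrm{Mat}(n,\Cal O(U_\xi))$.

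To produce $N$, I would invoke Cartan's Theorem~B. Let $\Cal Z$ and $\tilde{\Cal Z}$ be the coherent sheaves of holomorphic germs in the commutant of $B$ on $X$, respectively of $\tilde B$ on $\tilde X$. The push-forward $\pi_*\tilde{\Cal Z}$ is a coherent $\Cal O_X$-module containing $\Cal Z$, and the quotient $\Cal Q:=\pi_*\tilde{\Cal Z}/\Cal Z$ is a skyscraper sheaf on $\Sigma$. Define an $\Cal O_X$-linear morphism $\phi:\pi_*\tilde{\Cal Z}\to\Cal Q$ by $\phi(N)=W_\xi N\bmod\Cal Z$ in a neighborhood of each $\xi$ (and zero away from $\Sigma$); disjointness of the $U_\xi$'s makes this well-defined. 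The descent condition is exactly $\phi(N)=0$, so we seek $N\in H^0(X,\ker\phi)$. Near each $\xi$ the germ $W_\xi^{-1}$ lies in $\ker\phi$, showing that $\ker\phi$ is locally isomorphic to $\Cal Z$ (via left-multiplication by $W_\xi$) and thus coherent; by Cartan's Theorems~A and~B, applied on the Stein space $X$, its global sections generate all stalks.

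The main obstacle I anticipate is arranging $N$ to be \emph{invertible} on all of $\tilde X$, and not merely nonzero. At any $\eta\in\pi^{-1}(\Sigma)$ the stalk $(\ker\phi)_\eta$ contains the invertible germ $W_{\pi(\eta)}^{-1}$, so by Cartan~A the value $N(\eta)$ can be prescribed to be invertible on any finite subset of $\pi^{-1}(\Sigma)$. An exhaustion of $X$ by relatively compact Stein subdomains, combined with a Mittag-Leffler-type approximation, should upgrade this to invertibility on the entire countable set $\pi^{-1}(\Sigma)$; on the smooth part $\tilde X\setminus\pi^{-1}(\Sigma)$, a generic perturbation within $\ker\phi$ (using Cartan~A to supply enough global sections) should remove the analytic zero set of $\det N$. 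Once such an invertible $N$ is obtained, $\tilde H N=\pi^*H$ for a holomorphic $H:X\to\mathrm{GL}(n,\C)$, and $AH=HB$ by construction, completing the proof.
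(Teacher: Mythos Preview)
Your overall architecture—pass to the normalization $\tilde X$, apply Guralnick there, then descend—matches the paper's strategy. The divergence, and the gap, lies in the descent step.

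You correctly isolate the obstacle: a global section $N$ of the coherent sheaf $\ker\phi$ that is \emph{pointwise invertible} on all of $\tilde X$. But your proposed fix (Cartan~A/B plus Mittag--Leffler on $\pi^{-1}(\Sigma)$, followed by a ``generic perturbation'' to kill the zeros of $\det N$ on the smooth part) is not a proof. Cartan's theorems give you abundant sections of the \emph{additive} sheaf $\ker\phi$, but the invertibility condition $\det N\ne 0$ is multiplicative and cuts out a non-linear locus in each fibre. Knowing that at each point \emph{some} global section is invertible there does not produce a single section invertible everywhere: any fixed linear combination will again have a (discrete) vanishing locus for $\det$, and nothing in Theorems~A/B pushes that locus off $\tilde X$. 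Note that the identity section $I$, which would do the job on the smooth part, is \emph{not} in $\ker\phi$, since $\phi(I)=W_\xi\bmod\Cal Z\ne 0$ in general. What you are really asking for is a holomorphic trivialization of a $\mathrm{GCom}\,B$-valued cocycle—a non-abelian $H^1$ problem—and Cartan~A/B alone does not settle that.

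The paper handles exactly this point via the Forster--Ramspott Oka principle for Oka pairs (Proposition~\ref{28.6.16-}). Rather than trying to descend a single $\tilde H$, it packages the local similarities $H_i$ on $X$ into the $\Cal O^{\mathrm{GCom}\,A}$-cocycle $\{H_iH_j^{-1}\}$ and proves (Lemma~\ref{17.10.16--}) that every such cocycle is $\Cal O^{\mathrm{GCom}\,A}$-trivial. The proof of that lemma uses the normalization and Guralnick just as you do, but then invokes the topological extension Lemma~\ref{25.6.16} to manufacture a \emph{continuous} trivialization lying in $\widehat{\Cal C}^{\mathrm{GCom}\,A}$, and finally applies Proposition~\ref{28.6.16-} to upgrade it to a holomorphic one. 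In effect, the content your ``generic perturbation'' step would have to supply is precisely what Proposition~\ref{28.6.16-} delivers; without that input, or an independent argument of comparable strength, the invertibility of $N$ is unproven and the argument is incomplete.
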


In the proof (given in Section \ref{28.10.16'}) we take advantage of the fact that the normalization  of $X$ is a Riemann surface each connected component of which  is noncompact, and there, we can apply Guralnick's result. Then we use the Oka principle for Oka pairs of  Forster and Ramspott \cite{FR1} (Proposition \ref{28.6.16-} below) to ``push down'' this to $X$.

The Oka principle of Forster and Ramspott is valid also for Stein spaces of arbitrary dimension. We deduce from it (in Section \ref{28.10.16}) the following Oka principle for the similarity of holomorphic matrices.

\begin{thm}\label{26.10.16} Suppose  $X$ is a Stein space (of arbitrary dimension), and let $A,B:X\to  \mathrm{Mat}(n\times n,\C)$ be  two holomorphic maps such that there exsists a continuous map $C:X\to \mathrm{GL}(n,\C)$ satisfying the following two conditions:
\begin{itemize}
\item[(a)] $B=C^{-1}AC$ on $X$.
\item[(b)] For each $\xi\in X$, there exist a neighborhood $U_\xi$ of $\xi$ and a holomorphic map $H_\xi:U_\xi\to \mathrm{GL}(n,\C)$ with $B=H_\xi^{-1}AH_\xi^{}$ on $U_\xi$ and $H_\xi(\xi)=C(\xi)$.
\end{itemize}
Then $A$ and $B$ are globally holomorphically similar on $X$.
\end{thm}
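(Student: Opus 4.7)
The plan is to realize holomorphic similarities between $B$ and $A$ as holomorphic sections of a natural analytic object over $X$, and then to apply the Forster--Ramspott Oka principle (Proposition~\ref{28.6.16-}) to deform the given global continuous similarity $C$ into a global holomorphic one.

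First, introduce the closed analytic subset
\[
Z=\bigl\{(x,g)\in X\times\mathrm{GL}(n,\C) : g^{-1}A(x)g=B(x)\bigr\}
\]
together with the projection $\pi:Z\to X$. Holomorphic (resp.\ continuous) similarities between $B$ and $A$ are exactly holomorphic (resp.\ continuous) sections of $\pi$. Hypothesis (a) produces the continuous section $x\mapsto(x,C(x))$, while hypothesis (b) produces, for every $\xi\in X$, a local holomorphic section $x\mapsto(x,H_\xi(x))$ near $\xi$ with $H_\xi(\xi)=C(\xi)$---that is, the global continuous section and the local holomorphic ones agree pointwise at their centers, which is precisely the compatibility needed to run an Oka-type deformation.

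Next, whenever $\pi^{-1}(x)$ is non-empty it is a left coset of the centralizer $\{g\in\mathrm{GL}(n,\C):gA(x)=A(x)g\}$ in $\mathrm{GL}(n,\C)$. Hence the sheaf $\Cal S$ of holomorphic sections of $\pi$ is a torsor for the sheaf of groups $\Cal Z$ whose sections over $U$ are the holomorphic maps $U\to\mathrm{GL}(n,\C)$ commuting pointwise with $A$, and analogously the continuous section sheaf $\Cal S^c$ is a $\Cal Z^c$-torsor. The local holomorphic sections from (b) make $\Cal S$ locally trivial, so it determines a class $[\Cal S]\in H^1(X,\Cal Z)$; the global continuous section $C$ from (a) makes $[\Cal S^c]=0$ in $H^1(X,\Cal Z^c)$. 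The Forster--Ramspott Oka principle for the pair $(\Cal Z,\Cal Z^c)$ should give that the natural map $H^1(X,\Cal Z)\to H^1(X,\Cal Z^c)$ is a bijection on non-abelian cohomology (since $X$ is Stein), whence $[\Cal S]=0$ and $\Cal S$ admits a global holomorphic section $H$ with $B=H^{-1}AH$ on $X$, as required.

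The main obstacle is verifying that $(\Cal Z,\Cal Z^c)$ actually satisfies the hypotheses of Proposition~\ref{28.6.16-}. The sheaf $\Cal Z$ is the sheaf of holomorphic sections of the analytic group-scheme $\{(x,g):gA(x)=A(x)g\}\subset X\times\mathrm{GL}(n,\C)$, whose fibres---the centralizers of the matrices $A(x)$---can jump in dimension at special points of $X$, so this is not a locally trivial bundle of complex Lie groups. Nevertheless, these centralizers are linear-algebraic subgroups of $\mathrm{GL}(n,\C)$, and Forster--Ramspott's framework is tailored to handle exactly such sheaf-of-groups situations via non-abelian cohomology. If a direct application is not immediate, a stratification of $X$ by the Jordan type of $A(x)$ combined with an induction over the strata should reduce the problem to the case of a centralizer of constant dimension, where the usual Grauert-style Oka principle applies.
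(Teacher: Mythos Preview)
Your overall strategy---realize local holomorphic similarities as local sections trivializing an $\Cal O^{\mathrm{GCom}\,A}$-torsor, use $C$ to trivialize continuously, and invoke Forster--Ramspott---is exactly the paper's approach. But there is a genuine gap in the continuous side of the argument, and your last paragraph shows you sensed it without locating it.

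The pair to which Proposition~\ref{28.6.16-} applies is \emph{not} $(\Cal O^{\mathrm{GCom}\,A},\Cal C^{\mathrm{GCom}\,A})$ but $(\Cal O^{\mathrm{GCom}\,A},\widehat{\Cal C}^{\mathrm{GCom}\,A})$, where $\widehat{\Cal C}^{\mathrm{GCom}\,A}$ (Definition~\ref{24.8.16''}) consists of those continuous sections whose value at every point can be matched by a local \emph{holomorphic} section of $\mathrm{GCom}\,A$. This is not a technicality: the paper's counterexample (Theorem~\ref{10.1.16'}) exhibits holomorphic $A,B$ that are globally continuously similar and locally holomorphically similar, yet not globally holomorphically similar. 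In your language, that example gives a $\Cal Z$-torsor which is $\Cal Z^c$-trivial but not $\Cal Z$-trivial; so the map $H^1(X,\Cal Z)\to H^1(X,\Cal Z^c)$ is \emph{not} injective, and the Oka principle you invoke for $(\Cal Z,\Cal Z^c)$ is simply false. No stratification argument will rescue it.

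The correct repair is short and uses condition~(b) in exactly the place you alluded to but did not exploit. Choose local holomorphic similarities $H_i$ on a cover $\{U_i\}$; the cocycle $\{H_iH_j^{-1}\}$ lies in $\Cal O^{\mathrm{GCom}\,A}$. The continuous sections $c_i:=H_iC^{-1}$ satisfy $c_ic_j^{-1}=H_iH_j^{-1}$, so they trivialize the cocycle continuously. The point is to check $c_i\in\widehat{\Cal C}^{\mathrm{GCom}\,A}(U_i)$: given $\xi\in U_i$, condition~(b) supplies a holomorphic $H_\xi$ with $B=H_\xi^{-1}AH_\xi$ and $H_\xi(\xi)=C(\xi)$, and then $H_iH_\xi^{-1}$ is a local holomorphic section of $\mathrm{GCom}\,A$ with value $H_i(\xi)C(\xi)^{-1}=c_i(\xi)$ at $\xi$. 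Now Proposition~\ref{28.6.16-} applies directly, with no need to worry about jumping centralizer dimensions or stratifications.
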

Note that conditions (a) and (b) imply that
\begin{itemize}
\item[(a')] $A$ and $B$ are globally continuously similar on $X$.
\item[(b')] $A$ and $B$ are locally holomorphically similar at each point of $X$.
\end{itemize}
However, conditions  (a') and (b') alone do not imply global holomorphic similarity. We show this by a counterexample (Theorem \ref{10.1.16'} below).

There are different criteria for local holomorphic similarity, which are known or can be easily obtained from known results. They are contained  in the following theorem (with invertible $\Phi$).

\begin{thm}\label{28.6.16**} Let $A,B:X\to \mathrm{Mat}(n\times n,\C)$ be holomorphic, $\xi\in X$ and $\Phi\in \mathrm{Mat}(n\times n,\C)$ such that $\Phi B(\xi)=A(\xi)\Phi$.
Suppose at least one of the following conditions is satisfied.
\begin{itemize}
\item[(i)] {\em (Wasow's criterion)} The dimension of the complex vector space
\begin{equation}\label{30.6.16}
\Big\{\Theta\in \mathrm{Mat}(n\times n,\C)\;\Big\vert\;\Theta B(\zeta)=A(\zeta)\Theta\Big\}
\end{equation}is constant for $\zeta$ in some neighborhood of $\xi$.
\item[(ii)]  {\em(Smith's criterion)} $\dim X=1$, $\xi$ is a smooth point of $X$,  and there exist a neighborhood $V_\xi$ of $\xi$ and a continuous map $C_\xi:V_\xi\to \mathrm{GL}(n,\C)$ such that $C_\xi B=AC_\xi$ on $V_\xi$, and $C_\xi(\xi)=\Phi$.
\item[(iii)] {\em (Spallek's criterion)} There exist a neighborhood $V_\xi$ of $\xi$ and a $\Cal C^\infty$ map $T_\xi:V_\xi\to \mathrm{Mat}(n\times n,\C)$ such that $T_\xi B=AT_\xi$ on $V_\xi$, and
$T_\xi(\xi)=\Phi$.
\end{itemize}
Then there exist a neighborhood $U_\xi$ of $\xi$ and a holomorphic $H_\xi:U_\xi\to \mathrm{Mat}(n\times n,\C)$ such that $H_\xi B=AH_\xi$ on $U_\xi$, and $H_\xi(\xi)=\Phi$.
\end{thm}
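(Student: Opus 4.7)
The plan is to treat the three criteria separately, organized around the coherent analytic sheaf $\Cal F$ of local holomorphic intertwiners, defined as the kernel of the $\Cal O_X$-linear map $L:\Cal O_X^{n^2}\to\Cal O_X^{n^2}$, $L(\Theta):=\Theta B-A\Theta$. The vector space \eqref{30.6.16} is precisely the kernel $\ke L(\zeta)$ of the fibre map at $\zeta$, so producing the required $H_\xi$ is equivalent to lifting $\Phi\in\ke L(\xi)$ through the natural evaluation $\Cal F_\xi\to\ke L(\xi)$.

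For Wasow's criterion (i), the hypothesis that $\dim\ke L(\zeta)$ is locally constant at $\xi$ means that $\rank L(\zeta)$ is locally constant, and I would invoke the classical consequence that $\Cal F=\ke L$ is then a locally free $\Cal O_X$-subsheaf of $\Cal O_X^{n^2}$ near $\xi$, with the natural injection $\Cal F_\xi/\mathfrak{m}_\xi\Cal F_\xi\hookrightarrow\ke L(\xi)$ an isomorphism of $\C$-vector spaces (dimensions match, both being $n^2-\rank L(\xi)$). Hence every $\Phi\in\ke L(\xi)$ lifts to a germ $H_\xi\in\Cal F_\xi$.

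For Spallek's criterion (iii) --- the step I expect to be the most delicate --- I plan to pass through the formal completion $\widehat{\Cal O}_{X,\xi}$. Spallek's theorem on $\Cal C^\infty$ functions on (possibly singular) analytic sets gives that the Taylor expansion of $T_\xi$ at $\xi$ is a well-defined element $\hat T\in\widehat{\Cal O}_{X,\xi}^{n^2}$, and passing to Taylor series in the relation $T_\xi B=AT_\xi$ (using that $A$ and $B$ are holomorphic, hence equal to their own Taylor series) yields the formal identity $\hat T B=A\hat T$, with $\hat T\equiv\Phi\pmod{\mathfrak{m}_\xi}$. Flatness of $\Cal O_{X,\xi}\to\widehat{\Cal O}_{X,\xi}$ applied to the exact sequence $0\to\Cal F_\xi\to\Cal O_{X,\xi}^{n^2}\xrightarrow{L}\Cal O_{X,\xi}^{n^2}$ identifies $\Cal F_\xi\otimes_{\Cal O_{X,\xi}}\widehat{\Cal O}_{X,\xi}$ with the formal kernel $\hat{\Cal F}_\xi$; combined with the canonical isomorphism $\Cal F_\xi/\mathfrak{m}_\xi\Cal F_\xi\cong\hat{\Cal F}_\xi/\hat{\mathfrak{m}}_\xi\hat{\Cal F}_\xi$, this forces $\Phi$, the image of $\hat T$ in $\ke L(\xi)$, to already lie in the image of the holomorphic evaluation.

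For Smith's criterion (ii), the smoothness of $X$ at $\xi$ together with $\dim X=1$ makes $\Cal O_{X,\xi}$ a DVR, hence a PID. Smith normal form yields $P,Q\in\mathrm{GL}(n^2,\Cal O_{X,\xi})$ and exponents $a_1,\ldots,a_{n^2}\in\{0,1,2,\ldots\}\cup\{\infty\}$ such that the matrix of $L$ satisfies $PLQ=\mathrm{diag}(z^{a_1},\ldots,z^{a_{n^2}})$, where $z$ is a local uniformizer and $z^\infty:=0$. In the new basis, both the holomorphic kernel and the continuous kernel of $L$ consist precisely of tuples $\tilde\Theta$ with $\tilde\Theta_i=0$ at every position where $a_i<\infty$: in the continuous case, $z^{a_i}f=0$ with $a_i$ finite forces $f$ to vanish on a punctured neighborhood of $\xi$ and then, by continuity, everywhere. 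Evaluation at $\xi$ therefore sends the holomorphic and the continuous kernels to the same subspace of $\ke L(\xi)$, so the continuous lift $C_\xi$ of $\Phi$ automatically admits a holomorphic counterpart $H_\xi$.
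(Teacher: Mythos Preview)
Your treatments of (i) and (ii) are essentially what the paper does. The paper introduces the linear map $\varphi_{A,B}(\zeta)\Theta=A(\zeta)\Theta-\Theta B(\zeta)$ and its matrix $M_{A,B}$; for (i) it cites the standard fact that constant kernel dimension makes $\{\ke\varphi_{A,B}(\zeta)\}$ a holomorphic sub-vector bundle (your ``$\Cal F$ locally free'' statement), and for (ii) it applies Smith normal form over the local ring at a smooth point of a curve, exactly as you do.

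The gap is in (iii). Your sentence ``Spallek's theorem on $\Cal C^\infty$ functions \ldots\ gives that the Taylor expansion of $T_\xi$ at $\xi$ is a well-defined element $\hat T\in\widehat{\Cal O}_{X,\xi}^{n^2}$'' is not correct as stated: a $\Cal C^\infty$ function on a complex space has no canonical Taylor expansion in the \emph{holomorphic} formal completion (already on $X=\C$, the function $\vert z\vert^2$ has Taylor series $z\bar z\notin\C[[z]]$). If what you intend is to take an ambient $\Cal C^\infty$ extension, expand in $z,\bar z$, and then set $\bar z=0$, you must check that the result is independent of the extension modulo $\widehat I_\xi$ and that the relation $T_\xi B=AT_\xi$, which holds only on $X$, survives this operation. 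At smooth points this is elementary; at singular points it needs a nontrivial input of Malgrange/Tougeron type (that the $\Cal C^\infty$ ideal of $X$ is generated by holomorphic and antiholomorphic equations). You do not supply this, and labelling it ``Spallek's theorem'' is either imprecise or circular: the Spallek result the paper actually cites, \cite[Satz~5.4]{Sp1} (recorded as Proposition~\ref{8.12.15'}), is already the full statement that a $\Cal C^k$ solution of $Mf=0$ can be replaced near $\xi$ by a holomorphic one with the same value at $\xi$. Once that is granted, your faithful-flatness detour is unnecessary; the paper simply applies Proposition~\ref{8.12.15'} with $M=M_{A,B}$ and is done.
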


Proofs or  references (explaining also the role of the names `Wasow, Smith and Spallek') for the statements contained in this theorem will be given in  Section \ref{local}.

From Spallek's criterion it follows that each $\Cal C^\infty$ map $C:X\to \mathrm{GL}(n,\C)$ satisfying condition (a) in Theorem \ref{26.10.16} automatically also satisfies condition (b). Therefore, Theorem \ref{26.10.16} has  the following

\begin{cor}\label{26.10.16''} Suppose  $X$ is a Stein space (of arbitrary dimension). Let $A,B:X\to  \mathrm{Mat}(n\times n,\C)$ be  two holomorphic maps, which are globally $\Cal C^\infty$ similar on $X$. Then $A$ and $B$ are globally holomorphically similar on $X$.
\end{cor}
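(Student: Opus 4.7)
The plan is to reduce the corollary to Theorem \ref{26.10.16} by showing that the given $\Cal C^\infty$ similarity $C$ itself already satisfies the hypotheses (a) and (b) of that theorem. Condition (a) is immediate: the hypothesis provides a $\Cal C^\infty$ (in particular, continuous) map $C:X\to \mathrm{GL}(n,\C)$ with $B=C^{-1}AC$ on $X$.

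For condition (b), fix $\xi\in X$ and set $\Phi=C(\xi)\in\mathrm{GL}(n,\C)$. Rewriting the similarity relation as $CB=AC$, we see that the restriction of $C$ to a neighborhood $V_\xi$ of $\xi$ is a $\Cal C^\infty$ map $T_\xi:=C|_{V_\xi}$ satisfying $T_\xi B=AT_\xi$ on $V_\xi$ and $T_\xi(\xi)=\Phi$. The matrix $\Phi$ is invertible and intertwines $A(\xi)$ and $B(\xi)$, so the hypotheses of Theorem \ref{28.6.16**}(iii) (Spallek's criterion) are met. That theorem produces a neighborhood $U_\xi\subset V_\xi$ of $\xi$ and a holomorphic map $H_\xi:U_\xi\to \mathrm{Mat}(n\times n,\C)$ with $H_\xi B=AH_\xi$ on $U_\xi$ and $H_\xi(\xi)=\Phi=C(\xi)$. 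Since $\mathrm{GL}(n,\C)$ is open in $\mathrm{Mat}(n\times n,\C)$ and $H_\xi(\xi)\in \mathrm{GL}(n,\C)$, we may shrink $U_\xi$ so that $H_\xi(U_\xi)\subset \mathrm{GL}(n,\C)$, which yields $B=H_\xi^{-1}AH_\xi$ on $U_\xi$. This verifies condition (b) of Theorem \ref{26.10.16}.

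With both (a) and (b) in hand, Theorem \ref{26.10.16} immediately gives global holomorphic similarity of $A$ and $B$ on $X$. There is no real obstacle here: the entire work is done by Spallek's criterion, which converts a $\Cal C^\infty$ intertwiner into a holomorphic one with the same value at a prescribed point, precisely the compatibility needed to invoke the Oka-principle statement of Theorem \ref{26.10.16}. The only subtlety worth flagging is that one must use the version of Spallek's criterion that preserves the value at $\xi$ (and hence invertibility there), rather than a bare existence statement, since this value-matching is exactly what links local holomorphic similarities to the global continuous one.
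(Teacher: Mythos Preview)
Your proof is correct and follows exactly the approach indicated in the paper: Spallek's criterion (Theorem \ref{28.6.16**}(iii)) is used to show that the $\Cal C^\infty$ similarity $C$ satisfies condition (b) of Theorem \ref{26.10.16}, and then that theorem gives the conclusion. Your added remark about shrinking $U_\xi$ to ensure $H_\xi$ takes values in $\mathrm{GL}(n,\C)$ is a helpful detail that the paper leaves implicit.
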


We show by an example (Theorem \ref{10.1.16'} below) that in this corollary $\Cal C^\infty $ cannot be replaced by $\C^k$ with $k<\infty$ (the same $k$ for all $A$, $B$).

Moreover, Spallek's criterion in particular says that local $\Cal C^\infty$ similarity at a point implies local holomorphic similarity at this point, and the Smith criterion says that, if $\dim X=1$, then, at the smooth points, merely local continuous similarity implies local holomorphic similarity. Therefore Theorem \ref{18.10.16} has the following

\begin{cor}\label{28.6.16***} Suppose $X$ is a  one-dimensional Stein space. Let $A,B:X\to \mathrm{Mat}(n\times n,\C)$ be holomorphic. Then, for global holomorphic similarity of $A$ and $B$ it is sufficient that, for each point $\xi\in X$, at least one of the following holds.

\vspace{2mm}
-- $A$ and $B$ are locally $\Cal C^\infty$ similar at $\xi$.

-- $\xi$ is a smooth point of $X$, and $A$ and $B$ are locally continuously similar at $\xi$.
\end{cor}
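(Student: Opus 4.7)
The plan is to reduce the corollary to Theorem \ref{18.10.16} via the local criteria collected in Theorem \ref{28.6.16**}. By Theorem \ref{18.10.16}, it is enough to show that $A$ and $B$ are locally holomorphically similar at every $\xi \in X$. I would fix $\xi$ and split into the two cases offered by the hypotheses.

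First, suppose $A$ and $B$ are locally $\Cal C^\infty$ similar at $\xi$. Then on some neighborhood $V_\xi$ there is a $\Cal C^\infty$ map $T_\xi:V_\xi\to \mathrm{GL}(n,\C)$ with $T_\xi B=AT_\xi$. Put $\Phi:=T_\xi(\xi)\in \mathrm{GL}(n,\C)$. Spallek's criterion (iii) in Theorem \ref{28.6.16**} then delivers a neighborhood $U_\xi\subseteq V_\xi$ and a holomorphic $H_\xi:U_\xi\to \mathrm{Mat}(n\times n,\C)$ with $H_\xi B=AH_\xi$ and $H_\xi(\xi)=\Phi$. Since $\Phi$ is invertible and $H_\xi$ is continuous, shrinking $U_\xi$ if necessary gives $H_\xi(U_\xi)\subseteq \mathrm{GL}(n,\C)$, which is local holomorphic similarity at $\xi$. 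Second, suppose $\xi$ is a smooth point and $A$ and $B$ are locally continuously similar at $\xi$. The same argument works verbatim, only invoking Smith's criterion (ii) in place of (iii): start from a continuous $C_\xi$ with $C_\xi(\xi)\in \mathrm{GL}(n,\C)$, set $\Phi:=C_\xi(\xi)$, and obtain a holomorphic $\mathrm{GL}(n,\C)$-valued intertwiner near $\xi$.

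Both cases therefore yield local holomorphic similarity, so Theorem \ref{18.10.16} promotes this to global holomorphic similarity of $A$ and $B$. There is essentially no obstacle to overcome: the heavy lifting is already done in Theorem \ref{18.10.16} (which leans on Guralnick's theorem plus the Forster--Ramspott Oka principle for ``pushing down'' from the normalization) and in Theorem \ref{28.6.16**} (which packages the theorems of Smith and Spallek). The corollary is a direct combination of these two inputs, the only subtlety being the routine remark that invertibility of the initial transition matrix at $\xi$ propagates by continuity to a full neighborhood, so that the holomorphic intertwiner produced by the local criteria is genuinely $\mathrm{GL}(n,\C)$-valued.
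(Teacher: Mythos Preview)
Your proof is correct and follows exactly the approach indicated in the paper: apply the Spallek and Smith criteria from Theorem \ref{28.6.16**} to upgrade the hypothesized local $\Cal C^\infty$ (resp.\ continuous) similarity to local holomorphic similarity at each point, and then invoke Theorem \ref{18.10.16}. The paper does not spell out the details beyond the sentence preceding the corollary, so your write-up is in fact more explicit than the original, including the routine observation that invertibility of $H_\xi(\xi)$ persists on a neighborhood.
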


We show by examples (Theorem \ref{6.1.16''}) that in this corollary, at the non-smooth points, $\Cal C^\infty$ cannot be replaced by $\Cal C^k$ with $k<\infty$ (the same $k$ for all $A$, $B$ and $\xi$). However, see Remark \ref{3.9.16*}.

In \cite{Le1} and then, in  revised form, in \cite{Le2}, we  gave another proof of Theorem \ref{18.10.16}, which does not use Guralnick's Theorem \ref{24.10.16+} (and thereby is also a new proof for Guralnick's result), but which is much longer than the proof given here.  An advantage of this other proof is that it is not restricted to the one-dimensional case. For example, in a forthcoming paper, we will show that the claim of Theorem \ref{24.10.16+} remains valid if $X$ is a convex domain in $\C^2$ (for convex domains in $\C^3$, this is not true).

{\bf Acknowledgements:} I want to thank F. Forstneri\u c, F. Kutzschebauch and J. Ruppenthal for helpful discussions (in particular, see Remark \ref{3.9.16**})

\section{Notations}\label{20.6.16}

$\N$ is the set of natural numbers including $0$. $\N^*=\N\setminus\{0\}$.

If $n,m\in \N^*$, then by $\mathrm{Mat}(n\times m,\C)$ we denote the space of complex $n\times m$ matrices ($n$ rows and $m$ columns), and by
$\mathrm{GL}(n,\C)$ we denote the  group of invertible complex $n\times n$ matrices.

The unit matrix in $\mathrm{Mat}(n\times n,\C)$ will be denoted by $I_n$ or simply by $I$.

$\ke \Phi$ denotes the kernel,  $\im \Phi$ the image and $\Vert\Phi\Vert$ the operator norm of a matrix $\Phi\in \mathrm{Mat}(n\times m,\C)$ considered as a linear map between the Euclidean spaces $\C^m$ and $\C^n$.

By a complex space we always mean a {\em reduced} complex space \cite{GR} (which is the same as an {\em analytic space} in the terminology used, e.g., in \cite{C} and \cite{L}).

\section{Preparations concerning sheaves}

\vspace{2mm}
Let $X$ be a  topological  space,  and  $G$ a topological group (abelian or non-abelian).
Then we denote by $\Cal C^G_X$, or simply by $\Cal C^G$, the sheaf of continuous $G$-valued maps on $X$, i.e., $\Cal C^G$ is the map which assigns to each open $U\subseteq X$ the group
 $ \Cal C^{G}(U)$  of alll continuous maps $f:U\to {G}$ if $U\not=\emptyset$, and the group which consist only of the neutral element of $G$ if $U=\emptyset$.

All sheaves in this paper are subsheaves of $\Cal C_X^G$ (for some $X$ and some $G$), i.e., a map $\Cal F$ which assigns to each open $U\subseteq X$ a subgroup $\Cal F(U)$ of $\Cal C^G(U)$ such that:
\begin{itemize}
\item[--] If $V\subseteq U$ are non-empty open subsets of $X$, then, for each $f\in \Cal F(U)$, the restriction of $f$ to $V$, $f\vert_V$, belongs to $\Cal F(V)$.
\item[--] If $U\subseteq X$ is  open  and $f\in \Cal C^G(U)$ is  such that, for each $\xi\in U$, there is an
open neighborhood $V\subseteq U$ of $\xi$ with $f\vert_V\in \Cal F(V)$, then  $f\in \Cal F(U)$.
\end{itemize}
 If $\Cal F$ and $\Cal G$ are two subsheaves of $\Cal C^{G}_X$, then $\Cal F$ is called a subsheaf of $\Cal G$ if $\Cal F(U)\subseteq \Cal G(U)$ for all open $U\subseteq X$.

  If $X$ is  a complex space and  $G$ is a complex Lie group, then we denote by $\Cal O^{G}_X$, or simply by $\Cal O^{G}$,  the  subsheaf of $\Cal C^G_X$ which assigns to each non-empty open $U\subseteq X$, the group $\Cal O^G(U)$ of all holomorphic maps from $U$ to $G$.

\vspace{2mm}
Let $\Cal F$ be a subsheaf of $\Cal C^G$.

\vspace{2mm}
Let $\Cal U=\{U_i\}_{i\in I}$  an open covering of $X$.

A family $f_{ij}\in\Cal F(U_i\cap U_j)$, $i,j\in I$, is called a {\bf $(\Cal U,\Cal F)$-cocycle} if (with the group operation in $G$ written as a multiplication)
\begin{equation*}
f_{ij}f_{jk}=f_{ik}\quad\text{on}\quad U_i\cap U_j\cap U_k\quad\text{for all}\quad i,j,k\in I.\quad\footnote{Here and in the following we use the  convention that  statements like
``$f=g$ on $\emptyset$'' or ``$f:=g$ on $\emptyset$''
have to be omitted.}
\end{equation*}Note that then always $f^{-1}_{ij}=f^{}_{ji}$ and $f^{}_{ii}$ is identically equal to the neutral element of $G$.
The set of all $(\Cal U,\Cal F)$-cocycles will be denoted by $Z^1(\Cal U,\Cal F)$.
Two cocycles  $\{f_{ij}\}$ and $\{g_{ij}\}$ in $Z^1(\Cal U,\Cal F)$ are called {\bf $\Cal F$-equivalent} if there exists a family $h_i\in \Cal F(U_i)$, $i\in I$, such that
\[
f^{}_{ij}=h^{}_ig^{}_{ij}h^{-1}_j\quad\text{on}\quad U_i\cap U_j\quad\text{for all}\quad i,j\in I.
\] If, in this case, for all $i,j$, the map $g_{ij}$ is identically equal to the neutral element of $G$,  then $f$ is called {\bf $\Cal F$-trivial}.

\vspace{2mm}
We say that $f$ is an {\bf $\Cal F$-cocycle} (on $X$), if there exists an open covering $\Cal U$ of $X$ with $f\in Z^1(\Cal U,\Cal F)$. This covering then is called {\bf the covering of $f$}.
As usual we write
\[H^1(X,\Cal F)=0
\] to say that each $\Cal F$-cocycle is $\Cal F$-trivial.

\vspace{2mm}
Let $\Cal U=\{U_i\}_{i\in I}$ and $\Cal U^*=\{U^*_\alpha\}_{\alpha\in I^*}$  be two  open coverings of $X$ such that $\Cal U^*$ is a refinement of $\Cal U$, i.e., there is a map
$\tau:I^*\to I$ with $U^*_\alpha\subseteq U_{\tau(\alpha)}$ for all $\alpha\in I^*$.
Then we say  that
a $(\Cal U^*,\Cal F)$-cocycle $\{f^*_{\alpha}\}_{\alpha,\beta\in I^*}$  is {\bf induced} by a $(\Cal U,\Cal F)$-cocycle $\{f_{ij}\}_{i,j\in I}$
if this map $\tau$ can be chosen so that
\[
f^*_{\alpha\beta}=f_{\tau(\alpha)\tau(\beta)}^{}\quad\text{on}\quad U^*_i\cap U^*_j\quad\text{for all}\quad \alpha,\beta\in I^*.
\]
We need the following  well-known  proposition, see \cite[p. 101]{C} for ``if'' and  \cite[p. 41]{Hi} for ``only if''.
\begin{prop}\label{17.12.15--} Let $f,g\in Z^1(\Cal U,\Cal F)$ and $f^*,g^*\in Z^1(\Cal U^*,\Cal F)$ such that $f^*$ and $g^*$ are  induced by $f$ and $g$, respectively.
Then $f$ and $g$ are $\Cal F$-equivalent if and only if $f^*$ and $g^*$ are $\Cal F$-equivalent.
\end{prop}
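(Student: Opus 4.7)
The plan is to handle the two directions asymmetrically: the ``only if'' direction is a direct restriction, while the ``if'' direction requires a local-to-global gluing argument using the sheaf property of $\mathcal{F}$.

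For the easy direction, suppose $\tau:I^*\to I$ is the map witnessing the refinement, and assume $f_{ij}=h_ig_{ij}h_j^{-1}$ on $U_i\cap U_j$ for some $h_i\in\mathcal{F}(U_i)$. Then I would set $h^*_\alpha:=h_{\tau(\alpha)}\big|_{U^*_\alpha}\in\mathcal{F}(U^*_\alpha)$. Since $f^*_{\alpha\beta}=f_{\tau(\alpha)\tau(\beta)}$ and $g^*_{\alpha\beta}=g_{\tau(\alpha)\tau(\beta)}$ on $U^*_\alpha\cap U^*_\beta$, the identity $f^*_{\alpha\beta}=h^*_\alpha g^*_{\alpha\beta}(h^*_\beta)^{-1}$ follows immediately by restriction.

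For the harder direction, suppose $h^*_\alpha\in\mathcal{F}(U^*_\alpha)$ satisfy $f^*_{\alpha\beta}=h^*_\alpha g^*_{\alpha\beta}(h^*_\beta)^{-1}$. The key trick I would use is to define, for each $i\in I$ and each $\alpha\in I^*$, a local candidate on $U^*_\alpha\cap U_i$ by
\[
h_i^{(\alpha)}:=f_{i,\tau(\alpha)}\,h^*_\alpha\,g_{\tau(\alpha),i},
\]
which makes sense because $U^*_\alpha\subseteq U_{\tau(\alpha)}$, so $U^*_\alpha\cap U_i\subseteq U_i\cap U_{\tau(\alpha)}$. I would then check that on $U^*_\alpha\cap U^*_\beta\cap U_i$ the two candidates $h_i^{(\alpha)}$ and $h_i^{(\beta)}$ agree: substituting $h^*_\alpha=f^*_{\alpha\beta}h^*_\beta(g^*_{\alpha\beta})^{-1}=f_{\tau(\alpha)\tau(\beta)}h^*_\beta g_{\tau(\beta)\tau(\alpha)}$ and then collapsing via the cocycle identities $f_{i,\tau(\alpha)}f_{\tau(\alpha)\tau(\beta)}=f_{i,\tau(\beta)}$ and $g_{\tau(\beta)\tau(\alpha)}g_{\tau(\alpha),i}=g_{\tau(\beta),i}$ yields $h_i^{(\alpha)}=h_i^{(\beta)}$. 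The sheaf axioms for $\mathcal{F}$ then glue the $h_i^{(\alpha)}$ (as $\alpha$ ranges over indices with $U^*_\alpha\cap U_i\neq\emptyset$) into a single element $h_i\in\mathcal{F}(U_i)$.

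Finally, I would verify the equivalence relation $f_{ij}=h_ig_{ij}h_j^{-1}$ on $U_i\cap U_j$ by working at a point of some $U^*_\alpha$ and computing
\[
h_ig_{ij}h_j^{-1}=f_{i,\tau(\alpha)}h^*_\alpha g_{\tau(\alpha),i}g_{ij}g_{j,\tau(\alpha)}(h^*_\alpha)^{-1}f_{\tau(\alpha),j},
\]
then using $g_{\tau(\alpha),i}g_{ij}g_{j,\tau(\alpha)}=e$ (the neutral element) to collapse the middle to $f_{i,\tau(\alpha)}f_{\tau(\alpha),j}=f_{ij}$. The main obstacle is really just the bookkeeping of indices and refinement maps; once the correct definition of the local pieces $h_i^{(\alpha)}$ is guessed, everything falls out of the cocycle identities, the hypothesis, and the sheaf gluing axiom.
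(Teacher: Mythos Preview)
Your argument is correct and is the standard one; this is essentially the proof found in the references the paper cites (Hirzebruch for the easy direction, Cartan for the harder one). The paper itself does not supply a proof of this proposition at all---it merely states that the result is well known and points to \cite[p.~101]{C} and \cite[p.~41]{Hi}---so there is nothing further to compare. One small point you might make explicit: your write-up tacitly assumes that $f^*$ and $g^*$ are induced via the \emph{same} refinement map $\tau$, whereas the paper's definition of ``induced'' allows a priori different maps; this is harmless (any two refinement maps yield $\mathcal F$-equivalent induced cocycles, by the same gluing trick), but worth a remark.
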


Now let $\Cal U$ and $\Cal V$ be two arbitrary open coverings of $X$, $f\in Z^1(\Cal U,\Cal F)$ and $g\in Z^1(\Cal V,\Cal F)$.
Then we say that $f$ and $g$ are {\bf $\Cal F$-equivalent} if there exist an open covering $\Cal W$ of $X$, which is  a refinement of both $\Cal U$ and
$\Cal V$, and $(\Cal W,\Cal F)$ cocycles $f^*$ and $g^*$, which are induced by $f$ and $g$, respectively, such that $f^*$ and $g^*$ are $\Cal F$
equivalent. By Proposition \ref{17.12.15--}, this definition is in accordance with the definition of equivalence given above for $\Cal U=\Cal V$.

\section{An Oka principle and proof of Theorem \ref{26.10.16}}\label{28.10.16}

\begin{defn}\label{6.6.16''} Let  $\Phi\in\mathrm{Mat}(n\times n,\C)$.
We denote by $\mathrm{Com\,} \Phi$ the algebra of all $\Theta\in \Phi\in\mathrm{Mat}(n\times n,\C)$ with $\Phi \Theta=\Theta \Phi$, and by $\mathrm{GCom\,} \Phi$ we denote the group of invertible elements of $\mathrm{Com\,} \Phi$.
Note that, as easily seen,
\begin{align}&\label{17.8.16+}\mathrm{GCom\,} \Phi=\mathrm{GL}(n,\C)\cap \mathrm{Com\,} \Phi,\text{ and}\\
&\label{26.8.16+}
 \mathrm{Com\,}(\Gamma^{-1}\Phi\Gamma)=\Gamma^{-1}(\mathrm{Com \,}\Phi)\Gamma\quad\text{for all}\quad\Gamma\in\mathrm{GL}(n,\C).
 \end{align}
\end{defn}
\begin{lem}\label{19.1.16''}  $\mathrm{GCom\,} \Phi$  is connected, for each $\Phi\in\mathrm{Mat}(n\times n,\C)$.
 \end{lem}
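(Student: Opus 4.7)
The plan is to prove that every element $\Theta \in \mathrm{GCom\,}\Phi$ can be path-connected to the identity $I$ inside $\mathrm{GCom\,}\Phi$, which suffices since $I \in \mathrm{GCom\,}\Phi$.

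The key observation is that $\mathrm{Com\,}\Phi$ is a complex linear subspace of $\mathrm{Mat}(n\times n,\C)$, being the kernel of the linear map $\Theta \mapsto \Phi\Theta - \Theta\Phi$. In particular, $I \in \mathrm{Com\,}\Phi$, and for any $\Theta \in \mathrm{GCom\,}\Phi$ the entire complex line
\[
L(z) = (1-z) I + z \Theta, \qquad z \in \C,
\]
is contained in $\mathrm{Com\,}\Phi$. By \eqref{17.8.16+}, a point $L(z)$ lies in $\mathrm{GCom\,}\Phi$ precisely when $\det L(z) \neq 0$.

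Now $p(z) := \det L(z)$ is a polynomial in $z$ of degree at most $n$, and $p(0) = \det I = 1$, so $p$ is not identically zero. Hence the exceptional set $Z = \{z \in \C \mid p(z) = 0\}$ is finite, and $\C \setminus Z$ is a connected (in fact path-connected) open subset of $\C$ containing both $0$ and $1$. Choose any continuous path $\gamma : [0,1] \to \C \setminus Z$ with $\gamma(0) = 0$ and $\gamma(1) = 1$; then $t \mapsto L(\gamma(t))$ is a continuous path in $\mathrm{GCom\,}\Phi$ from $I$ to $\Theta$. This proves path-connectedness, hence connectedness.

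There is no real obstacle in this argument; the only point worth noting is that one should parameterize not by a real interval but by a complex line, since along a real segment from $I$ to $\Theta$ the polynomial $\det$ could vanish, whereas in $\C$ one can simply detour around the finitely many roots. Note also that no appeal to Jordan normal form or to \eqref{26.8.16+} is needed.
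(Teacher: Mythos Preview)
Your proof is correct, but it differs from the paper's. The paper constructs an explicit three-piece path: first it moves $\Theta$ to $\Theta+(1+\Vert\Theta\Vert)I$ along $\Theta+\lambda(t)I$, choosing $\lambda$ to avoid the finitely many eigenvalues of $-\Theta$; then it contracts to $(1+\Vert\Theta\Vert)I$ via $(1+\Vert\Theta\Vert)\big(\tfrac{2-t}{1+\Vert\Theta\Vert}\Theta+I\big)$, which stays invertible by a norm estimate; finally it scales down to $I$. Your argument is more streamlined: you parametrize the full complex affine line through $I$ and $\Theta$ inside $\mathrm{Com\,}\Phi$, observe that $\det L(z)$ is a nonzero polynomial (since $p(0)=1$ and $p(1)=\det\Theta\neq 0$), and use that $\C$ minus finitely many points is path-connected. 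Both arguments exploit the same phenomenon---finitely many obstructions on a complex line can be circumvented---but yours packages it more cleanly and in fact proves the standard fact that the unit group of any finite-dimensional unital $\C$-algebra is connected, whereas the paper's version is more hands-on and yields a somewhat more explicit path.
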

\begin{proof}
Let $\Theta\in \mathrm{GCom\,} \Phi$. Since the set of eigenvalues of $\Phi$ is finite, and the numbers
$0$ and $-1-\Vert \Theta\Vert$ do not belong to it,  then we can find a continuous map
$\lambda:[0,1]\to \C$ such that $\lambda(0)=0$, $\lambda(1)= 1+\Vert\Theta\Vert$ and $\Theta+\lambda(t)I\in\mathrm{GL}(n,\C)$ for all $0\le t\le 1$.
Setting
\[
\gamma(t)=\begin{cases}\Theta+\lambda(t)I\quad&\text{if}\quad 0\le t\le 1,\\
(1+\Vert \Theta\Vert)\Big(\frac{2-t}{1+\Vert \Theta\Vert}\Theta+I\Big)\quad &\text{if}\quad 1\le t\le 2,\\
(1+(3-t)\Vert\Theta\Vert) I \quad &\text{if}\quad 2\le t\le 3,
\end{cases}
\]
then we obtain a continuous path $\gamma$ in $\mathrm{GL}(n,\C)$, which connects $\Theta=\gamma(0)$ with
$I=\gamma(3)$. Since $\Theta\in \mathrm{Com\,}\Phi$, from the definition of $\gamma$ it is clear that the values of $\gamma$ belong to the algebra $\mathrm{Com\,\Phi}$. In view of \eqref{17.8.16+}, this means that $\gamma$ lies inside $\mathrm{GCom\,}\Phi$.
\end{proof}

\begin{defn}\label{18.8.16n'}
Let $X$ be a complex space, and  $A:X\to \mathrm{Mat}(n\times n,\C)$ holomorphic. We introduce the families
\[
 \mathrm{Com\,}A:=\big\{\mathrm{Com\,}A(z)\big\}_{z\in X}\quad\text{and}\quad \mathrm{GCom\,}A:=\big\{\mathrm{GCom\,}A(z)\big\}_{z\in X}.
\]
If the dimension of  $\mathrm{Com\,}A(z)$ does not depend on $z$, then it follows from the Wasow criterion (Theorem \ref{28.6.16**}, condition (i)) that $\mathrm{Com\,}A$ is a holomorphic vector bundle, but it is clear that this dimension can jump (in an analytic set).  But even if $\mathrm{Com\,}A$ is a holomorphic vector bundle,   $\mathrm{Com\,}A$ need not be locally trivial as a bundle of algebras. In particular, $\mathrm{GCom\,}A$ need not be locally trivial as a bundle of groups. Moreover, it is possible that  $\mathrm{GCom\,}A$ is not locally trivial as a bundle of topological spaces. We give an example.
\end{defn}

\begin{leer}\label{24.8.16'}{\bf Example.}
Let $X=\C$ and $A(z):=\begin{pmatrix}z &1\\0&0\end{pmatrix}$, $z\in \C$.
Then
\[
\begin{pmatrix}a&b\\c&d\end{pmatrix}\in\mathrm{Com\,}A(z)\Leftrightarrow\begin{pmatrix}za+c &zb+d\\0&0\end{pmatrix}=\begin{pmatrix}za &a\\zc&c\end{pmatrix}\Leftrightarrow c=0\text{ and }a=zb+d,
\]which implies that $\dim \mathrm{Com\,}A(z)=2$ for all $z\in\C$. However
\[
\mathrm{GCom\,}A\,(0)=\bigg\{\begin{pmatrix}a&b\\0&a\end{pmatrix}\;\bigg\vert\; a\in \C^*, b\in \C\bigg\}
\]whereas, for $z\not=0$, $\mathrm{GCom\,}A\,(z)$ is isomorphic to
\[
\bigg\{\begin{pmatrix}a&0\\0&d\end{pmatrix}\;\bigg\vert\; a,d\in \C^*\bigg\}\quad\text{if}\quad z\not=0,
\]which implies that $\pi_1\big(\mathrm{GCom\,}A\,(0)\big)=\Z$ whereas
$\pi_1\big(\mathrm{GCom\,}A\,(z)\big)=\Z^2 $ if $ z\not=0$. Hence, for $z\not=0$, $\mathrm{GCom\,}A(z)$
is  not homeomorphic to $\mathrm{GCom\,}A(0)$.
\end{leer}

\begin{defn}\label{24.8.16''}  Let $X$ be a complex space, and  $A:X\to \mathrm{Mat}(n\times n,\C)$ holomorphic.
Even if the families $\mathrm{Com\,}A$ and/or $\mathrm{GCom\,}A$ are not locally trivial, their sheaves of holomorphic and  continuous sections  are well-defined. We denote them by $\Cal O^{\mathrm{Com\,}A}$, $\Cal O^{\mathrm{GCom\,}A}$, $\Cal C^{\mathrm{Com\,}A}$ and $\Cal C^{\mathrm{GCom\,}A}$, respectively.

Further, we define subsheaves
$\widehat{\Cal C}^{\mathrm{Com\,}A}$  and $\widehat{\Cal C}^{\mathrm{GCom\,}A}$ of $\Cal C^{\mathrm{Com\,}A}$  and $\Cal C^{\mathrm{GCom\,}A}$, respectively, as follows: if  $U$ is a non-empty open subset of $X$, then   $\widehat{\Cal C}^{\mathrm{Com\,}A}(U)$ is the algebra of all continuous maps $f:U\to \mathrm{Mat}(n\times n,\C)$
such that, for each $\xi\in U$, the following condition is satisfied:
\begin{equation}\label{19.8.16}\begin{cases}&\text{there exist a neighborhood }V_\xi\text{ of }\xi\\&\text{and }h_\xi\in \Cal O^{\mathrm{Com\,}A}(V_\xi)\text{ with }h(\xi)=f(\xi),
\end{cases}\end{equation}and we set $\widehat{\Cal C}^{\mathrm{GCom\,}A}(U)=\Cal C^{\mathrm{GL}(n,\C}(U)\cap\widehat{\Cal C}^{\mathrm{Com\,}A}(U)$.
\end{defn}

The following proposition is a special case of the Oka principle for Oka pairs of  O. Forster and K. J.  Ramspott \cite[Satz 1]{FR1}.
\begin{prop}\label{28.6.16-} Let $X$ be a Stein space, and $A:X\to \mathrm{Mat}(n\times n,\C)$  holomorphic. Then each $\widehat{\Cal C}^{\mathrm{GCom\,}A}$-trivial $\Cal O^{\mathrm{GCom\,}A}$-cocycle is ${\Cal O}^{\mathrm{GCom\,}A}$-trivial.
\end{prop}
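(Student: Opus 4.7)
The statement is explicitly flagged as a special case of the Oka principle for Oka pairs of Forster and Ramspott, so the plan is to reduce to \cite[Satz 1]{FR1}; the content is to recognise that the pair $\bigl(\Cal O^{\mathrm{GCom\,}A},\widehat{\Cal C}^{\mathrm{GCom\,}A}\bigr)$ qualifies as an Oka pair in their sense.

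First, I would identify the ambient analytic structure. The family $\mathrm{Com\,}A$ arises as the kernel of the $\Cal O_X$-linear morphism $\Theta\mapsto \Theta A-A\Theta$ between two copies of the trivial bundle $X\times \mathrm{Mat}(n\times n,\C)$, and is therefore a coherent analytic subsheaf of $\Cal O^{\mathrm{Mat}(n\times n,\C)}_X$; $\Cal O^{\mathrm{GCom\,}A}$ is the subsheaf of those sections whose pointwise values are invertible, and $\widehat{\Cal C}^{\mathrm{GCom\,}A}$ is, by the very definition \eqref{19.8.16}, the sheaf of continuous sections whose pointwise values lie in the image of the germ evaluation map $\Cal O^{\mathrm{Com\,}A}_\xi\to \mathrm{Com\,}A(\xi)$ at every $\xi$. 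This is precisely the local compatibility between a holomorphic and a continuous sheaf built into the Forster--Ramspott notion of an Oka pair.

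Second, I would check the two hypotheses required by \cite[Satz 1]{FR1}. The approximation hypothesis, a Runge-type statement for holomorphic sections of the coherent sheaf $\mathrm{Com\,}A$ on Stein compacta, is a direct consequence of Cartan's Theorem B. The deformation hypothesis requires that every continuous germ in $\widehat{\Cal C}^{\mathrm{GCom\,}A}_\xi$ be homotopic within $\widehat{\Cal C}^{\mathrm{GCom\,}A}_\xi$ to a holomorphic germ. Given $f\in\widehat{\Cal C}^{\mathrm{GCom\,}A}_\xi$, the definition furnishes $h\in\Cal O^{\mathrm{GCom\,}A}_\xi$ with $h(\xi)=f(\xi)$, so $fh^{-1}$ is a $\widehat{\Cal C}^{\mathrm{GCom\,}A}$-germ at $\xi$ taking the value $I$ there. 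By Lemma~\ref{19.1.16''} the fibre $\mathrm{GCom\,}A(\xi)$ is path-connected; by a fibrewise choice of continuous paths from $(fh^{-1})(z)$ to $I$ inside $\mathrm{GCom\,}A(z)$ on a small enough neighbourhood of $\xi$, I obtain a contraction of $fh^{-1}$ to the identity within $\widehat{\Cal C}^{\mathrm{GCom\,}A}$, and composition with $h$ yields the sought homotopy from $f$ to a holomorphic germ.

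Third, with both axioms in place, \cite[Satz 1]{FR1} applies on the Stein space $X$ and delivers exactly the triviality transfer from $\widehat{\Cal C}^{\mathrm{GCom\,}A}$ to $\Cal O^{\mathrm{GCom\,}A}$. The principal obstacle throughout is conceptual rather than computational: as Example~\ref{24.8.16'} emphasises, $\mathrm{GCom\,}A$ need not be locally trivial even as a family of topological spaces, so no classical Oka--Grauert result for principal bundles applies directly. The merit of the Forster--Ramspott framework is that its axioms are purely sheaf-theoretic and bypass any such local triviality assumption, and the role of Lemma~\ref{19.1.16''} is to supply the fibrewise connectivity that is needed by the deformation axiom to compensate for its absence.
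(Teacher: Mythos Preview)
Your overall strategy---reduce to \cite[Satz 1]{FR1} by checking that $\bigl(\Cal O^{\mathrm{GCom\,}A},\widehat{\Cal C}^{\mathrm{GCom\,}A}\bigr)$ is an Oka pair---is exactly the paper's. But the verification you sketch diverges from the paper's and, in the deformation step, has a gap.

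The paper does not verify any ``approximation'' or ``deformation'' axiom by hand. Instead it makes a single algebraic observation: for any open $U$, if $h\in\Cal O^{\mathrm{Com\,}A}(U)$ then $e^h\in\Cal O^{\mathrm{GCom\,}A}(U)$, and conversely if $H\in\Cal O^{\mathrm{GCom\,}A}(U)$ with $\sup_U\Vert H-I\Vert<1$ then $\log H=\sum_{\mu\ge 1}(-1)^{\mu-1}(H-I)^\mu/\mu$ lies in $\Cal O^{\mathrm{Com\,}A}(U)$. This $\exp/\log$ relation is precisely what is meant by $\Cal O^{\mathrm{GCom\,}A}$ being a \emph{coherent $\Cal O$-subsheaf} of $\Cal O^{\mathrm{GL}(n,\C)}$ with generating Lie-algebra sheaf $\Cal O^{\mathrm{Com\,}A}$ in \cite[\S 2]{FR1}. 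Admissibility of the pair is then not verified here at all but simply quoted from \cite[\S 2.3, example b)]{FR1}, and condition (PH) is declared trivial. In particular, Lemma~\ref{19.1.16''} plays no role in this proof.

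Your deformation argument, by contrast, has a genuine hole. You reduce to contracting $g:=fh^{-1}$ (with $g(\xi)=I$) to $I$ inside $\widehat{\Cal C}^{\mathrm{GCom\,}A}$, and then write ``by Lemma~\ref{19.1.16''} the fibre $\mathrm{GCom\,}A(\xi)$ is path-connected; by a fibrewise choice of continuous paths \ldots''. But at $\xi$ there is nothing to connect, and for nearby $z$ Lemma~\ref{19.1.16''} only gives a path in each fibre separately; its explicit construction depends on avoiding the (varying) eigenvalues of $g(z)$ and is not evidently continuous in $z$. Nor have you checked that the intermediate sections remain in $\widehat{\Cal C}^{\mathrm{GCom\,}A}$ rather than merely in $\Cal C^{\mathrm{GCom\,}A}$. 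The clean fix is exactly the paper's $\exp/\log$ observation (or, equivalently, the linear homotopy $t\mapsto I+t(g-I)$ on a neighbourhood where $\Vert g-I\Vert<1$): since $\mathrm{Com\,}A(z)$ is a linear subspace containing $I$, these homotopies stay in $\mathrm{GCom\,}A(z)$ for all $z$ simultaneously, and applying the same formula to the holomorphic witness of $g$ at each point shows the intermediate sections lie in $\widehat{\Cal C}^{\mathrm{GCom\,}A}$. This is also the mechanism behind Lemma~\ref{25.6.16}.
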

Indeed, it is easy to see that, for each non-empty open $U\subseteq X$ we have: If $h\in \Cal O^{\mathrm{Com\,}A}(U)$, then $e^h\in\Cal O^{\mathrm{GCom\,}A}(U)$, and, if $ H\in\Cal O^{\mathrm{GCom\,}A}(U)$  with $\sup_{\zeta\in U}\Vert H(\zeta)-I\Vert<1 $, then
\[
\log H:=\sum_{\mu=1}^\infty (-1)^{\mu-1}\frac{(H-I)^\mu}{\mu}\;\in\; \Cal O^{\mathrm{Com\,}A}(U).
\]
This shows that $\Cal O^{\mathrm{GCom\,}A}$ is a {\em coherent $\Cal O$-subsheaf} of $\Cal O^{\mathrm{GL}(n,\C)}_X$ in the sense of \cite[\S 2]{FR1}, where $\Cal O^{\mathrm{Com\,}A}$ is the {\em generating sheaf of Lie algebras}.
Moreover, as observed in \cite[\S 2.3, example b)]{FR1}), the pair $\big(\Cal O^{\mathrm{GCom\,}A},\widehat{\Cal C}^{\mathrm{GCom\,}A}\big)$ is an {\em admissible pair} in the sense of \cite{FR1}, which, trivially, satisfies condition (PH) in Satz 1 of \cite{FR1}). Therefore the proposition follows from  that Satz 1. \qed

\begin{leer}\label{24.8.16*}{\em Proof of Theorem \ref{26.10.16}:} Since $A$ and $B$ are locally holomorphically similar at each point of $X$, we can find an open covering $\{U_i\}_{i\in I}$ of $X$ and holomorphic maps $H_i:U_i\to \mathrm{GL}(n,\C)$ such that
\begin{equation}\label{27.10.16}
B=H_i^{-1}AH_i^{}\quad\text{on}\quad U_i.
\end{equation}Then
$H_i^{-1}AH_i^{}=B=H_j^{-1}AH_j^{}$ on $U_i\cap U_j$. Hence $AH_i^{}H_j^{-1}=H_i^{}H_j^{-1}A$ on $U_i\cap U_j$, i.e.,
 the family
 \begin{equation}\label{24.8.16**}\{H_i^{}H^{-1}_j\}_{i,j\in I}
 \end{equation} is an $\Cal O^{\mathrm{GCom\,}A}$-cocycle.

Now, by hypothesis, we have a continuous map $C:X\to \mathrm{GL}(n,\C)$ satisfying conditions (a) and (b) in Theorem \ref{26.10.16}.  Set $c_i=H_iC$ on $U_i$.  We claim that
\begin{equation}\label{27.10.16'''}c_i\in\widehat{\Cal C}^{\mathrm{GCom\,}A}(U_i).
\end{equation}
 Indeed, let $\xi\in U_i$ be given. Then, by condition (b), we can find a neighborhood $V_\xi$ of $\xi$ and a holomorphic map $H_\xi:V_\xi\to \mathrm{GL}(n,\C)$ with
\begin{align}&\label{27.10.16'}B=H_\xi^{-1}AH_\xi\quad\text{on}\quad V_\xi,\quad\text{and}\\
&\label{27.10.16''}H_\xi(\xi)=C(\xi).
\end{align}Set $h=H_i^{}H_\xi^{-1}$ on $U_i\cap V_\xi$. Then  from \eqref{27.10.16'} and \eqref{27.10.16} it follows that
\[hAh^{-1}=H_i^{}H_\xi^{-1}AH_\xi^{}H_i^{-1}=H_i^{}BH_i^{-1}=A\quad\text{on}\quad U_i\cap V_\xi,
\]i.e., $h\in \Cal O^{\mathrm{GCom\,}A}(U_i\cap V_\xi)$, and from \eqref{27.10.16''} we see that
\[
h(\xi)=H_i^{}(\xi)C(\xi)^{-1}_{}=c_i^{}(\xi).
\]which proves \eqref{27.10.16'''}.

 Moreover
 \[
 c_i^{}c_j^{-1}=H_i^{}CC^{-1}_{}H_j^{-1}=H_i^{}H_j^{-1}\quad\text{on}\quad U_i\cap U_j.
 \]
Together with \eqref{27.10.16'''} this shows that the cocycle \eqref{24.8.16**} is  $\widehat{\Cal C}^{\mathrm{GCom\,}A}$-trivial. By Proposition \ref{28.6.16-}) this means that this cocycle is even  $\Cal O^{\mathrm{GCom\,}A}$-trivial, i.e., $H_i^{}H_j^{-1}=h_i^{}h_j^{-1}$ on $U_i\cap U_j$,
for some family $h_i\in \Cal O^{\mathrm{GCom\,}A}(U_i)$. Then $h_i^{-1}H_i^{}=h_j^{-1}H_j^{}$ on $U_i\cap U_j$. Hence, there is a well-defined  global holomorphic map $H:X\to \mathrm{GL}(n,\C)$ with
$H=H_i^{-1}h_i^{}$ on $U_i$,  and which satisfies, by \eqref{27.10.16}, $H^{-1}BH=h_i^{-1}H_i^{}BH_i^{-1}h_i^{}=h_i^{-1}Ah_i^{}=A$ on $X$.
\qed
\end{leer}

\section{Proof of Theorem \ref{18.10.16}{}}\label{28.10.16'}

\begin{lem}\label{25.6.16} Let $X$ be a complex space, $A:X\to \mathrm{Mat}(n\times n,\C)$ a holomorphic map,  $\Lambda\subseteq X$ a finite set,  $U$ a neighborhood of $\Lambda$ and $f\in \widehat{\Cal C}^{\mathrm{GCom\,}A}(U)$ {\em(Def. \ref{24.8.16''})}. Then there exists a neighborhood $W_1\subseteq U$ of $\Lambda$ such that, for each neighborhood $W_2$ of $\Lambda$ with $\overline W_2\subseteq W_1$, there is a map
$\widetilde f\in \widehat{\Cal C}^{\mathrm{GCom}(A)}(X)$ with $\widetilde f=f$ on $W_2$ and $\widetilde f=I$ on $X\setminus W_1$. \iffalse the following properties
\begin{itemize}
\item[(i)] $\widetilde f=f$ on $K$;
\item[(ii)]  $\widetilde f=I$ on $X\setminus V$;
\item[(iii)] there exists a continuous map $H:[0,1]\times X\to \mathrm{GL}(n,\C)$ such that
\begin{align*} &H(t,\cdot)\in \Cal C^{\mathrm{GCom}(A)}(X)\quad\text{for all}\quad 0\le t\le 1,\\
& H(t,\zeta)=I\quad \text{for all}\quad 0\le t\le 1\quad\text{and}\quad\zeta\in X\setminus U,\\
&H(0,\cdot)=\widetilde f\quad\text{and}\quad H(1,\zeta)=I\quad\text{for all}\quad \zeta\in X.
\end{align*}
\end{itemize}\fi
\end{lem}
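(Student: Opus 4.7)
Since $\Lambda$ is finite and $X$ is locally compact Hausdorff, choose pairwise disjoint open neighborhoods $V_\xi\subseteq U$ of the points $\xi\in\Lambda$ with pairwise disjoint compact closures. For each $\xi$, the hypothesis $f\in\widehat{\Cal C}^{\mathrm{GCom\,}A}(U)$ yields, after shrinking $V_\xi$, a holomorphic $H_\xi\in\Cal O^{\mathrm{GCom\,}A}(V_\xi)$ with $H_\xi(\xi)=f(\xi)$; invertibility on a whole neighborhood follows from $\det H_\xi(\xi)\neq 0$. Set $g_\xi:=H_\xi^{-1}f$ on $V_\xi$. This is a continuous section of $\mathrm{GCom\,}A$ with $g_\xi(\xi)=I$, and $g_\xi\in\widehat{\Cal C}^{\mathrm{GCom\,}A}(V_\xi)$, because multiplying any local holomorphic $\mathrm{Com\,}A$-matcher of $f$ at $z\in V_\xi$ by the holomorphic $H_\xi^{-1}$ produces a local matcher of $g_\xi$ at $z$.

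Shrink each $V_\xi$ further so that simultaneously $\|g_\xi(z)-I\|<1/2$ on $V_\xi$ and the spectrum of $H_\xi(z)$ stays in a simply connected subset of $\C\setminus\{0\}$ on which some branch of $\log$ is holomorphic. The first condition makes $k_\xi:=\log g_\xi$ a well-defined continuous map $V_\xi\to\mathrm{Mat}(n\times n,\C)$; since $\mathrm{Com\,}A(z)$ is a subalgebra of $\mathrm{Mat}(n\times n,\C)$ stable under the logarithmic series, $k_\xi$ takes values in $\mathrm{Com\,}A$, lies in $\widehat{\Cal C}^{\mathrm{Com\,}A}(V_\xi)$, and satisfies $k_\xi(\xi)=0$. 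The second condition lets the holomorphic functional calculus produce $L_\xi\in\Cal O^{\mathrm{Com\,}A}(V_\xi)$ with $\exp L_\xi=H_\xi$; the value $L_\xi(z)$ lies in $\mathrm{Com\,}A(z)$ because it is obtained from $H_\xi(z)\in\mathrm{Com\,}A(z)$ by a Cauchy integral along a contour enclosing the spectrum. This extraction of the holomorphic logarithm of $H_\xi$ inside $\mathrm{Com\,}A$ is the one non-routine step.

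Set $W_1:=\bigcup_{\xi\in\Lambda}V_\xi$. For any $W_2$ with $\overline{W_2}\subseteq W_1$, the disjointness of the $\overline{V_\xi}$ together with the normality of $X$ yield continuous cutoffs $\chi_\xi:X\to[0,1]$ supported in $V_\xi$ with $\chi_\xi\equiv 1$ on $\overline{W_2}\cap V_\xi$. Define
\[
\widetilde f(z):=\begin{cases}\exp\bigl(\chi_\xi(z)L_\xi(z)\bigr)\,\exp\bigl(\chi_\xi(z)k_\xi(z)\bigr) & \text{if } z\in V_\xi, \\ I & \text{if } z\in X\setminus\bigcup_\xi V_\xi.\end{cases}
\]
On $W_2$ both exponents carry $\chi_\xi=1$, so $\widetilde f=\exp L_\xi\,\exp k_\xi=H_\xi g_\xi=f$; outside $W_1$, $\widetilde f=I$ by definition; and the two pieces match continuously across $\partial V_\xi$ since $\chi_\xi$ vanishes near the boundary of its support. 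The membership $\widetilde f\in\widehat{\Cal C}^{\mathrm{GCom\,}A}(X)$ is checked pointwise: at $z\notin\bigcup V_\xi$ take the trivial matcher $h\equiv I$; at $z\in V_\xi$ take $h(\zeta):=\exp\bigl(\chi_\xi(z)L_\xi(\zeta)\bigr)\exp\bigl(\chi_\xi(z)k'(\zeta)\bigr)$, where $k'\in\Cal O^{\mathrm{Com\,}A}$ is any local holomorphic matcher of $k_\xi$ at $z$. Because $\chi_\xi(z)$ is now a fixed scalar, $h$ is holomorphic and $\mathrm{Com\,}A$-valued in $\zeta$, with $h(z)=\widetilde f(z)$, completing the verification.
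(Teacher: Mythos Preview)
Your argument is correct and takes a genuinely different route from the paper.  The paper never uses the holomorphic matcher $H_\xi$ of $f$ at $\xi$ at all; instead it builds, for $\zeta$ near $\xi$, an explicit continuous path in $\mathrm{GCom\,}A(\zeta)$ from $f(\zeta)$ to $I$ (the same construction as in the connectedness Lemma~\ref{19.1.16''}), subdivides the path into finitely many steps $g_1,\dots,g_m$ each with $\|g_j-I\|<1$, and then interpolates each factor via a cutoff: $\widetilde f=\prod_j\bigl(I+\chi(g_j-I)\bigr)$.  You, by contrast, exploit the $\widehat{\Cal C}$-hypothesis directly, writing $f=H_\xi\cdot g_\xi$ with $H_\xi$ holomorphic and $g_\xi(\xi)=I$, passing to logarithms $L_\xi=\log H_\xi$ and $k_\xi=\log g_\xi$ (the first via the holomorphic functional calculus on a slit plane containing $\sigma(f(\xi))$, the second via the power series), and interpolating the logarithms with the cutoff.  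Your approach is slightly slicker once the functional-calculus logarithm is in hand, and makes transparent the verification of the $\widehat{\Cal C}$-condition for $\widetilde f$ (freeze $\chi_\xi(z)$ and keep $L_\xi,\,k'$ holomorphic); the paper's approach is more elementary in that it only ever needs $\log$ of matrices within $\|{\cdot}-I\|<1$, at the price of the subdivision step.  Two cosmetic points: your phrase ``$\chi_\xi$ vanishes near the boundary of its support'' should read ``near $\partial V_\xi$'' (i.e.\ outside a closed subset of $V_\xi$), and to obtain such a $\chi_\xi$ one first squeezes $\overline{W_2}\cap V_\xi$ into an intermediate open set $V_\xi'\Subset V_\xi$ before applying Urysohn; both are routine.
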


\begin{proof} We may assume that $\Lambda$ consist only of one point, $\xi$. Choose  a neighborhood $V$ of $\xi$ so small that $\overline{V}$ is compact and contained in $U$, and set
\[
\alpha=1+\max_{\zeta\in \overline{V}}\Vert f(\zeta)\Vert.
\]Then $-\alpha$ is not an eigenvalue of $f(\xi)$. Moreover, $0$ is not an eigenvalue of $f(\xi)$ (as $f(\xi) $ is invertible).  Therefore, we can find a continuous function $\lambda:[-1,0]\to \C$ such that $\lambda(-1)=0$, $\lambda(0)= \alpha$ and $f(\xi)+\lambda(t)I\in \mathrm{GL}(n,\C)$ for all $-1\le t\le 0$. Since $f$ is continuous,  we can choose a neighborhood $W_1\subseteq V$ of $\xi$ so small that $\overline W_1\subseteq V$ and
\[
f(\zeta)+\lambda(t)I\in \mathrm{GL}(n,\C)\quad\text{for all}\quad -1\le t\le 0\text{ and  }\zeta\in \overline{W_1}.
\]Then, setting, for  $\zeta\in\overline W_1$,
\[
g(t,\zeta)=\begin{cases}f(\zeta)+\lambda(t)I\quad&\text{if}\quad -1\le t\le 0,\\
\Big(1-t+\frac{t}{\alpha}\Big)\Big(f(\zeta)+\alpha I\Big)&\text{if}\quad 0\le t\le 1,
\end{cases}
\]we obtain a continuous map  $g:[-1,1]\times \overline{W_1}\to \mathrm{GL}(n,\C)$ (recall that, by definition,  $\alpha\ge 1$ and therefore $1-t+\frac{t}{\alpha}\not=0$ for $0\le t\le 1$) such that
\begin{align*}&g(-1,\zeta)=f(\zeta)\quad\text{and}\quad g(1,\zeta)=\frac{1}{\alpha}f(\zeta)+I\quad\text{for all}\quad \zeta\in\overline{W_1},\text{ and},\\
& g(t,\cdot)\big\vert_{W_1}\in \widehat{\Cal C}^{\mathrm{GCom}(A)}(W_1)\quad\text{for all}\quad -1\le t\le 1.
\end{align*} Moreover, it follows from the definition of $\alpha$ that
\begin{equation*}\Vert g(1,\zeta)-I\Vert<1\quad\text{for all}\quad\zeta\in \overline{W_1}.
\end{equation*}
Choose  $-1=t_1<t_2<\ldots<t_m=1$ such that
\[
\big\Vert g(t_j,\zeta)g(t_{j+1},\zeta)^{-1}-I\big\Vert<1\quad\text{for all}\quad \zeta\in \overline{W_1}\text{ and }1\le j\le m-1,
\]and define, for $\zeta\in W_1$,
\[g_m^{}(\zeta)=g(1,\zeta)\quad\text{and}\quad
g_j^{}(\zeta)=g(t_j,\zeta)g(t_{j+1},\zeta)^{-1} \quad\text{if}\quad 1\le j\le m-1.\\
\]Then $g_j\in \widehat{\Cal C}^{\mathrm{GCom}(A)}(W_1)$ and $\Vert g_j-I\Vert<1$ on $W_1$, for  $1\le j\le m$, and
\[
f=g_1\cdot\ldots \cdot g_m\quad\text{on}\quad W_1.
\]
Now let  a neighborhood $W_2$ of $\xi$ with $\overline W_2\subseteq W_1$  be given. Choose a continuous function $\chi:X\to [0,1]$ with $\chi=1$ on $W_2$ and $\chi=0$ on $X\setminus W_1$. Then
\[\widetilde f(\zeta):=\begin{cases}\Big(I+\chi(\zeta)\big(g_1(\zeta)-I\big)\Big)\cdot\ldots\cdot \Big(I+\chi(\zeta)\big(g_m(\zeta)-I\big)\Big)\quad&\text{if}\quad\zeta\in W_1,\\
I&\text{if}\quad \zeta\in X\setminus W_1,
\end{cases}
\]has the required properties
\end{proof}

\begin{lem}\label{17.10.16--} Let $X$ be a one-dimensional Stein space, and $A:X\to \mathrm{Mat}(n\times n,\C)$ a holomorphic map. Then  $H^1\big(X,\Cal O^{\mathrm{GCom\,}A}\big)=0$.
\end{lem}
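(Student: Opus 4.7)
My plan is to invoke the Oka principle of Proposition \ref{28.6.16-} to reduce to a continuous problem, and then to solve the continuous problem by an inductive exhaustion based on Lemma \ref{25.6.16} and the one-dimensionality of $X$.

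\textbf{Reduction.} By Proposition \ref{28.6.16-}, the vanishing $H^1(X,\mathcal{O}^{\mathrm{GCom}\,A})=0$ follows once every $\mathcal{O}^{\mathrm{GCom}\,A}$-cocycle is shown to be $\widehat{\mathcal{C}}^{\mathrm{GCom}\,A}$-trivial. So fix an open cover $\{U_i\}_{i\in I}$ of $X$ and an $\mathcal{O}^{\mathrm{GCom}\,A}$-cocycle $\{f_{ij}\}$; the task becomes to produce $h_i\in \widehat{\mathcal{C}}^{\mathrm{GCom}\,A}(U_i)$ with $f_{ij}=h_i h_j^{-1}$ on $U_i\cap U_j$. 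After passing to a locally finite refinement by relatively compact Stein sets, I exhaust $X$ by compact subsets $K_0\subset K_1\subset \cdots$ with $K_{\nu+1}$ obtained from $K_\nu$ by gluing (the closure of) a single $1$-cell along a finite set $\Lambda_\nu\subset \partial K_\nu$. Such an exhaustion exists because $X$ is a one-dimensional Stein space---for instance, via a handle decomposition, or by taking sublevel sets of a strictly plurisubharmonic exhaustion function whose critical values are separated and whose critical points yield only finitely many transitions at a time.

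\textbf{Induction.} Assuming $h_i^{(\nu)}$ trivializes the cocycle on a neighborhood of $K_\nu$, I solve the cocycle holomorphically on a small Stein neighborhood $V$ of the new $1$-cell, obtaining local trivializations $g_i$ on $V\cap U_i$. On the overlap, the discrepancy $\phi_i:=h_i^{(\nu)} g_i^{-1}$ is a $\widehat{\mathcal{C}}^{\mathrm{GCom}\,A}$-section defined on a neighborhood of $\Lambda_\nu$. Lemma \ref{25.6.16} applied to $\Lambda_\nu$ yields a global extension $\widetilde\phi_i\in \widehat{\mathcal{C}}^{\mathrm{GCom}\,A}(X)$ which equals $\phi_i$ near $\Lambda_\nu$ and equals $I$ outside a small neighborhood of $\Lambda_\nu$. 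Setting $h_i^{(\nu+1)}:=\widetilde\phi_i\cdot g_i$ on $V$ and $h_i^{(\nu+1)}:=h_i^{(\nu)}$ on a slightly shrunken copy of $K_\nu$ produces trivializations extending $h_i^{(\nu)}$; the limit $h_i:=\lim_\nu h_i^{(\nu)}$ is then a global $\widehat{\mathcal{C}}^{\mathrm{GCom}\,A}$-trivialization.

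\textbf{Main obstacle.} The hardest part is the local solvability of the cocycle on the Stein neighborhood $V$ needed to produce the $g_i$. This is delicate because $\mathcal{O}^{\mathrm{GCom}\,A}$ is a non-abelian sheaf with varying fibers (Example \ref{24.8.16'} shows that $\mathrm{GCom}\,A$ need not be locally trivial as a bundle of groups). I expect to handle this by shrinking $V$ to a sufficiently small, contractible Stein piece and exploiting the coherence of $\mathcal{O}^{\mathrm{Com}\,A}$: either reducing to the additive sheaf via the exponential map (using $H^1(V,\mathcal{O}^{\mathrm{Com}\,A})=0$ by Cartan B), or exploiting the connectedness of the fibers $\mathrm{GCom}\,A(z)$ from Lemma \ref{19.1.16''} together with Cartan A to interpolate holomorphic sections with prescribed values. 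A secondary technical issue is arranging the exhaustion so that each attaching set $\Lambda_\nu$ is genuinely finite---here the one-dimensionality of $X$ is essential, and without it the hypothesis of Lemma \ref{25.6.16} would not be available.
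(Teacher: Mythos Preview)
Your approach differs substantially from the paper's, and the place you yourself flag as the ``main obstacle'' is a genuine gap.

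The paper does \emph{not} exhaust $X$ and glue across $1$-cells. Instead it passes to the normalization $\pi:\widetilde X\to X$, whose connected components are noncompact Riemann surfaces. There the pulled-back cocycle $\widetilde f_{ij}=f_{ij}\circ\pi$ is trivialized as a $\mathrm{GL}(n,\C)$-cocycle by Grauert's theorem, $\widetilde f_{ij}=\widetilde h_i\widetilde h_j^{-1}$. These $\widetilde h_i$ need not lie in $\mathrm{GCom}\,\widetilde A$, but $\widetilde B:=\widetilde h_i^{-1}\widetilde A\widetilde h_i$ is then globally well defined and locally holomorphically similar to $\widetilde A$; at this point \emph{Guralnick's Theorem~\ref{24.10.16+}} supplies a global $\widetilde H$ with $\widetilde H\widetilde B\widetilde H^{-1}=\widetilde A$, so that $\widetilde H\widetilde h_i^{-1}\in\mathcal O^{\mathrm{GCom}\,\widetilde A}(\widetilde U_i)$. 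Lemma~\ref{25.6.16} is used only to modify these sections near the finite fibers $\pi^{-1}(\xi)$ over the singular points, so that the resulting $\widehat{\mathcal C}^{\mathrm{GCom}\,\widetilde A}$-trivialization descends to $X$; Proposition~\ref{28.6.16-} then finishes.

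Your scheme, by contrast, requires trivializing the $\mathcal O^{\mathrm{GCom}\,A}$-cocycle holomorphically on a Stein neighbourhood $V$ of each new $1$-cell. If $V$ meets several $U_i$, this is exactly the kind of nonabelian $H^1$ vanishing the lemma asserts, just on a smaller piece, and the tools you list do not suffice: Cartan~B for $\mathcal O^{\mathrm{Com}\,A}$ gives only additive vanishing, and the exponential passage applies only to cocycles close to $I$; connectedness of the fibers (Lemma~\ref{19.1.16''}) gives no mechanism to reduce an arbitrary cocycle to that regime, especially since the fibers vary discontinuously (Example~\ref{24.8.16'}). In the paper's argument this is precisely the difficulty that Guralnick's theorem absorbs on the smooth normalization: it is what converts a $\mathrm{GL}(n,\C)$-trivialization (which Grauert gives for free) into one compatible with $\mathrm{GCom}\,\widetilde A$. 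Without an analogue of that step, your induction does not start. A secondary issue is that your handle decomposition with finite attaching sets is asserted for a possibly singular $X$; the paper sidesteps this too by working on the normalization and treating the (discrete) singular locus separately.
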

\begin{proof}
Let an $\Cal O^{\mathrm{GCom\,}A}$-cocycle $\{f_{ij}\}_{i,j\in I}$ with the covering  $\Cal U=\{ U_i\}_{i\in I}$ be given. We have to prove that this cocycle is $\Cal O^{\mathrm{GCom\,}A}$-trivial.

Denote by $S$ the set of non-smooth points of $X$.
Since $X$ is one-dimensional, $S$ is discrete and closed in $X$. It follows that $X$ admits arbitrary fine open coverings such that  each point of $S$ is contained in precisely one of the sets of the covering. Therefore, by Proposition \ref{17.12.15--},  we may assume that
\begin{equation}\label{25.6.16m'}
 S\cap U_i\cap U_j=\emptyset\quad\text{for all}\quad i,j\in I\text{ with }i\not=j,
\end{equation}
which implies  that, for each $\xi\in S$, there is precisely one index in $I$, $\tau(\xi)$, such that
$\xi\in U_{\tau(\xi)}$, and $\xi\not\in U_i$ if $i\not=\tau(\xi)$. Shrinking the sets $U_i$ with $i\in I\setminus \tau(S)$, we can moreover achieve that, for each $\xi\in S$, there is a neighborhood $V_\xi$ of $\xi$ with
\begin{align}&\label{25.6.16m''}
 V_\xi\subseteq U_{\tau(\xi)},\quad\text{and}\\
&\label{26.6.16+}V_\xi\cap U_i=\emptyset\quad \text{if}\quad i\not=\tau(\xi).
\end{align}

Now let $\pi:\widetilde X\to X$ be the normalization of $X$ (see, e.g., \cite[Ch. VI, \S 4]{L}).  Since, for each  $\xi\in S$, $\pi^{-1}(\xi)$ is finite and $S$ is discrete and closed in $X$, then
\[\widetilde S:=\pi^{-1}(S)
\]is discrete and closed in $\widetilde X$. Further let
$\widetilde A:=A\circ \pi$, $\widetilde U_i:=\pi^{-1}(U_i)$, and $\{\widetilde f_{ij}\}$ the $\Cal O^{\mathrm{GCom\,}\widetilde A}$-cocycle with the covering $\{\widetilde U_i\}_{i\in I}$ defined by
\[\widetilde f_{ij}:=f_{ij}\circ \pi\quad \text{on}\quad \widetilde U_i\cap \widetilde U_j.
\]
 The connected components of $\widetilde X$ are the normalizations of the irreducible components of $X$ (see, e.g., \cite[Ch. VI, \S 4.2]{L}).
 Since $X$ is one-dimensional, this implies by the Puiseux theorem (see, e.g., \cite[Ch. VI, \S 4.1]{L})  that the connected components of $\widetilde X$ are Riemann surfaces. Since $X$ is Stein and therefore non of the irreducible components of $X$ is compact, it follows that  each of the connected components of $\widetilde X$ is a non-compact connected Riemann surface.
Therefore, by Grauert's theorem \cite[Satz 7]{Gr} (see also \cite[Theorem 30.4]{F} or \cite[Theorem 5.3.1]{Fc}), we have $H^1\big(\widetilde X,\Cal O^{\mathrm{GL}(n,\C)})=0$. In particular,
\begin{equation}\label{25.6.16m'''}\widetilde f_{ij}^{}=\widetilde h_i^{}\widetilde h_j^{-1}\quad\text{on}\quad \widetilde U_i\cap \widetilde U_j
\end{equation}for some family $\widetilde h_i\in\Cal O^{\mathrm{GL}(n,\C)}(U_i)$.
Since $\widetilde f_{ij}^{}\widetilde A=\widetilde A\widetilde f_{ij}^{}$, it follows that
$\widetilde  h_i^{-1}\widetilde A\widetilde  h_i^{}=\widetilde h_j^{-1}\widetilde A h_j^{}$ on $\widetilde U_i\cap\widetilde  U_j$.
Hence, there is a well-defined global holomorphic map $\widetilde  B:\widetilde X\to \mathrm{Mat}(n\times n,\C)$ with
\begin{equation}\label{27.6.16'}
\widetilde  B=\widetilde h_i^{-1}\widetilde A\widetilde  h_i^{}\quad \text{on}\quad \widetilde U_i.
\end{equation}Then, by definition of $\widetilde B$,  $\widetilde  B$ and $\widetilde A$ are locally holomorphically similar on $\widetilde X$. Therefore, by Guralnick's result (Theorem \ref{24.10.16+} above), we can find a holomorphic $\widetilde  H:\widetilde X\to \mathrm{GL}(n,\C)$ with
\begin{equation}\label{27.6.16''}
\widetilde H\widetilde  B\widetilde H^{-1}=\widetilde A\quad\text{on}\quad \widetilde X.
\end{equation}Then, by \eqref{27.6.16'} and \eqref{27.6.16''}, $\widetilde H\widetilde h_{i}^{-1}\widetilde A \widetilde h_i^{}\widetilde H^{-1}_{}=\widetilde H\widetilde B\widetilde A=\widetilde A$ on $\widetilde U_i$, i.e.,
\begin{equation}\label{27.6.16'''}
\widetilde H\widetilde h_{i}^{-1}\in \Cal O^{\mathrm{GCom\,}\widetilde A}(\widetilde U_i).
\end{equation}
By \eqref{25.6.16m''}, for each $\xi\in S$, $\pi^{-1}(V_\xi)\subseteq \widetilde U_{\tau(\xi)}$. Since $\pi^{-1}(V_\xi)$ is a neighborhood of the finite set $\pi^{-1}(\xi)$ and since, by \eqref{27.6.16'''}, $\widetilde H \widetilde h^{-1}_{\tau(\xi)}\in \Cal O^{\mathrm{GCom\,}\widetilde A}\big(\widetilde U_{\tau(\xi)}\big)$ if $\xi\in S$, this implies by Lemma \ref{25.6.16}, that, for each $\xi\in S$, there exist neighborhoods $ \widetilde W_1\big(\pi^{-1}(\xi)\big)$ and $\widetilde W_2\big(\pi^{-1}(\xi)\big)$ of $\pi^{-1}(\xi)$ and a map
\begin{equation}\label{22.10.16''}\widetilde C_{\xi}\in \widehat{\Cal C}^{\mathrm{GCom\,}\widetilde A}(\widetilde X)
\end{equation} such that
\begin{align}&\label{22.10.16}\widetilde W_1\big(\pi^{-1}(\xi)\big)\subseteq \pi^{-1}(V_\xi),\\
&\label{26.6.16}\overline{\widetilde W_2\big(\pi^{-1}(\xi)\big)}\subseteq \widetilde W_1\big(\pi^{-1}(\xi)\big),\\
&\label{26.6.16'}\widetilde C_\xi=\widetilde H\widetilde h_{\tau(\xi)}^{-1}\quad \text{on}\quad  \widetilde W_2\big(\pi^{-1}(\xi)\big),\\
&\label{26.6.16''}\widetilde C_\xi=I\quad \text{on}\quad \widetilde X\setminus \widetilde W_1\big(\pi^{-1}(\xi)\big).
\end{align}
Set
\[
 \widetilde W_1=\bigcup_{\xi\in S}\widetilde W_1\big(\pi^{-1}(\xi)\big)\quad\text{and}\quad \widetilde W_2=\bigcup_{\xi\in S}  \widetilde W_2\big(\pi^{-1}(\xi)\big).
\]By  \eqref{26.6.16+} and \eqref{25.6.16m''}, $V_\xi\cap V_\eta=\emptyset$ and, hence,
$\pi^{-1}(V_\xi)\cap \pi^{-1}(V_\eta)=\emptyset$ if $\xi,\eta\in S$ with $\xi\not=\eta$. By \eqref{22.10.16} this implies that $\widetilde W_1\big(\pi^{-1}(\xi)\big)\cap \widetilde W_1\big(\pi^{-1}(\eta)\big)=\emptyset$ if $\xi,\eta\in S$ with $\xi\not=\eta$.
Therefore and by \eqref{22.10.16''} and \eqref{26.6.16''}, there is a well-defined map
\begin{equation}\label{22.10.16'''}\widetilde C\in \widehat{\Cal C}^{\mathrm{GCom\,}\widetilde A}
(\widetilde X)
\end{equation} with
\begin{align}&\label{23.10.16}\widetilde C =\widetilde C_\xi\quad\text{on}\quad \widetilde W_2\big(\pi^{-1}(\xi)\big),\quad\text{for each}\quad\xi\in S, \quad\text{and}\\
&\label{23.10.16'}\widetilde C=I\quad\text{on}\quad\text{on}\quad \widetilde X\setminus \widetilde W_1.
\end{align}
Define $\widetilde c_i=\widetilde h_i\widetilde H^{-1}\widetilde C$ on $\widetilde U_i$.
  Then, by \eqref{22.10.16'''} and \eqref{27.6.16'''},
 \begin{equation}\label{22.10.16--}
\widetilde c_i\in \widehat{\Cal C}^{\mathrm{GCom\,}\widetilde A}(\widetilde U_i)\quad\text{for all}\quad i\in I,
\end{equation} and, by \eqref{25.6.16m'''},
\begin{equation}\label{22.10.16-}
\widetilde f_{ij}^{}=\widetilde c_i^{}\widetilde c_j^{-1}\quad \text{on}\quad \widetilde U_i\cap \widetilde U_j.
\end{equation}

Now it remains to find maps $c_i\in \widehat{\Cal C}^{\mathrm{GCom\,}A}(U_i)$, $i\in I$, with
\begin{equation}\label{23.10.16+}
\widetilde c_i=c_i\circ \pi\quad\text{on}\quad U_i.
\end{equation}Indeed,  since $\pi$ is biholomorphic from $\widetilde X\setminus \widetilde S$ onto $X\setminus S$, then it follows from \eqref{23.10.16+} and \eqref{22.10.16-} that $f_{ij}^{}=c_i^{}c_j^{-1}$ on $U_i\cap U_j$, i.e., $\{f_{ij}\}$ is $\widehat{\Cal C}^{\mathrm{GCom\,}A}$-trivial, which implies by Proposition \ref{28.6.16-} that $\{f_{ij}\}$ is $\Cal O^{\mathrm{GCom\,}A}$-trivial.

If $i\in I\setminus\tau(S)$, then $U_i\subseteq X\setminus S$, by \eqref{26.6.16+}. Therefore, since $\pi$ is biholomorphic from $\widetilde X\setminus\widetilde S$ onto $X\setminus S$, then we can define $c_i=\widetilde c_i\circ\pi^{-1}$.

Let $\xi\in S$. Denote by $X_\xi$ the set of  germs of $X$ at $\xi$. By
Puiseux's theorem (see, e.g., \cite[Ch. VI, \S 4.1]{L}), for each $\widetilde \xi\in\pi^{-1}(\xi)$, $\pi$ is homeomorphic from some neighborhood of $\widetilde \xi$ onto a representative of one of the germs from $X_\xi$. This implies that there is a neighborhood  of $\xi$ in $X$, $W_2(\xi)$, with $\pi^{-1}\big(W_2(\xi)\big)\subseteq \widetilde W_2\big(\pi^{-1}(\xi)\big)$. Therefore,  it
 follows from  \eqref{23.10.16} and \eqref{26.6.16'} that
\begin{equation}\label{23.10.16-}
\widetilde c_{\tau(\xi)}^{}=\widetilde h_{\tau(\xi)} \widetilde H_{}^{-1}\widetilde C=\widetilde h_{\tau(\xi)} \widetilde H_{}^{-1}\widetilde C_\xi^{}=I\quad\text{on}\quad \pi^{-1}\big(W_2(\xi)\big)\quad\text{for all}\quad\xi\in S.
\end{equation}
By  \eqref{26.6.16+}, $S\cap U_{\tau(\xi)}=\{\xi\}$. Therefore, $\pi$ is biholomorphic from $\widetilde U_{\tau(\xi)}\setminus \pi^{-1}(\xi)$ onto $U_{\tau(\xi)}\setminus \{\xi\}$. By \eqref{22.10.16--} and \eqref{23.10.16-} this implies that there is a  well-defined map $c_{\tau(\xi)}^{}\in\widehat{\Cal C}^{\mathrm{GCom\,} A} (U_{\tau(\xi)})$ with $\widetilde c_{\tau(\xi)}^{}= c_{\tau(\xi)}^{}\circ \pi$ on $\widetilde U_{\tau(\xi)}$, i.e., we have \eqref{23.10.16+} for $i=\tau(\xi)$.
\end{proof}

\begin{rem}\label{3.9.16**} Lemma \ref{17.10.16--} contains the statement $H^1(X,\Cal O^{\mathrm{GCom\,}\Phi})=0$, for each matrix $\Phi\in \mathrm{Mat}(n\times n,\C)$ and each one-dimensional Stein space $X$.  Since
$\mathrm{GCom\,}(\Phi)$ is connected (Lemma \ref{19.1.16''}), this is a special case of the statement
\begin{equation}\label{27.8.16'}
H^1(X,\Cal O^G)=0,
\end{equation}for each connected complex Lie group $G$ and each one-dimensional Stein space $X$.
If $X$ is smooth, \eqref{27.8.16'} was proved by H. Grauert \cite[Satz 7]{Gr}.

For non-smooth $X$, surprisingly, it seems that there is no explicit  reference for \eqref{27.8.16'} in the literature, except for $G=\mathrm{GL}(n,\C)$, see \cite[Theorem 7.3.1 (c) or Corollary 7.3.2, 1.)]{Fc}. Therefore I asked colleagues and got two answers.

F. Forstneri\v c answered  that, by \cite{H}, each one-dimensional Stein space has the homotopy type of a one-dimensional CW complex and, therefore,
\begin{equation}\label{27.8.16''}H^1(X,\Cal C^G)=0, \quad\footnote{Indeed, let $f$ be a $\Cal C_X^G$ cocycle, and let $B$ be the principal $G$-bundle defined by $f$. Then (by definition of $B$) the $\Cal C_X^G$-triviality of $f$ (which we have to prove) is equivalent to the existence of a global continuous section of $B$, and the existence of such a global continuous section follows, e.g., from  \cite[Theorem 11.5 and \S 29.1]{St}.}
\end{equation} which then implies \eqref{27.8.16'} by Grauert's Oka priciple \cite[Satz I]{Gr} (see also \cite[7.2.1]{Fc}).

J. Ruppenthal proposed to pass to the normalization of $X$,  which is smooth. At least if $X$ is locally and globally irreducible and, hence, homeomorphic to its normalization, this immediately reduces the topological statement \eqref{27.8.16''} to the smooth case, which then  implies \eqref{27.8.16'}, again by Grauert's Oka principle. This idea is used  in the proof of Lemma \ref{17.10.16--} above.
\end{rem}

{\em Proof of Theorem \ref{18.10.16}.}
Since $A$ and $B$ are locally holomorphically similar at each point of $X$,  we can find an open covering $\{U_i\}_{i\in I}$ of $X$ and holomorphic  maps $H_i:U_i\to\mathrm{GL}(n,\C)$, $i\in I$, such that
\begin{equation}\label{28.6.16}B=H_i^{-1}AH_i^{}\quad\text{on}\quad U_i.
\end{equation} It follows that $AH_i^{}H_j^{-1}=H_i^{}H_j^{-1}A$ on $U_i\cap U_j$. Hence, $\{H_i^{}H_j^{-1}\}^{}_{i,j\in I}$ is an  $\Cal O^{\mathrm{GCom\,}A}$-cocycle. By Lemma \ref{17.10.16--}, this cocycle is $\Cal O^{\mathrm{GCom\,}A}$-trivial, i.e., $H_i^{}H_j^{-1}=h_i^{}h_j^{-1}$ on $U_i\cap U_j$, for some family $h_i\in\Cal O^{\mathrm{GCom\,}A}(U_i)$. Hence $h_i^{-1}H_i^{}=h_j^{-1}H_j^{}$ on $U_i\cap U_j$, which means that  there is a well-defined  map $H\in\Cal O^{\mathrm{GCom\,}A}(X)$ with $H=i_i^{-1}H_i^{}$ on $U_i$.
 From  \eqref{28.6.16} and the relations $H_i^{}AH_i^{-1}=A$ it follows  that $B=H^{-1}AH$ on $X$.
 \qed

\section{Proof of Theorem \ref{28.6.16**}{}}\label{local}

We show that the statements of this theorem are known or easily  follow from known results.
First we collect these known results.

We begin with following deep result of K. Spallek, which is a special case of  \cite[Satz 5.4]{Sp1} (see also the beginning of \cite{Sp2}).

\begin{prop}\label{8.12.15'}Let $X$ be a complex space, $M:X\to \mathrm{Mat}(n\times m,\C)$  holomorphic, and $\xi\in X$. Then there exists
$k\in \N$ (depending on $M$ and $\xi$) such that the following holds.

Suppose $U$ is a neighborhood of $\xi$ and $f:U\to \C^m$ is  a $\Cal C^k$ map with
$Mf=0$ on $U$.
Then there exist a neighborhood $V\subset U$ of $\xi$ and a holomorpic map  $h:V\to \C^m$ with $Mh=0$ on $V$ and $h(\xi)=f(\xi)$.
\end{prop}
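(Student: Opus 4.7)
My approach is to view the statement as a question about the coherent kernel sheaf of $M$, reduce it, via Cartan's Theorem~A, to a purely fiber-wise span condition at $\xi$, and then settle that condition by a $\Cal C^k$-division theorem.

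First, set $\Cal K:=\ke\bigl(M\cdot:\Cal O_X^m\to\Cal O_X^n\bigr)$; this is a coherent analytic subsheaf of $\Cal O_X^m$ by Oka's theorem. Choose a Stein open neighborhood $V_0\subseteq U$ of $\xi$ (such neighborhoods exist for any complex space). By Cartan's Theorem~A, there exist finitely many holomorphic maps $h_1,\ldots,h_N:V_0\to\C^m$ with $Mh_j=0$ which generate $\Cal K$ as an $\Cal O_{V_0}$-module.

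Second, I would reduce the theorem to the single claim
\[
f(\xi)\in\operatorname{span}_{\C}\bigl\{h_1(\xi),\ldots,h_N(\xi)\bigr\}.
\]
Indeed, once this is known, pick constants $\lambda_1,\ldots,\lambda_N\in\C$ with $\sum_j\lambda_j h_j(\xi)=f(\xi)$; then $h:=\sum_j\lambda_j h_j$ is holomorphic on $V_0$, satisfies $Mh=0$ on $V_0$, and fulfils $h(\xi)=f(\xi)$. Shrinking $V_0$ if necessary produces the required neighborhood $V$ and the required holomorphic map $h$. This step is soft and uses nothing beyond linear algebra.

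Third, the crux is to produce the span condition from the hypothesis that $f$ is $\Cal C^k$ and $Mf=0$. My strategy is to obtain, on a neighborhood of $\xi$, a local factorization
\[
f=\sum_{j=1}^{N}g_j\,h_j
\]
with \emph{continuous} coefficients $g_j$; evaluating at $\xi$ then yields the required linear combination. Such a decomposition is a Malgrange-type division statement: any $\Cal C^k$ map lying in the pointwise kernel of $M$ is, locally, a continuous combination of holomorphic kernel generators, provided $k$ exceeds a threshold controlled by the \L ojasiewicz exponents of the determinantal ideals of the matrix $(h_1,\ldots,h_N)$ at $\xi$. On a smooth $X$, this follows essentially directly from Malgrange's classical division of differentiable functions by analytic ones. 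On a singular $X$, one embeds a neighborhood of $\xi$ as an analytic subset of some $\C^N$, extends $f$ by Whitney to a $\Cal C^k$ function on an ambient open set, extends the relation module coherently to the ambient space, divides there, and restricts the resulting decomposition back to $X$.

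The main obstacle is exactly this last step: producing a continuous division with quantitative control of the number of derivatives consumed. The dependence of $k$ on both $M$ and $\xi$ is unavoidable and is dictated by the order of vanishing of the minors of $(h_j)$ at $\xi$ — that is, by how badly the generic rank of $\Cal K$ drops at that point and by the Whitney extension loss incurred in the embedding argument. Everything else (coherence of $\Cal K$, Theorem~A, the final interpolation) is routine.
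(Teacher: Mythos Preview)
The paper does not supply its own proof of this proposition; it is quoted as a special case of Spallek's theorem \cite[Satz 5.4]{Sp1}, with no argument given. So there is no ``paper's proof'' to compare against, only the citation.

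Your reduction is correct and is the natural one: once generators $h_1,\dots,h_N$ of the coherent kernel sheaf $\Cal K$ are chosen (via Theorem~A), the conclusion of the proposition is \emph{equivalent} to the span condition $f(\xi)\in\operatorname{span}_\C\{h_j(\xi)\}$. What remains is exactly your ``crux'' step, and here you should be aware that you have not reduced the problem to something more elementary --- you have reformulated it. The statement ``a $\Cal C^k$ section of $\ker M$ is, locally near $\xi$, a continuous combination of holomorphic generators of $\Cal K$, provided $k$ is large enough depending on $\xi$'' is, in essence, Spallek's theorem itself (indeed a mild strengthening, since you ask for a full decomposition rather than matching at a single point). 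For $\Cal C^\infty$ on a smooth $X$ this follows from Malgrange's flatness of $\Cal C^\infty_{X,\xi}$ over $\Cal O_{X,\xi}$: tensoring the exact sequence $\Cal O^N\xrightarrow{H}\Cal O^m\xrightarrow{M}\Cal O^n$ with $\Cal C^\infty$ preserves exactness, so $Mf=0$ forces $f=Hg$ with $g\in\Cal C^\infty$. But the finite-$k$ version with controlled loss --- the assertion that some \emph{specific} $k=k(M,\xi)<\infty$ suffices --- is precisely Spallek's contribution, and it does not drop out of Malgrange's classical division theorem, which is a $\Cal C^\infty$ statement. Your appeal to ``\L ojasiewicz exponents of the determinantal ideals'' and ``Whitney extension loss'' gestures at the right ingredients, but turning this into an honest bound on $k$ is the entire content of \cite{Sp1}; you should cite Spallek there rather than Malgrange.
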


The next proposition is well-known and more easy to prove.

\begin{prop}\label{6.10.16-} Let $D$ be a domain in $\C$,  $M:D\to \mathrm{Mat}(n\times m,\C)$  holomorphic, $M\not\equiv 0$, and $\xi\in D$. Then:

{\em (i)} There exist an open neighborhood $U$  of $\xi$, holomorphic maps $E:U\to \mathrm{GL}(n,\C)$, $F:U\to \mathrm{GL}(m,\C)$, and nonnegative  integers $\kappa_{1},\ldots,\kappa_{r}$
such that
\begin{equation}\label{6.10.16*}
M(\zeta)=E(\zeta)\begin{pmatrix}\Delta(\zeta)&0\\0&0\end{pmatrix}F(\zeta)\quad\text{for all}\quad \zeta\in U,\qquad\footnote{Possibly, some of the zeros in this block matrix have to be omitted.}
\end{equation}where $\Delta(\zeta)$ is the diagonal matrix with the diagonal $(\zeta-\xi)^{\kappa_{1}},\ldots,(\zeta-\xi)^{\kappa_{r}}$.

{\em (ii)} Let $W\subseteq D$ be a neighborhood of $\xi$ and  $c:W\to \C^m$ a continuous map with
\begin{equation}\label{6.10.16++}
Mc=0\quad\text{on}\quad W.
\end{equation}Then there exist a neighborhood $V\subseteq W$ of $\xi$ and a holomorphic map $h:V\to \C^m$ with
\begin{equation*}
Mh=0\quad\text{on}\quad V\qquad\text{and}\qquad h(\xi)=c(\xi).
\end{equation*}
\end{prop}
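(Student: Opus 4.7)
The plan is to prove (i) by establishing a holomorphic Smith normal form for $M$ on a neighborhood of $\xi$, and then to deduce (ii) as a short consequence of (i) combined with the observation that a continuous function annihilated by a nonzero power of $(\zeta-\xi)$ must vanish identically. The key structural fact underlying (i) is that the local ring $\Cal O_{\C,\xi}$ of holomorphic germs at $\xi$ is a discrete valuation ring: every nonzero germ $f$ factors uniquely as $f(\zeta)=(\zeta-\xi)^{\nu(f)}u(\zeta)$ with a unit factor $u(\xi)\ne 0$.

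For (i), since $M\not\equiv 0$, I would pick an entry $M_{i_0 j_0}$ whose valuation $\kappa_1:=\nu(M_{i_0 j_0})$ is minimal among the nonzero entries, move it to the $(1,1)$-slot by row and column permutations, and divide the first row by the unit factor so that the $(1,1)$-entry becomes exactly $(\zeta-\xi)^{\kappa_1}$. By the minimality of $\kappa_1$, each other entry in the first row and column has valuation $\ge\kappa_1$ and is therefore a holomorphic multiple of $(\zeta-\xi)^{\kappa_1}$; the standard row/column elimination then clears the rest of the first row and column, producing, on some open neighborhood $U_1$ of $\xi$,
\[
\widetilde E\,M\,\widetilde F=\begin{pmatrix}(\zeta-\xi)^{\kappa_1}&0\\0&M'\end{pmatrix}
\]
with $\widetilde E\in\mathrm{GL}(n,\Cal O(U_1))$ and $\widetilde F\in\mathrm{GL}(m,\Cal O(U_1))$. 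Iterating the same procedure on $M'$ (whose size is strictly smaller; the process terminates either when no rows or columns remain, or when the remaining block is identically zero) and composing all the accumulated row/column matrices yields the desired $E$, $F$ and exponents $\kappa_1,\ldots,\kappa_r$.

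For (ii), I apply (i) on a neighborhood $U\subseteq W$ of $\xi$ and set $c':=Fc$, a continuous $\C^m$-valued map on $U$. The hypothesis $Mc=0$ becomes $E\,\mathrm{diag}(\Delta,0)\,c'=0$ on $U$, and the invertibility of $E$ forces $(\zeta-\xi)^{\kappa_j}c'_j(\zeta)=0$ for $1\le j\le r$ and all $\zeta\in U$. Since $(\zeta-\xi)^{\kappa_j}$ vanishes only at $\zeta=\xi$, this gives $c'_j\equiv 0$ on $U\setminus\{\xi\}$, and continuity yields $c'_j\equiv 0$ on $U$; in particular $c'_j(\xi)=0$ for $j\le r$. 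The remaining components $c'_{r+1},\ldots,c'_m$ are entirely unconstrained by the equation. I then define a \emph{holomorphic} map $\tilde c:U\to\C^m$ by $\tilde c_j\equiv 0$ for $j\le r$ and $\tilde c_j\equiv c'_j(\xi)$ (a constant) for $j>r$, and put $h:=F^{-1}\tilde c$. Then $h$ is holomorphic on $U$, $Mh=E\,\mathrm{diag}(\Delta,0)\,\tilde c=0$, and $h(\xi)=F(\xi)^{-1}\tilde c(\xi)=F(\xi)^{-1}c'(\xi)=c(\xi)$, so $V:=U$ does the job. The only mildly delicate point is the bookkeeping for the elimination in (i): one must ensure that the row/column operations assembled at each step extend from germs to matrices of holomorphic functions on a common open neighborhood, which is a routine consequence of the fact that any finite collection of holomorphic germs admits a common representative on some open neighborhood of $\xi$.
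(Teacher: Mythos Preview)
Your proof is correct and follows essentially the same approach as the paper: part (i) is the local holomorphic Smith normal form (the paper simply cites the Smith factorization theorem over the local ring $\Cal O_{\C,\xi}$, while you spell out the standard DVR elimination argument), and part (ii) is deduced exactly as in the paper by setting $c'=Fc$, using that the first $r$ components vanish by continuity, and defining $h=F^{-1}\tilde c$ with the last $m-r$ components frozen at their values at $\xi$.
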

Part (i) is an application of  the Smith factorization theorem (see, e.g., \cite[Ch. III, Sect. 8]{J})
 to the ring of germs of holomorphic functions in neighborhoods of $\xi$. (A direct proof of part (i) can be found, e.g., in  \cite[Theorem 4.3.1]{GL}).

 Part (ii) is a corollary of part (i). Indeed, let $U$, $E$, $F$ and $r$ be  as in part (i), and let $W$ and $c$ be as in part (ii). Set
 \[
 \begin{pmatrix}f_1(\zeta)\\\vdots\\f_m(\zeta)\end{pmatrix}=F(\zeta)c(\zeta)\quad\text{for}\quad \zeta\in U\cap W.
 \]Then, by  \eqref{6.10.16*} and \eqref{6.10.16++}, $f_{1}(\zeta)=\ldots=f_r(\zeta)=0$ for $\zeta\in (U\cap W)\setminus\{\xi\}$, and, hence, by continuity,  $f_{1}(\xi)=\ldots=f_r(\xi)=0$. It remains to define
 \[h(\zeta)=F(\zeta)^{-1}\begin{pmatrix}0\\\vdots\\0\\f_{r+1}(\xi)\\\vdots\\f_m(\xi)\end{pmatrix}\quad \text{for}\quad \zeta\in V:=U\cap W.\]

Finally, note the following fact, which is nowadays well-known. Proofs can be found, e.g.,  in \cite{W}\footnote{Lemma \ref{9.12.15} is not explicitly stated in \cite{W},
 but it follows immediately from Lemma 1 of \cite{W}. Also, in \cite{W}, $X$ is a domain in the complex plane,  but the proof given there works
also in the general case.} or in \cite[Corollary 2]{Sh}.
\begin{prop}\label{9.12.15} Let $X$ be a complex space, $M:X\to \mathrm{Mat}(n\times m,\C)$ holomorphic, and $\xi\in X$ such that the dimension of $\ke M(\zeta)$ does not depend on $\zeta$ in some neigborhood of $\xi$.   Then
 there exist a neighborhood $U$ of $\xi$ such that the family $\{\ke M(\zeta)\}_{\zeta\in U}$ is a holomorphic sub-vector bundle of $U\times \C^m$.
\end{prop}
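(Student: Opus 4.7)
The plan is to exploit the constancy of the rank. Set $r := m - \dim \ke M(\xi)$; by hypothesis, $\rank M(\zeta) = r$ for all $\zeta$ in some neighborhood of $\xi$. Since $\rank M(\xi)=r$, the matrix $M(\xi)$ contains some invertible $r\times r$ submatrix; after permuting rows and columns (a holomorphic automorphism of the trivial bundles $U\times \C^n$ and $U\times \C^m$), I may assume it occupies the upper-left corner. By continuity of the determinant, this upper-left $r\times r$ block remains invertible on some open neighborhood $U$ of $\xi$.

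Write, on $U$,
\[
M(\zeta)=\begin{pmatrix} A(\zeta) & B(\zeta) \\ C(\zeta) & D(\zeta) \end{pmatrix},
\]
where $A:U\to \mathrm{GL}(r,\C)$ is holomorphic and $B,C,D$ are holomorphic blocks of the appropriate sizes. Left multiplication by the holomorphic invertible block matrix $\bigl(\begin{smallmatrix} I & 0 \\ -CA^{-1} & I \end{smallmatrix}\bigr)$ transforms $M$ into
\[
\begin{pmatrix} A(\zeta) & B(\zeta) \\ 0 & D(\zeta)-C(\zeta)A(\zeta)^{-1}B(\zeta) \end{pmatrix},
\]
whose rank equals $r+\rank\bigl(D-CA^{-1}B\bigr)$. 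Since left multiplication by an invertible matrix preserves the rank and, by assumption, $\rank M\equiv r$ on $U$, this forces the Schur complement to vanish: $D(\zeta)\equiv C(\zeta)A(\zeta)^{-1}B(\zeta)$ on $U$.

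Consequently, for each $\zeta\in U$,
\[
\ke M(\zeta)=\bigl\{(x,y)\in \C^r\times\C^{m-r}\,\big|\,A(\zeta)x+B(\zeta)y=0\bigr\}=\bigl\{\bigl(-A(\zeta)^{-1}B(\zeta)y,\,y\bigr)\,\big|\,y\in\C^{m-r}\bigr\},
\]
and the holomorphic map $\Psi:U\times\C^{m-r}\to U\times\C^m$ defined by $\Psi(\zeta,y)=\bigl(\zeta,(-A(\zeta)^{-1}B(\zeta)y,\,y)\bigr)$ is a fibrewise-linear injective holomorphic immersion whose image is precisely $\{\ke M(\zeta)\}_{\zeta\in U}$. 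This exhibits the family as a holomorphic sub-vector bundle of $U\times\C^m$ of rank $m-r$.

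The only step requiring actual thought is the deduction $D\equiv CA^{-1}B$ from the rank hypothesis; everything else is bookkeeping with the block decomposition and the explicit parametrization of the kernel. I do not anticipate a genuine obstacle: the statement is local linear algebra with a holomorphic parameter, and the constant-kernel-dimension hypothesis is exactly what makes the Schur-complement argument force the off-diagonal remainder to vanish identically.
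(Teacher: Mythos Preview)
Your argument is correct and self-contained: constancy of the rank lets you pin down an $r\times r$ holomorphically invertible block, the Schur-complement computation forces $D\equiv CA^{-1}B$, and the resulting graph description $\{(-A^{-1}By,y)\}$ exhibits the kernels as a holomorphic subbundle (indeed a graph, hence a direct summand with holomorphic complement $\C^r\times\{0\}$). Nothing here uses smoothness of $X$; all that is needed is that $A^{-1}$, and hence $-A^{-1}B$, is holomorphic on the complex space $U$.

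As for comparison with the paper: the paper does not supply its own proof of this proposition. It records it as a well-known fact and refers the reader to Wasow~\cite{W} and Shubin~\cite{Sh}. Your block/Schur-complement argument is the standard elementary route in the finite-dimensional case and is essentially what one finds (in varying language) in those references; Shubin's treatment is phrased more generally for Banach-space-valued families, but in the matrix setting it reduces to exactly the computation you wrote down.
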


{\em Proof of Theorem \ref{28.6.16**}.}   Denote by $\mathrm{End} \big(\mathrm{Mat}(n\times n,\C)\big)$  the space of linear endomorphisms of $\mathrm{Mat}(n\times n,\C)$, and let $\varphi^{}_{A,B}:X\to \mathrm{End} (\mathrm{Mat}(n\times n,\C))$  be  the holomorphic map
defined by
\[
\varphi_{A,B}(\zeta)\Phi= A(\zeta)\Phi-\Phi B(\zeta)\quad\text{for}\quad \zeta\in X\text{ and }\Phi\in \mathrm{Mat}(n\times n,\C).
\]Fix a basis of $\mathrm{Mat}(n\times n,\C)$, and let $M_{A,B}^{}$ be the representation matrix of $\varphi^{}_{A,B}$ with respect to this basis.

First assume that condition (i) is satisfied. Then the claim of the theorem was proved by W. Wasow \cite{W}. He considered only the case when $X$ is a domain in $\C$, but his proof works also in the general case. It goes as follows:

By definition of $\varphi^{}_{A,B}$, \eqref{30.6.16} is the kernel of $\varphi^{}_{A,B}(\zeta)$. Therefore, we have
a neighborhood $U$ of $\xi$ and a number $r\in\N$ such that
\[
\dim\ke\varphi(\zeta)=r\quad\text{for all}\quad \zeta\in U.
\] By Proposition \ref{9.12.15}, this means that the family $\{\ke \varphi^{}_{A,B}(\zeta)\}_{\zeta\in U}$ is a holomorphic sub-vector bundle of the product bundle $U\times \mathrm{Mat}(n\times n,\C)$. Since $\Phi\in \ke \varphi^{}_{A,B}(\xi)$, then,  after shrinking $U$, we we can find a holomorphic section $H$ of this bundle with $H(\xi)=\Phi$. \qed

If (ii) or (iii) is satisfied, then the claim of the theorem follows immediately from
Propositions  \ref{6.10.16-} (ii) and  \ref{8.12.15'}, respectively, with $M=M^{}_{A,B}$. \qed

\begin{rem}\label{3.9.16*} This proof shows that condition (iii) in Theorem \ref{28.6.16**} can be replaced by the following:
There exists a  positive integer $k$ depending on $\xi$, $A$ and $B$ such that, if there exist a neighborhood $U$ of $\xi$ and a $\Cal C^k$ map $T:V\to \mathrm{Mat}(n\times n,\C)$ such that $T(\xi)=\Phi$ and $TB=AT$ on $U$,
then  there exist a neighborhood $V\subseteq U$ of $\xi$ and a holomorphic map $H:V\to \mathrm{Mat}(n\times n,\C)$ with  $H(\xi)=\Phi$ and
$HB=AH$ on $V$.
\end{rem}

\section{Local counterexamples}\label{local examples}

Let $z$ and $w$ be the canonical complex coordinate functions on $\C^2$.

We begin with the following observation of O. Forster and K. J. Ramspott  \cite[page 159]{FR2}): If $\alpha$, $\beta$ and $\gamma$ are holomorphic functions in a neighborhood of the origin in $\C^2$, which solve the equation
\[
\alpha z^3+\beta w^3+\gamma z^2w^2=0
\]in this neighborhood, then, comparing the coefficients in the Taylor series, it follows easily that $\alpha(0)=\beta(0)=\gamma(0)=0$.
With continuous functions however, this equation can be solved  with $\gamma(0)\not=0$. For example,
\[
\frac{\overline z w^{2}}{\vert z\vert^{2}+\vert w\vert^2}z^3+\frac{\overline w z^{2}}{\vert z\vert^2+\vert w\vert^2}w^3=z^2w^2.
\] We use a $\Cal C^\ell$-version of this.
\begin{leer}\label{2.9.16'''} Let
$\B^2$ be the open unit ball in $\C^2$, $\ell\in \N$,
\begin{align*}&A=\begin{pmatrix} z^{2+\ell}w^{2+\ell}&z^{3+\ell}\\w^{3+\ell}&0\end{pmatrix},\qquad B=\begin{pmatrix} 0&z^{3+\ell}\\w^{3+\ell}&z^{2+\ell}w^{2+\ell}\end{pmatrix},\\
&c^{}_z=\frac{\overline z w^{2+\ell}}{\vert z\vert^{2}+\vert w\vert^2},\quad
c_w^{}=\frac{\overline w z^{2+\ell}}{\vert z\vert^2+\vert w\vert^2},\quad S=\begin{pmatrix} 1&c_w\\-c_z&1\end{pmatrix}.
\end{align*}
Then it is again easy to see  that
\begin{equation}\label{5.1.16-}
c^{}_zz^{3+\ell}+c^{}_ww^{3+\ell}=z^{2+\ell}w^{2+\ell}\quad\text{on}\quad\B^2,
\end{equation}and, comparing the coefficients of the Taylor series\footnote{Below we explain this in detail in the case of Lemmas \ref{6.1.16} and \ref{6.1.16'}, each of which  is stronger than Lemma \ref{5.1.16}.}, we get
\end{leer}
\begin{lem}\label{5.1.16}
Suppose $\alpha$, $\beta$, $\gamma$ are holomorphic functions in a neighborhood  of the origin in $\C^2$ such that
\begin{equation*}
\alpha z^{\ell+3}+\beta w^{\ell+3}+\gamma z^{\ell+2}w^{\ell+2}=0
\end{equation*} in this neighborhood. Then $\alpha(0)=\beta(0)=\gamma(0)=0$.
\end{lem}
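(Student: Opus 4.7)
The plan is a direct Taylor-series coefficient comparison at the origin. Write
\[
\alpha(z,w)=\sum_{i,j\ge 0}a_{ij}z^iw^j,\quad \beta(z,w)=\sum_{i,j\ge 0}b_{ij}z^iw^j,\quad \gamma(z,w)=\sum_{i,j\ge 0}c_{ij}z^iw^j
\]
on a polydisc around the origin. Then $\alpha z^{\ell+3}$ is a power series each of whose monomials has $z$-degree $\ge \ell+3$, while $\beta w^{\ell+3}$ consists of monomials with $w$-degree $\ge \ell+3$, and $\gamma z^{\ell+2}w^{\ell+2}$ consists of monomials with both $z$- and $w$-degree $\ge \ell+2$.

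First I would pick out the coefficient of the monomial $z^{\ell+2}w^{\ell+2}$ on both sides of the identity $\alpha z^{\ell+3}+\beta w^{\ell+3}+\gamma z^{\ell+2}w^{\ell+2}=0$. The first summand contributes nothing, since every monomial in $\alpha z^{\ell+3}$ has $z$-degree $\ge \ell+3>\ell+2$; similarly the second summand contributes nothing because of the $w$-degree bound. The third summand contributes exactly $c_{00}=\gamma(0)$. Hence $\gamma(0)=0$.

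Next I would read off the coefficient of the pure monomial $z^{\ell+3}$ (i.e., $z^{\ell+3}w^0$). In $\beta w^{\ell+3}$ and in $\gamma z^{\ell+2}w^{\ell+2}$ every monomial has strictly positive $w$-degree, so they contribute nothing; in $\alpha z^{\ell+3}$ the coefficient is $a_{00}=\alpha(0)$. Therefore $\alpha(0)=0$. The symmetric argument applied to the coefficient of $w^{\ell+3}$ gives $\beta(0)=0$.

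There is no real obstacle here; the only thing to check carefully is that the three chosen monomials each pick up a contribution from only one of the three summands, which is immediate from the degree bounds above. A more conceptual way to phrase the same argument, which I would mention only if space allowed, is that $z^{\ell+3},w^{\ell+3},z^{\ell+2}w^{\ell+2}$ form a minimal generating system of the ideal they generate in $\mathbb{C}\{z,w\}$ whose images in the quotient by the maximal ideal are linearly independent modulo $\mathfrak m\cdot(z^{\ell+3},w^{\ell+3},z^{\ell+2}w^{\ell+2})$.
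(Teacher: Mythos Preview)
Your proof is correct and is precisely the ``comparing the coefficients of the Taylor series'' argument the paper indicates for this lemma (the paper does not spell out the details here, only in the stronger Lemmas for restricted varieties). The choice of the three monomials $z^{\ell+3}$, $w^{\ell+3}$, $z^{\ell+2}w^{\ell+2}$ and the degree bounds isolating each constant term are exactly what is needed.
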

Also it is easy to see that the functions
 $c_z$ and $c_w$ are of class $\Cal C^\ell$ on $\C^2$ and that $\vert c_zc_w\vert<1$ on $\B^2$. Hence
$S$ is of class  $\Cal C^\ell$  on $\C^2$, and $S(\zeta)\in \mathrm{GL}(n,\C)$ for all $\zeta\in\B^2$. Moreover,
\begin{align*}
&AS=\begin{pmatrix} z^{2+\ell}w^{2+\ell}-c_zz^{3+\ell}&c_wz^{\ell+2}w^{\ell+2}+z^{3+\ell}\\w^{3+\ell}&c_w^{}w^{3+\ell}\end{pmatrix},\\
&SB=\begin{pmatrix} c_w^{}w^{3+\ell}&z^{3+\ell}+c_wz^{2+\ell}w^{2+\ell}\\w^{3+\ell}&-c_zz^{3+\ell}+z^{2+\ell}w^{2+\ell}\end{pmatrix},
\end{align*} which implies by \eqref{5.1.16-} that
\begin{equation}\label{2.9.16''}SBS^{-1}=A\quad\text{on}\quad \B^2.
\end{equation}Hence $A$ and $B$ are globally $\Cal C^\ell$ similar on $\B^2$. On the other hand, we have
\begin{lem}\label{5.1.16*} Let $U$ be an open neighborhood of the origin in $\C^2$, and $H:U\to \mathrm{Mat}(2\times 2,\C)$ holomorphic. Then:

{\em (i)} If, on $U$, $AH=HB$  or $HA=BH$, then $H(0)=0$.

{\em (ii)} If, on $U$, $AH=HA$ or $AB=BA$, then  $H(0)=\lambda I_2$ for some $\lambda\in \C$.
\end{lem}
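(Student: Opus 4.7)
The plan is to write $H = (h_{ij})_{i,j=1,2}$, multiply out the matrix equations, and extract from the entry-by-entry comparisons either identities that fit Lemma \ref{5.1.16} directly, or divisibility relations controlled by unique factorization in the local ring $\Cal O_{\C^2,0}$ of holomorphic germs at the origin. (I read ``$AB=BA$'' in (ii) as the evident typo for ``$HB=BH$'', since $A$ and $B$ are fixed matrices.)

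For (i), I first treat $AH = HB$. The $(1,2)$-entry comparison simplifies, after cancelling the common term $h_{12}z^{\ell+2}w^{\ell+2}$, to $z^{\ell+3}(h_{22}-h_{11}) = 0$; the $(2,1)$-entry gives the same relation. Hence $h_{11} = h_{22}$ identically. The $(1,1)$-entry then reads
\[
h_{11}\, z^{\ell+2}w^{\ell+2} + h_{21}\, z^{\ell+3} - h_{12}\, w^{\ell+3} = 0
\]
in a neighborhood of $0$, and Lemma \ref{5.1.16}, applied with $\alpha = h_{21}$, $\beta = -h_{12}$, $\gamma = h_{11}$, forces $h_{11}(0) = h_{12}(0) = h_{21}(0) = 0$, hence also $h_{22}(0) = 0$. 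The case $HA = BH$ is handled by the mirror-image computation: the off-diagonal entries again yield $h_{11} = h_{22}$, and the $(1,1)$-entry produces an identity of the same form (only the sign of $h_{21}$ differs), to which Lemma \ref{5.1.16} applies verbatim.

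For (ii), take $AH = HA$. The $(1,1)$- and $(2,2)$-entry comparisons both collapse to the single identity
\[
h_{12}\, w^{\ell+3} = h_{21}\, z^{\ell+3}.
\]
Since $z$ and $w$ are coprime in the unique factorization domain $\Cal O_{\C^2,0}$, this forces $h_{12} = z^{\ell+3}g$ and $h_{21} = w^{\ell+3}g$ for some germ $g$; in particular $h_{12}(0) = h_{21}(0) = 0$. The $(1,2)$-entry becomes $z^{\ell+2}w^{\ell+2} h_{12} = z^{\ell+3}(h_{11}-h_{22})$; cancelling $z^{\ell+2}$ and substituting $h_{12} = z^{\ell+3}g$ yields $h_{11}-h_{22} = z^{\ell+2}w^{\ell+2}g$, which vanishes at the origin. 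Hence $h_{11}(0) = h_{22}(0) =: \lambda$ and $H(0) = \lambda I_2$. The case $HB = BH$ is strictly analogous: the same two diagonal-entry comparisons again give $h_{12}w^{\ell+3} = h_{21}z^{\ell+3}$, and the $(1,2)$-entry again yields $h_{11}(0) = h_{22}(0)$.

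There is no substantive obstacle here, only bookkeeping: once the products are expanded and matched, (i) reduces to a single application of Lemma \ref{5.1.16} (twice) and (ii) reduces to the coprimality of $z$ and $w$ in $\Cal O_{\C^2,0}$. The only care needed is to pick, for each of the four matrix equations, the entry-pair that yields the cleanest scalar identity.
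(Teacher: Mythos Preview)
Your proof is correct and follows essentially the same approach as the paper: expand the four products $AH$, $HB$, $HA$, $BH$ and compare entries. The only differences are bookkeeping --- in (i) you first use the off-diagonal entries to get $h_{11}\equiv h_{22}$ and then apply Lemma~\ref{5.1.16} once, whereas the paper applies Lemma~\ref{5.1.16} to both diagonal entries; in (ii) you invoke unique factorization in $\Cal O_{\C^2,0}$ to handle $h_{12}w^{\ell+3}=h_{21}z^{\ell+3}$, whereas the paper simply reads this (and the $(1,2)$-entry identity) as another instance of Lemma~\ref{5.1.16} with one coefficient zero.
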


\vspace{2mm}
\noindent {\em Proof.} Let $H=\big(\begin{smallmatrix} a&b\\c&d\end{smallmatrix}\big)$. Then
\begin{align}
&\label{3.9.16}AH=\begin{pmatrix} a z^{2+\ell}w^{2+\ell}+c z^{3+\ell}&b z^{2+\ell} w^{2+\ell}+d z^{3+\ell}\\a w^{3+\ell}&b w^{3+\ell}\end{pmatrix},\\
&\label{3.9.16'}HB=\begin{pmatrix} b w^{3+\ell}&a z^{3+\ell}+b z^{2+\ell}w^{2+\ell}\\d w^{3+\ell}&c z^{3+\ell}+dz^{2+\ell} w^{2+\ell}\end{pmatrix},\\
&\label{3.9.16''}HA=\begin{pmatrix} a z^{2+\ell}w^{2+\ell}+b w^{3+\ell}&a z^{3+\ell}\\c z^{2+\ell}w^{2+\ell}+dw^{3+\ell}&cz^{3+\ell}\end{pmatrix},\\
&\label{3.9.16'''}BH=\begin{pmatrix} c z^{3+\ell}&d z^{3+\ell}\\aw^{3+\ell}+cz^{2+\ell}w^{2+\ell}&bw^{3+\ell}+dz^{2+\ell}w^{2+\ell}\end{pmatrix}.
\end{align}  In particular:
\begin{align*}
&\text{if }AH=HB,\text{ then }az^{2+\ell}w^{2+\ell}+c z^{3+\ell}= b w^{3+\ell}=c z^{3+\ell}+d z^{2+\ell} w^{2+\ell},\\
& \text{if }HA=BH,\text{ then }a z^{2+\ell}w^{2+\ell}+b w^{3+\ell}=cz^{3+\ell}=bw^{3+\ell}+dz^{2+\ell}w^{2+\ell},\\
&\text{if }AH=HA, \text{ then }cz^{3+\ell}=bw^{3+\ell} \text{ and }(a-d)z^{3+\ell}=bz^{2+\ell}w^{2+\ell},\\
&\text{if }BH=HB, \text{ then }bw^{3+\ell}=cz^{3+\ell} \text{ and }(d-a)w^{3+\ell}=cz^{2+\ell}w^{2+\ell}.
\end{align*} By Lemma \ref{5.1.16}, this yields:
\begin{align*}
&\text{if }AH=HB\text{ or }HA=BH,\text{ then } a(0)=b(0)=c(0)=d(0)=0,\\
&\text{if }AH=HA \text{ or }BH=HB, \text{ then }b(0)=c(0)=0\text{ and }a(0)=d(0).\quad\qed
\end{align*}

Lemma  \ref{5.1.16*} (i) in particular says that $A$ and $B$ are not locally holomorphically similar at $0$. At the end of this section we prove the following stronger
\begin{thm}\label{6.1.16''} Suppose {\em (a)} $X=\{z^p=w^q\}$, where $p,q\in\N$ such that $\ell+2< q<p$ and $p$, $q$ are  relatively prime,
or {\em (b)} $X$ is the union of  $2\ell+5$ pairwise different one-dimensional linear
subspaces of $\C^2$.

Then the restrictions $A\big\vert_X$ and $B\big\vert_X$ are not locally holomorphically similar at $0$.
\end{thm}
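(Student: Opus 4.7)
The plan is to argue by contradiction: suppose there is an invertible holomorphic germ $H = \left(\begin{smallmatrix} a & b \\ c & d \end{smallmatrix}\right)$ at $0$ on $X$ with $AH = HB$, and derive $\det H(0)=0$. Equating the entries of $AH=HB$ as in the proof of Lemma~\ref{5.1.16*} yields $(a-d)z^{\ell+3}=0$ and $(a-d)w^{\ell+3}=0$ on $X$, together with the single genuine relation
\begin{equation*}
a z^{\ell+2}w^{\ell+2} + c z^{\ell+3} - b w^{\ell+3} = 0\quad\text{on } X. \tag{$\star$}
\end{equation*}
In both cases I first check $a=d$ on $X$: in case (a), $X$ is analytically irreducible at $0$ and $z^{\ell+3}$ is a non-zero-divisor in $\Cal O_{X,0}$; in case (b), the two relations restrict on each line $L_i$ to $(\tilde a_i-\tilde d_i)\alpha_i^{\ell+3}t^{\ell+3}=0$ and $(\tilde a_i-\tilde d_i)\beta_i^{\ell+3}t^{\ell+3}=0$, and since $(\alpha_i,\beta_i)\neq (0,0)$ at least one forces $\tilde a_i=\tilde d_i$. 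It therefore suffices to prove $a(0)=c(0)=0$ in case (a) and $a(0)=b(0)=c(0)=0$ in case (b); either gives $\det H(0)=a(0)^2-b(0)c(0)=0$, the contradiction.

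For case (a), I pass to the normalization $\pi(t)=(t^q,t^p)$, which identifies $\Cal O_{X,0}$ with the subring $\C\{t^q,t^p\}\subset\C\{t\}$ of convergent power series whose non-zero exponents lie in the numerical semigroup $S:=\langle p,q\rangle\subset\N$. Pulling back $(\star)$ and dividing by $t^{q(\ell+3)}$ yields
\[
\tilde c(t) \;=\; \tilde b(t)\,t^{(p-q)(\ell+3)} \;-\; \tilde a(t)\,t^{p(\ell+2)-q}\quad\text{in } \C\{t\},
\]
with $\tilde a,\tilde b,\tilde c\in\C\{t^q,t^p\}$. Both exponents on the right are strictly positive, so $c(0)=\tilde c(0)=0$ at once. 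To extract $a(0)$, I will verify the key arithmetic fact $p(\ell+2)-q\notin S$: if $p(\ell+2)-q=\mu p+\nu q$ with $\mu,\nu\ge 0$, coprimality of $p,q$ gives $q\mid \ell+2-\mu$, and the constraint $0\le \mu\le\ell+2<q$ forces $\mu=\ell+2$, whence $\nu=-1<0$, a contradiction. A parallel argument shows $q(\ell+2)-p$ is either negative or not in $S$. Since $\tilde c\in\C\{t^q,t^p\}$ has zero coefficient at $t^{p(\ell+2)-q}$, matching this coefficient on the right-hand side gives $-\tilde a(0)=0$, hence $a(0)=0$.

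For case (b), write $X=\bigcup_{i=1}^{2\ell+5}L_i$ with $L_i=\{(\alpha_i t,\beta_i t):t\in\C\}$ and the projective points $[\alpha_i:\beta_i]\in\p^1$ pairwise distinct. Restricting $(\star)$ to $L_i$ and dividing by $t^{\ell+3}$ gives
\[
\alpha_i^{\ell+2}\beta_i^{\ell+2}\,\tilde a_i(t)\,t^{\ell+1} \;+\; \alpha_i^{\ell+3}\,\tilde c_i(t) \;-\; \beta_i^{\ell+3}\,\tilde b_i(t) \;=\; 0.
\]
Evaluating at $t=0$ yields $c(0)\alpha_i^{\ell+3}=b(0)\beta_i^{\ell+3}$ for every $i$; since the binary form $c(0)X^{\ell+3}-b(0)Y^{\ell+3}$ has at most $\ell+3$ projective zeros and $2\ell+5>\ell+3$, this forces $b(0)=c(0)=0$. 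Extracting the coefficient of $t^{\ell+1}$ next, and expanding $\tilde b_i,\tilde c_i$ through the $(\ell+1)$-st Taylor jets of $b,c$ at $0$, the result is a homogeneous polynomial $P(X,Y)$ of degree $2\ell+4$ with $P(\alpha_i,\beta_i)=0$ for $i=1,\ldots,2\ell+5$. Each of the $2\ell+5$ monomials $X^j Y^{2\ell+4-j}$, $j=0,\ldots,2\ell+4$, appears in $P$ exactly once: the central monomial $X^{\ell+2}Y^{\ell+2}$ has coefficient $a(0)$, while the remaining $2\ell+4$ coefficients are, up to sign, the Taylor coefficients $b_{j,\ell+1-j}$ (for $j\le\ell+1$) and $c_{j-\ell-3,2\ell+4-j}$ (for $j\ge\ell+3$). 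The $(2\ell+5)\times(2\ell+5)$ coefficient matrix $[\alpha_i^j\beta_i^{2\ell+4-j}]$ is a Vandermonde matrix attached to the degree-$(2\ell+4)$ Veronese embedding of $\p^1$, non-singular because the $[\alpha_i:\beta_i]$ are distinct; hence every coefficient of $P$ vanishes, and in particular $a(0)=0$.

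The main technical hurdle is the sharp arithmetic non-representability $p(\ell+2)-q\notin\langle p,q\rangle$ in case (a), which uses precisely the hypothesis $q>\ell+2$ and breaks at $q=\ell+2$. The companion tight counting in case (b) is that $2\ell+5=\dim_\C\{\text{binary forms of degree }2\ell+4\}$, so that exactly $2\ell+5$ distinct lines furnish enough linear constraints to kill every monomial via the non-singular Vandermonde; fewer lines would leave $a(0)$ undetermined.
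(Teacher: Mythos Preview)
Your proof is correct and follows essentially the same route as the paper: the parametrization $t\mapsto(t^q,t^p)$ with coefficient comparison in case~(a) and the Vandermonde argument on the $2\ell+5$ lines in case~(b) are exactly the content of the paper's Lemmas~\ref{6.1.16} and~\ref{6.1.16'}. The only cosmetic difference is that the paper packages these as stand-alone lemmas (yielding $H(0)=0$ outright), whereas you first reduce via $a=d$ on $X$ and then prove only what is needed for $\det H(0)=0$.
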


\begin{lem}\label{6.1.16} Let $X=\{z^p=w^q\}$, where $p,q\in\N$ such that $\ell+2< q<p$ and $p$, $q$ are  relatively prime.
Suppose $U$ is a neighborhood of the origin in $\C^2$, and  $\alpha,\beta,\gamma:U\to \C$ are holomorphic such that
\begin{equation}\label{29.11.15'neu}
\alpha z^{\ell+3}+\beta w^{\ell+3}+\gamma z^{\ell+2}w^{\ell+2}=0\quad\text{on}\quad X\cap U.
\end{equation} Then $\alpha(0)=\beta(0)=\gamma(0)=0$.
\end{lem}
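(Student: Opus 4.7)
The plan is to compose equation \eqref{29.11.15'neu} with the primitive Puiseux parametrization of $(X,0)$ and then read off the three desired vanishings from suitable coefficients of $t$. Since $\gcd(p,q)=1$, the germ $(X,0)$ is irreducible and its normalization is given by $\pi(t)=(t^q,t^p)$, so the induced ring map $\Cal O_{X,0}\hookrightarrow\C\{t\}$ is injective. Composing \eqref{29.11.15'neu} with $\pi$ therefore yields the formal identity
\[
\alpha(t^q,t^p)\,t^{q(\ell+3)}+\beta(t^q,t^p)\,t^{p(\ell+3)}+\gamma(t^q,t^p)\,t^{(p+q)(\ell+2)}\equiv 0
\]
in $\C[[t]]$. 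Writing $\alpha=\sum\alpha_{ij}z^iw^j$ and likewise for $\beta,\gamma$, each monomial on the left has exponent of the form $iq+jp+E$ with $(i,j)\in\N^2$ and $E\in\{q(\ell+3),\,p(\ell+3),\,(p+q)(\ell+2)\}$.

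Next, I would extract $\alpha(0)=\alpha_{00}$, $\gamma(0)=\gamma_{00}$, and $\beta(0)=\beta_{00}$ as the coefficients of $t^{q(\ell+3)}$, $t^{(p+q)(\ell+2)}$, and $t^{p(\ell+3)}$, respectively. For each extraction it suffices to check that all cross-contributions vanish, i.e., that the relevant offsets $E-E'$ between the three special exponents either are negative or, when non-negative, do not belong to the numerical semigroup $S:=\{iq+jp:i,j\in\N\}$. The offsets to inspect are $(q-p)(\ell+3)$ and $q-p(\ell+2)$ (for the $\alpha$-extraction; both are negative by $p>q$ and $\ell+2\geq 2$); $p(\ell+2)-q$ and $q(\ell+2)-p$ (for the $\gamma$-extraction); and $(p-q)(\ell+3)$ and $p-q(\ell+2)$ (for the $\beta$-extraction). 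Each non-membership is proved by the same recipe: set $iq+jp$ equal to the integer in question, use $\gcd(p,q)=1$ to deduce a divisibility condition of the form $q\mid\ell+c-j$ (or $p\mid\ell+c-i$), and combine it with the bound $\ell+2<q$ either to force $i$ or $j$ negative or to contradict $q\nmid p$.

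The main technical obstacle I expect is the check $(p-q)(\ell+3)\notin S$: this is the one step where the hypothesis $q>\ell+2$ is used with essentially no slack. The divisibility $q\mid\ell+3-j$ combined with $0\leq\ell+3-j\leq\ell+3$ leaves exactly two candidates, $\ell+3-j=0$ (which forces $i=-(\ell+3)<0$) and $\ell+3-j=q$; the latter survives precisely in the borderline regime $q=\ell+3$ and must be handled separately, for instance by using the defining relation $z^p=w^q$ on $X$ to rewrite the offending monomial, factoring a power of $z$ out of the equation (legal since irreducibility of $\Cal O_{X,0}$ makes $z$ a non-zero-divisor), and iterating the coefficient comparison on the reduced equation. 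Once this borderline bookkeeping is in place, the three extractions combine to give $\alpha(0)=\beta(0)=\gamma(0)=0$.
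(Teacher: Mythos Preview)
Your approach---parametrize by $t\mapsto(t^q,t^p)$ and read off $\alpha_{00},\beta_{00},\gamma_{00}$ as the coefficients of $t^{q(\ell+3)}$, $t^{p(\ell+3)}$, $t^{(p+q)(\ell+2)}$---is exactly the paper's, and your bookkeeping of the cross-terms matches the paper's sets $A_\beta,\ldots,C_\beta$ case by case. You are also right to flag the borderline $q=\ell+3$ in the $\beta$-extraction: there $(p-q)(\ell+3)=(p-q)q\in S$, so the coefficient of $t^{p(\ell+3)}$ picks up $\alpha_{p-q,0}$ in addition to $\beta_{00}$. (The paper's proof of $B_\alpha=\emptyset$ asserts a contradiction at this point that does not actually materialize when $q=\ell+3$.)

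However, your proposed rescue cannot succeed, because the lemma as stated is \emph{false} when $q=\ell+3$. Take $\alpha=-z^{\,p-q}$, $\beta=1$, $\gamma=0$; then
\[
\alpha z^{\ell+3}+\beta w^{\ell+3}+\gamma z^{\ell+2}w^{\ell+2}=-z^{p}+w^{q}=0\quad\text{on }X,
\]
yet $\beta(0)=1$. (Concretely: $\ell=0$, $q=3$, $p=4$ gives $-z^4+w^3$.) The ``offending monomial'' is literally the defining relation of $X$, so no rewriting via $z^p=w^q$ and cancelling powers of $z$ can separate $\beta_{00}$ from $\alpha_{p-q,0}$; they are genuinely entangled. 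Your argument (and the paper's) is complete and correct under the stronger hypothesis $q>\ell+3$; under the stated hypothesis $q>\ell+2$ only the partial conclusion $\alpha(0)=\gamma(0)=0$ survives. That partial conclusion, incidentally, already suffices for the application in Theorem~\ref{6.1.16''}, since it forces $a(0)=c(0)=d(0)=0$ and hence $\det H(0)=0$.
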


\begin{proof} Choose $0<\varepsilon<1$ so small that the closed bidisk $\max(\vert z\vert,\vert w\vert)\le \varepsilon$ is contained in $U$, and let
\[
\sum_{j,k=0}^\infty \alpha_{jk}z^j w^k,\quad \sum_{j,k=0}^\infty \beta_{jk}z^j w^k,\quad\sum_{j,k=0}^\infty \gamma_{jk}z^j w^k
\] be the Taylor series  of $\alpha$, $\beta$ and $\gamma$, respectively.  Then, by \eqref{29.11.15'neu},
\[
\sum_{j,k=0}^\infty \alpha_{jk}z^{j+\ell+3} w^k+\sum_{j,k=0}^\infty \beta_{jk}z^j w^{k+\ell+3}+\sum_{j,k=0}^\infty \gamma_{jk}z^{j+\ell+2} w^{k+\ell+2}=0
\]if $z^p=w^q$ and $\max(\vert z\vert,\vert w\vert)< \varepsilon$. With $z=t^q$ and $w=t^p$ for $0\le t< \varepsilon$, this yields
\begin{equation*}
\sum_{j,k=0}^\infty \alpha_{jk}t_{}^{(j+\ell+3)q+kp} + \sum_{j,k=0}^\infty \beta_{jk}t_{}^{jq+(k+\ell+3)p}+\sum_{j,k=0}^\infty \gamma_{jk}t_{}^{(j+\ell+2)q+(k+\ell+2)p}=0
\end{equation*}for all  $0\le t< \varepsilon$. Comparing the coefficients of $t^{(\ell+3)q}$, $t^{(\ell+3)p}$ and $t^{(\ell+2)(p+q)}$, we get
\begin{align*}
\alpha^{}_{00}+\sum_{(j,k)\in A_\beta} \beta_{jk}+\sum_{(j,k)\in A_\gamma}\gamma_{jk}&=0,\\
\sum_{(j,k)\in B_\alpha} \alpha_{jk}+\beta^{}_{00}+\sum_{(j,k)\in B_\gamma}\gamma^{}_{jk}&=0,\\
\sum_{(j,k)\in C_\alpha} \alpha_{jk}+\sum_{(j,k)\in C_\beta}\beta^{}_{jk}+\gamma^{}_{00}&=0,
\end{align*}
where $A_\beta,\ldots, C_\beta$  are the subsets of $\N\times \N$ defined by
\begin{align*}
(j,k)\in A_\beta&\overset{\mathrm{def}}{\Longleftrightarrow}jq+(k+\ell+3)p=(\ell+3)q\Longleftrightarrow(k+\ell+3)p=(\ell+3-j)q,\\
(j,k)\in A_\gamma&\overset{\mathrm{def}}{\Longleftrightarrow}(j+\ell+2)q+(k+\ell+2)p=(\ell+3)q\Longleftrightarrow (k+\ell+2)p=(1-j)q,\\
(j,k)\in B_\alpha&\overset{\mathrm{def}}{\Longleftrightarrow}(j+\ell+3)q+kp=(\ell+3)p\Longleftrightarrow (j+\ell+3)q=(\ell+3-k)p,\\
 (j,k)\in B_\gamma&\overset{\mathrm{def}}{\Longleftrightarrow}(j+\ell+2)q+(k+\ell+2)p =(\ell+3)p\Longleftrightarrow (j+\ell+2)q=(1-k)p,\\
(j,k)\in C_\alpha&\overset{\mathrm{def}}{\Longleftrightarrow}(j+\ell+3)q+kp=(\ell+2)(p+q)\Longleftrightarrow (j+1)q=(\ell+2-k)p,\\
(j,k)\in C_\beta&\overset{\mathrm{def}}{\Longleftrightarrow}jq+(k+\ell+3)p=(\ell+2)(p+q)\Longleftrightarrow (\ell+2-j)q=(k+1)p.
\end{align*}
It is sufficient to prove that $A_\beta=A_\gamma=B_\alpha=B_\gamma=C_\alpha=C_\beta=\emptyset$.

Assume $(k+\ell+3)p=(\ell+3-j)q$. Contrary to  $q<p$, then it follows
\[
p=\frac{\ell+3-j}{k+\ell+3}q\le \frac{\ell+3}{k+\ell+3}\le q.
\]

Assume  $(k+\ell+2)p=(1-j)q$. Contrary to $p>p/2$, then it follows
\[
p=\frac{1-j}{k+\ell+2}q\le \frac{q}{2}<\frac{p}{2}.
\]

Assume $(j+\ell+3)q=(\ell+3-k)p$. Since $p$ and $q$ are relatively prime, this implies that $j+\ell+3=np$, for some integer $n\in \N^*$. $n=1$ is not possible, for this would imply that $p=j+\ell+3\le \ell+3\le q<p$.  $n\ge 2$ is also impossible, as this would imply that
$p\ge \ell+3\ge j+\ell+3\ge 2p$.

Assume $(j+\ell+2)q=(1-k)p$. This implies that $k=0$ and therefore $(j+\ell+2)q=p$, which is not possible, since $p$ and $q$ are relatively prime.

Assume $(j+1)q=(\ell+2-k)p$.  As $p$ and $q$ are relatively prime, this implies that $\ell +2-k$ is positive and can be divided by $q$. In particular, $\ell+2-k\ge q$, which is not possible, for $\ell+2-k<q$.

Assume $(\ell+2-j)q=(k+1)p$. Since $p$ and $q$ are relatively prime, this implies that $\ell+2-j=np$ for some $n\in\N^*$ and, further,  $p>\ell+2\ge\ell+2-j=np\ge p$, contrary to $q<p$.
\end{proof}

\begin{lem}\label{6.1.16'}Let $t_1,\ldots,t_{2\ell+5}$ be pairwise different complex numbers, and
\[
X:=\bigcup_{j=1}^{2\ell+5}\{w=t_j z\}.
\]
Suppose $U$ is a neighborhood of the origin in $\C^2$, and  $\alpha,\beta,\gamma:U\to \C$ are holomorphic such that
\begin{equation}\label{29.11.15'}
\alpha z^{\ell+3}+\beta w^{\ell+3}+\gamma z^{\ell+2}w^{\ell+2}=0\quad\text{on}\quad X\cap U.
\end{equation} Then $\alpha(0)=\beta(0)=\gamma(0)=0$.
\end{lem}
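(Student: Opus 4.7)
The strategy mirrors the proof of Lemma \ref{6.1.16}, but with the Puiseux-type parametrization replaced by the obvious linear parametrizations of the given lines. For each $j \in \{1, \ldots, 2\ell+5\}$, parametrize the line $\{w = t_j z\}$ by $s \mapsto (s, t_j s)$, and form the holomorphic function
\[
F_j(s) := \alpha(s, t_j s)\,s^{\ell+3} \;+\; t_j^{\ell+3}\,\beta(s, t_j s)\,s^{\ell+3} \;+\; t_j^{\ell+2}\,\gamma(s, t_j s)\,s^{2\ell+4}.
\]
By \eqref{29.11.15'}, $F_j \equiv 0$ in a neighbourhood of $0$, so every Taylor coefficient of $F_j$ at $0$ vanishes, for every $j$. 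The plan is to extract two such coefficients and, for each of them, view the resulting vanishing relation as a polynomial identity in $t_j$ of controlled degree holding at $2\ell+5$ distinct values of $t_j$, so the polynomial must vanish identically.

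First I would read off the coefficient of $s^{\ell+3}$ in $F_j$: since $\gamma$ contributes only to $s^{2\ell+4}$ and higher, this coefficient is simply $\alpha(0) + t_j^{\ell+3}\beta(0)$, a polynomial in $t_j$ of degree at most $\ell+3$. As it vanishes at $2\ell+5 > \ell+3$ points, it is the zero polynomial, so $\alpha(0) = \beta(0) = 0$.

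Next I would read off the coefficient of $s^{2\ell+4}$ in $F_j$. Writing Taylor series $\alpha = \sum \alpha_{mn} z^m w^n$ and similarly for $\beta,\gamma$, one finds that the $\alpha$-term yields $\sum_{m+n=\ell+1}\alpha_{mn}\, t_j^n$, contributing only to exponents in $\{0,\dots,\ell+1\}$; the $\beta$-term yields $t_j^{\ell+3}\sum_{m+n=\ell+1}\beta_{mn}\, t_j^n$, contributing only to exponents in $\{\ell+3,\dots,2\ell+4\}$; and the $\gamma$-term contributes only $\gamma(0)\, t_j^{\ell+2}$. The overall polynomial in $t_j$ has degree at most $2\ell+4$, and $\gamma(0)$ appears as the coefficient of the monomial $t_j^{\ell+2}$, which receives no contribution from $\alpha$ or $\beta$. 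Vanishing at $2\ell+5$ distinct points forces all coefficients to be zero; in particular $\gamma(0) = 0$.

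The only genuinely nontrivial point is spotting the exponent ``gap'': the $t_j$-exponents contributed by $\alpha$ and $\beta$ to the coefficient of $s^{2\ell+4}$ miss the value $\ell+2$, which is precisely where $\gamma(0)$ sits, and this is what isolates $\gamma(0)$. The assumption of $2\ell+5$ distinct lines then plays its role exactly once, supplying more evaluation points than the largest polynomial degree that appears ($\ell+3$ in the first step, $2\ell+4$ in the second).
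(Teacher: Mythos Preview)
Your proof is correct and follows essentially the same approach as the paper: parametrize each line, extract Taylor coefficients in the parameter, and observe that the resulting expression is a polynomial in $t_j$ of degree at most $2\ell+4$ vanishing at $2\ell+5$ distinct points. Your presentation is in fact a mild streamlining of the paper's: the paper argues each of $\alpha(0)=0$, $\beta(0)=0$, $\gamma(0)=0$ separately by contradiction (dividing through by the assumed nonzero value and then invoking the Vandermonde determinant), whereas you work directly with the coefficients of $s^{\ell+3}$ and $s^{2\ell+4}$ and obtain $\alpha(0)=\beta(0)=0$ simultaneously from the first and isolate $\gamma(0)$ via the exponent gap at $t_j^{\ell+2}$ in the second.
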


\begin{proof}

To prove that $\alpha(0)=0$, we assume that $\alpha(0)\not=0$. Setting $b=\beta/\alpha$ and $c=\gamma/\alpha$, then we get holomorphic functions $b, c$ in a neighorhood $V\subseteq U$ of $0$ such that
\[
z^{\ell+3}=c z^{2+\ell}w^{\ell+2}-b w^{\ell+3}=0\quad \text{on}\quad X\cap V.
\]It follows that, for $1\le j\le 2\ell+ 5$ and all $\zeta$ in some neighborhood  of zero in the complex plane,
\[\zeta^{\ell+3}=c(\zeta,t_j\zeta) \zeta^{2\ell+4}t_j^{\ell+2}-b(\zeta,t_j\zeta)\zeta^{\ell+3}t_j^{\ell+3}
\]and, hence,
\[1=c(\zeta,t_j\zeta) \zeta^{\ell+1}t_j^{\ell+2}-b(\zeta,t_j\zeta)t_j^{\ell+3}.
\]Hence, $1=-t_j^{\ell+3}\beta(0,0)$ for $1\le j\le 2\ell+5$. This implies that $\beta(0,0)\not=0$ and $t_1,\ldots,t_{2\ell+5}$ are
solutions of the equation
\[
t^{\ell+3}=-\frac{1}{\beta(0,0)}.
\]As $2\ell+5>\ell+3$ and the numbers $t_j$ are pairwise different, this is impossible.

Changing the roles of $z$ and $w$, one proves in the same way  that $\beta(0)=0$.

Finally we assume that $\gamma(0)\not=0$. Setting $a=\alpha/\gamma$ and $b=\beta/\gamma$, then we
get holomorphic functions $a,b$ in a neighborhood $V\subseteq U$ of $0$ such that
\[
a z^{\ell+3}+b w^{\ell+3}=z^{\ell+2}w^{\ell+2}\quad\text{on}\quad X\cap V.
\]It follows that, for $1\le j\le 2\ell+ 5$ and all $\zeta$ in some neighborhood $\Omega$ of zero in the complex plane
\[
a(\zeta,t_j\zeta ) \zeta^{\ell+3}+b(\zeta,t_j\zeta) \zeta^{\ell+3}t_j^{\ell+3}=\zeta^{2\ell+4}t_j^{\ell+2}
\]and, hence,
\begin{equation*}\label{8.11.15}
a(\zeta,t_j\zeta) +b(\zeta,t_j\zeta)t_j^{\ell+3} =\zeta^{\ell+1}t_j^{\ell+2}.
\end{equation*} If $\sum a_{\mu\nu}z^\mu w^\nu$ and $\sum b_{\mu\nu}z^\mu w^\nu$ are the Taylor series at the origin
of $a$ and $b$, respectively,
this means that
\[\sum_{\mu,\nu=0}^\infty \Big(a_{\mu\nu}t_j^\nu+b_{\mu\nu} t_j^{\nu+\ell+3}\Big)\zeta^{\mu+\nu}
=\zeta^{\ell+1}t_j^{\ell+2}
\]for all  $1\le j\le 2\ell+5$ and $\zeta\in\Omega$. Comparing the coefficients of $\zeta^{\ell+1}$, this yields
\[
\sum_{\mu+\nu=\ell+1}a_{\mu\nu}t_j^\nu+\sum_{\mu+\nu=\ell+1}b_{\mu\nu}t_j^{\nu+\ell+3}=t_j^{\ell+2},\quad\quad 1\le j\le 2\ell+5.
\]i.e.,
\[
\sum_{\nu=0}^{\ell+1}\alpha^{}_{\ell+1-\nu,\nu}t_j^\nu+\sum_{\nu=\ell+3}^{2\ell+4}\beta^{}_{2\ell+4-\nu,\nu-\ell-3}t_j^{\nu}=t_j^{\ell+2},
\quad\quad 1\le j\le 2\ell+5.
\]
Hence, the system of $2\ell+5$ linear equations in $2\ell+5$ variables
\[
\sum_{\nu=0}^{2\ell+4}t_j^\nu x_\nu=0, \quad\quad 1\le j\le 2\ell+5
\]has a non-trivial solution, namely one with $x_{\ell+2}^{}=-1$. This not possible, as
\[
\det\begin{pmatrix}1&t^{}_1&t_1^2&\ldots &t^{2\ell+4}_1\\&&\ldots&&\\1&t_{2\ell+5}^{}&t_{2\ell+5}^2&\ldots& t_{2\ell+5}^{2\ell+4}\end{pmatrix}
=\prod_{1\le i<j\le 2\ell+5}(t_i-t_j)\not=0.\]
\end{proof}

{\em Proof of Theorem \ref{6.1.16''}.} Assume there exist a neighborhood $U$ of the origin in $\C^2$ and a holomorphic map $H:U\to\mathrm{GL}(2,\C)$ such that $H^{-1}AH=B$ on $X\cap U$. If $H=\big(\begin{smallmatrix} a&b\\c&d\end{smallmatrix}\big)$, then, by \eqref{3.9.16} and \eqref{3.9.16'}, in particular, it follows that
\[
az^{2+\ell}w^{2+\ell}+c z^{3+\ell}= b w^{3+\ell}=c z^{3+\ell}+d z^{2+\ell} w^{2+\ell}\quad\text{on}\quad X\cap U,
\] which implies by Lemmas \ref{6.1.16} and \ref{6.1.16'} that $a(0)=b(0)=c(0)=d(0)=0$, i.e., $H(0)=0$, which contradicts the assumption that $H(0)$ is invertible.
\qed

\section{A global counterexample}\label{global example}

Let $v_1,v_2,v_3$ denote the canonical complex coordinate functions on $\C^3$, and let $x_j=\rea v_j$ and $y_j=\im v_j$. Set
\begin{align*}h=v_1+&iv_2,\quad h^*=v_1-iv_2,\\
\s^2=\big\{y_1=y_2=y_3=0\big\}&\cap \big\{x_1^2+x_2^2+x_3^2=1\big\},\quad \s^1=\s^2\cap \big\{x_3=0\big\}.
\end{align*}
Then $hh^*= x_1^2+x_2^2=1$ on $\s^1$. Therefore we can find a neighborhood $N(\s^2)$ in $\C^3$ of $\s^2$ and  $\varepsilon>0$   such that
\begin{equation}\label{7.1.16}
\big\vert  hh^*-1\big\vert<\frac{1}{2}\quad\text{on}\quad N(\s^2)\cap \{-2\varepsilon<x_3<2\varepsilon\}.
\end{equation}
Set
\[
\rho=\big(x_1^2+x_2^2+x_3^2-1\big)^3+y_1^2+y_2^2+y_3^2.
\]Then $\s^2=\{\rho=0\}$ and, making $\varepsilon$ smaller, we can achieve that
\[
\s^2_\varepsilon=\{\rho<\varepsilon\}\subseteq N(\s^2).
\]Moreover, we can choose $\varepsilon$ so small that $\rho$ is strictly plurisubharmonic in $\s^2_\varepsilon$. Then
$\s^2_\varepsilon$ is Stein. Set \[U_+=\s^2_\varepsilon\cap\{x_3>-\varepsilon\}\quad\text{and}\quad U_-=\s^2_\varepsilon\cap \{x_3<\varepsilon\}.
\]

\begin{lem}\label{8.1.16} {\em(i)} There exist holomorphic  $a_\pm,b_\pm,c_\pm,d_\pm:U_\pm\to \C$ such that
\begin{align}&\label{8.1.16'}\begin{pmatrix}a_\pm(\zeta)&b_\pm(\zeta)\\c_\pm(\zeta)&d_\pm(\zeta)\end{pmatrix}\in \mathrm{GL}(2,\C)\quad\text{for all}\quad \zeta\in U_\pm,\\&\label{8.1.16''} \begin{pmatrix}h&0\\0&h^*\end{pmatrix}
=\begin{pmatrix}a_+&b_+\\c_+&d_+\end{pmatrix}
\begin{pmatrix}a_-&b_-\\c_-&d_-\end{pmatrix}^{-1}\quad\text{on}\quad U_+\cap U_-.
\end{align}
{\em (ii)} There do not exist continuous functions $f_\pm^{}:U_\pm^{}\to \C^*$ such that
\begin{equation}\label{9.1.16}h=\frac{f_+}{f_-}\quad\text{on}\quad U_+\cap U_-.
\end{equation}
\end{lem}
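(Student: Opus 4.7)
The plan is to treat part~(i) as a non-abelian Cousin problem for the sheaf $\Cal O^{\mathrm{GL}(2,\C)}$ and to treat part~(ii) as an obstruction argument via winding numbers (first Chern class). The reason (i) holds while the naive scalar analogue (ii) fails is exactly that $\pi_1(\mathrm{SU}(2))=0$, whereas $\pi_1(\C^*)=\Z$.

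For part~(i), interpret $\mathrm{diag}(h,h^*)\in\Cal O^{\mathrm{GL}(2,\C)}(U_+\cap U_-)$ as a cocycle on the two-set cover $\{U_+,U_-\}$ of $\s^2_\varepsilon$, and note that $\s^2_\varepsilon$ is Stein because $\rho$ is strictly plurisubharmonic on it. I would apply Grauert's Oka principle (Satz~I of \cite{Gr}; see Remark~\ref{3.9.16**}): for a complex Lie group $G$ and a Stein space $X$, $H^1(X,\Cal O^G)\to H^1(X,\Cal C^G)$ is bijective, so it suffices to produce a continuous trivialization. Now $\s^2_\varepsilon$ deformation retracts onto $\s^2$, and under this retraction $\{U_+,U_-\}$ becomes two open sets covering two closed hemispheres, with $U_+\cap U_-$ retracting to a neighborhood of the equator $\s^1$. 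The topological type of the principal $\mathrm{GL}(2,\C)$-bundle defined by the cocycle is determined by the clutching loop $\s^1\to\mathrm{GL}(2,\C)$ obtained by restriction. Since $h|_{\s^1}(\theta)=v_1+iv_2=e^{i\theta}$ and $h^*|_{\s^1}(\theta)=e^{-i\theta}$, this loop is $\theta\mapsto\mathrm{diag}(e^{i\theta},e^{-i\theta})$, which lies in $\mathrm{SU}(2)\cong S^3$ and is therefore null-homotopic in $\mathrm{GL}(2,\C)$ (equivalently, its determinant is constantly $1$, so it represents $0\in\pi_1(\mathrm{GL}(2,\C))\cong\Z$). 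Hence the cocycle is continuously trivial, and Grauert upgrades this to a holomorphic factorization $\mathrm{diag}(h,h^*)=H_+H_-^{-1}$ with $H_\pm\in\Cal O^{\mathrm{GL}(2,\C)}(U_\pm)$; reading off the entries of $H_\pm$ gives the desired $a_\pm,b_\pm,c_\pm,d_\pm$.

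For part~(ii), I argue by contradiction. Assume $h=f_+/f_-$ with continuous $f_\pm\colon U_\pm\to\C^*$, and restrict everything to $\s^2$. The sets $U_\pm\cap\s^2$ are open sets each containing a closed hemisphere of $\s^2$, hence are contractible. On a contractible space, every continuous map to $\C^*$ admits a continuous logarithm; in particular, the restriction $f_\pm|_{\s^1}$ has winding number zero, because it extends continuously to a $\C^*$-valued map on a disk. Consequently $\mathrm{wind}(h|_{\s^1})=\mathrm{wind}(f_+|_{\s^1})-\mathrm{wind}(f_-|_{\s^1})=0$. But $h|_{\s^1}(\theta)=\cos\theta+i\sin\theta=e^{i\theta}$ has winding number $1$, a contradiction.

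\textbf{Main obstacle.} The substantive step is the topological analysis in (i): recognizing that the clutching loop lies in $\mathrm{SU}(2)\cong S^3$ and is therefore null-homotopic. Once this is in hand, Grauert's theorem delivers the holomorphic decomposition automatically. The subtle point is that (ii) explicitly rules out the naive diagonal factorization with $b_\pm=c_\pm=0$ (which would amount to $h=a_+/a_-$), so the matrix decomposition must genuinely use off-diagonal entries; this is possible precisely because the nontrivial windings of $h$ and $h^*$ cancel in the determinant, and $\pi_1(\mathrm{SU}(2))=0$ provides enough room to realize the cancellation continuously in the $2\times 2$ matrix.
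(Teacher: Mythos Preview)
Your proposal is correct and follows the same overall strategy as the paper: for (i), reduce via Grauert's Oka principle to continuous triviality of the $\mathrm{GL}(2,\C)$-cocycle; for (ii), derive a contradiction from winding numbers on the equator $\s^1$.

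The one noteworthy difference is in how continuous triviality is established in (i). You argue abstractly: the clutching loop $\theta\mapsto\mathrm{diag}(e^{i\theta},e^{-i\theta})$ lands in $\mathrm{SU}(2)$, and since $\pi_1(\mathrm{SU}(2))=0$ the bundle over $\s^2$ (and hence over its thickening $\s^2_\varepsilon$) is topologically trivial. The paper instead writes down an explicit continuous extension
\[
C_+(\zeta)=\begin{pmatrix}\chi\big(x_3(\zeta)\big)h(\zeta)&1-\chi\big(x_3(\zeta)\big)\\\chi\big(x_3(\zeta)\big)-1&\chi\big(x_3(\zeta)\big)h^*(\zeta)\end{pmatrix}
\]
(with a cutoff $\chi$) and verifies directly that $\det C_+\not=0$ on $U_+$, using $|hh^*-1|<\tfrac12$ near the equator. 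Your homotopical argument is cleaner conceptually and explains \emph{why} the trick works (the windings of $h$ and $h^*$ cancel in the determinant), while the paper's explicit formula avoids invoking bundle classification and the deformation retraction $\s^2_\varepsilon\to\s^2$. For (ii) the two arguments are virtually identical; the paper carries out the contractibility step by an explicit homotopy of latitude circles to the poles, which is exactly your ``extends over a disk'' statement.
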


\begin{proof} (i) Since $\s_\varepsilon^2$ is Stein and $\s^2_\varepsilon=U_+\cup U_-$, by Grauert's Oka principle [Satz I]\cite{Gr}, [Theorem 5.3.1 (ii)]\cite{Fc}, it is sufficient to  find a continuous  $C_+:U_+\to \mathrm{GL}(2,\C)$ with
\begin{equation}\label{8.1.16-}
\begin{pmatrix}h&0\\0&h^*\end{pmatrix}=C^{}_+\quad\text{on}\quad U_+\cap U_-,
\end{equation}which can be done as follows: Take a continuous function $\chi:\R\to[0,1]$ such that $\chi(t)=1$ if $t\le \varepsilon$  and $\chi(t)=0$ if $ t\ge 2\varepsilon$, and define
\[
C_+(\zeta)=\begin{pmatrix}\chi\big(x_3(\zeta)\big)h(\zeta)&1-\chi\big(x_3(\zeta)\big)\\\chi\big(x_3(\zeta)\big)-1&\chi\big(x_3(\zeta)\big)h^*(\zeta)
\end{pmatrix}\quad \text{for}\quad\zeta\in U_+.
\]If $\zeta\in U_+\cap U_-$, then $-\varepsilon<x_3(\zeta)<\varepsilon$ and therefore $\chi\big(x_3(\zeta)\big)=1$, which implies \eqref{8.1.16-}.
It remains to prove that $\det C_+(\zeta)\not=0$  for all $\zeta\in U_+$.
If $\zeta\in U_+$ with $x_3(\zeta)<2\varepsilon$, then,  by \eqref{7.1.16}, $\rea\big(h(\zeta)h^*(\zeta)\big)>1/2$, which yields
\[
\rea\det C_+(\zeta)\ge \frac{1}{2}\Big(\chi\big(x_3(\zeta)\big)\Big)^2+\Big(1-\Big(\chi\big(x_3(\zeta)\big)\Big)^2\ge \frac{1}{2}.
\]If  $\zeta\in U_+$ with $x_3(\zeta)\ge 2\varepsilon$, then $\chi\big(x_3(\zeta)\big)=0$ and, hence, $\det C_+(\zeta)=1$.

 (ii)
Assume such functions exist. Then, for $0\le s\le 1$,
 we have  continuous closed curves $\gamma^+_s:[0,2\pi]\to \C^*$ and $\gamma^-_s:[0,2\pi]\to \C^*$, well-defined by
 \begin{align*}
 &\gamma^+_s(t)=f_+^{}\Big(\big(\sqrt{1-s^2}\cos t,\sqrt{1-s^2}\sin t,s\big)\Big),\\
 &\gamma^-_s(t)=f_-^{}\Big(\big(\sqrt{1-s^2}\cos t,\sqrt{1-s^2}\sin t,-s\big)\Big).
 \end{align*}
 Let
 \[
 \mathrm{Ind\,}\gamma_s^\pm:=\frac{1}{2\pi}\int_0^{2\pi}\frac{(\gamma_s^{\pm})'(t)}{\gamma_s^\pm(t)}\,dt
 \] be the winding number of $\gamma_s^\pm$. It is clear that $\mathrm{Ind\,}\gamma_s^\pm$ depends continuously on $s$, and
 it is well known that $\mathrm{Ind\,}\gamma_s^\pm$ is always an integer. Therefore
 \[
 \mathrm{Ind\,}\gamma_{1}^+=\mathrm{Ind\,}\gamma_0^+\quad\text{and}\quad \mathrm{Ind\,}\gamma_{1}^-=\mathrm{Ind\,}(\gamma_0^-).
 \] Since $\gamma^+_1$ and $\gamma_1^-$ are constant, it follows that $\mathrm{Ind\,}(\gamma_0^+)=\mathrm{Ind\,}(\gamma_0^-)=0$ and, hence,
 \begin{equation}\label{9.1.16'}
 \mathrm{Ind\,}\frac{\gamma_0^+}{\gamma_0^-}=0.
 \end{equation}By definition of $h$, $h\big(\cos t,\sin t,0\big)=\cos t+i\sin t=e^{it}$.
 By \eqref{9.1.16} this yields
\[
e^{it}=\frac{f_+\big(\cos t,\sin t,0\big)}{f_-\big(\cos t,\sin t,0\big)}=\frac{\gamma^+_0(t)}{\gamma^-_0(t)},\qquad 0\le t\le 2\pi,
\]which contradicts \eqref{9.1.16'}.
\end{proof}
Now, using also the notations introduced in  Section \ref{2.9.16'''}, we set
\[X=\s^2_\varepsilon\times \B^2,\qquad  X_\pm=U_\pm\times \B^2,\] and, for $(\zeta,\eta)\in X$,
\[\widetilde A(\zeta,\eta)=A(\eta),\quad \widetilde B(\zeta,\eta)=B(\eta),\quad
\widetilde h(\zeta,\eta)=h(\zeta),\quad\widetilde h^*(\zeta,\eta)=h^*(\zeta).\]
Further, let $a_\pm,b_\pm,c_\pm,d_\pm$ be as in Lemma \ref{8.1.16} (i), and define holomorphic maps $\Theta_\pm:X_\pm\to \mathrm{Mat}(4\times 4,\C)$ by the block matrices
\[
\Theta_\pm(\zeta,\eta)=\begin{pmatrix}a_\pm(\zeta)I_2& b_\pm(\zeta)I_2\\ c_\pm(\zeta)I_2&d_\pm(\zeta)I_2\end{pmatrix},\quad (\zeta,\eta)\in X_\pm.
\]Then, by \eqref{8.1.16'} and \eqref{8.1.16''},
$\Theta_\pm(\zeta,\eta)\in \mathrm{GL}(4,\C)$ for all $(\zeta,\eta)\in X_\pm$, and
\begin{equation}\label{10.1.16}
\begin{pmatrix}\widetilde h I_2&0\\0&\widetilde h^*I_2\end{pmatrix}=\Theta^{}_+\Theta_-^{-1}\quad \text{on}\quad X_+\cap X_-.
\end{equation}Since, obviously,
\[
\begin{pmatrix}\widetilde A&0\\0&\widetilde B\end{pmatrix}\begin{pmatrix}\widetilde h I_2&0\\0&\widetilde h^*I_2\end{pmatrix}=\begin{pmatrix}\widetilde h I_2&0\\0&\widetilde h^*I_2\end{pmatrix}\begin{pmatrix}\widetilde A&0\\0&\widetilde B\end{pmatrix}\quad\text{on}\quad X,
\]this implies that
\begin{equation}\label{2.9.16}
\Theta_+^{}\begin{pmatrix}\widetilde A&0\\0&\widetilde B\end{pmatrix}\Theta_+^{-1}=\Theta_-^{}\begin{pmatrix}\widetilde A&0\\0&\widetilde B\end{pmatrix}\Theta_-^{-1}\quad\text{on}\quad X_+\cap X_-.
\end{equation}Let $\Phi:X\to \mathrm{Mat}(4\times 4,\C)$ be defined by the two sides of \eqref{2.9.16}.

\begin{thm}\label{10.1.16'} $\Phi$ and $\big(\begin{smallmatrix}\widetilde A&0\\0&\widetilde B\end{smallmatrix}\big)$ are

{\em (a)} locally holomorphically similar on $X$,

{\em (b)} globally $\Cal C^\ell$ similar on $X$,

{\em(c)} not globally $\Cal C^\infty$ similar on $X$.
\end{thm}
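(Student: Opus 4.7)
Part (a) is immediate from \eqref{2.9.16}: on each $X_\pm$ the holomorphic map $\Theta_\pm\colon X_\pm\to\mathrm{GL}(4,\C)$ already conjugates $Y:=\bigl(\begin{smallmatrix}\widetilde A&0\\0&\widetilde B\end{smallmatrix}\bigr)$ to $\Phi$, and $\{X_+,X_-\}$ covers $X$.

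For part (b), the plan is to exploit that the $\Cal C^\ell$ intertwiner $\widetilde S$ (satisfying $\widetilde S\widetilde B=\widetilde A\widetilde S$) makes the $\Cal C^\ell$ commutant of $Y$ much larger than the holomorphic one. A short block computation shows that every ``$\widetilde S$-twisted $M$-form''
\[
  M(a,b,c,d):=\begin{pmatrix} aI_2 & b\widetilde S\\ c\widetilde S^{-1} & dI_2\end{pmatrix}
\]
commutes with $Y$; these matrices are closed under multiplication with composition law identical to ordinary $2\times 2$ scalar multiplication of $(a,b,c,d)$, and $M(a,b,c,d)\in\mathrm{GL}(4,\C)$ iff $ad-bc\ne 0$. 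I will then feed the smooth cutoff $\chi=\chi(x_3)$ from the proof of Lemma \ref{8.1.16} (equal to $1$ on $\{x_3\le\varepsilon\}$ and $0$ on $\{x_3\ge 2\varepsilon\}$) into this embedding, setting
\[
  G_+:=M\!\bigl(\chi\widetilde h^{-1},\,1-\chi,\,\chi-1,\,\chi(\widetilde h^*)^{-1}\bigr)\text{ on }X_+,\qquad G_-:=I_4\text{ on }X_-.
\]
The zeros of $\widetilde h$ on $\s^2_\varepsilon$ lie at $x_3=\pm 1$, well inside $\{\chi=0\}$, so $G_+$ is of class $\Cal C^\ell$; invertibility on the transition region $\{\varepsilon<x_3<2\varepsilon\}$ will follow from the scalar ``determinant'' $\chi^2/(\widetilde h\widetilde h^*)+(1-\chi)^2$ by combining \eqref{7.1.16} (which places $1/(\widetilde h\widetilde h^*)$ in the open right half-plane) with $\chi^2+(1-\chi)^2\ge 1/2$. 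Since $G_+$ reduces to $D^{-1}$ on the overlap $X_+\cap X_-$, the map $\Xi:=\Theta_\pm G_\pm$ will be a well-defined global $\Cal C^\ell$ map $X\to\mathrm{GL}(4,\C)$, and $\Xi Y\Xi^{-1}=\Theta_\pm(G_\pm YG_\pm^{-1})\Theta_\pm^{-1}=\Theta_\pm Y\Theta_\pm^{-1}=\Phi$ gives the desired global $\Cal C^\ell$ similarity.

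For part (c), since $X=\s^2_\varepsilon\times\B^2$ is Stein (product of Stein spaces), Corollary \ref{26.10.16''} reduces the claim to the nonexistence of a \emph{global holomorphic} similarity. The plan is to assume a hypothetical holomorphic $H:X\to\mathrm{GL}(4,\C)$ with $HYH^{-1}=\Phi$, put $K_\pm:=\Theta_\pm^{-1}H$ (holomorphic on $X_\pm$ and commuting with $Y$), and restrict to the slice $\{\zeta\}\times\B^2$. The four $2\times 2$ blocks of $\eta\mapsto K_\pm(\zeta,\eta)$ satisfy $AK_{11}=K_{11}A$, $BK_{22}=K_{22}B$, $AK_{12}=K_{12}B$, $BK_{21}=K_{21}A$, so Lemma \ref{5.1.16*} applied at $\eta=0$ forces $K_\pm(\zeta,0)=\mathrm{diag}(\lambda_1^\pm(\zeta)I_2,\lambda_2^\pm(\zeta)I_2)$ with $\lambda_j^\pm\in\Cal O(U_\pm,\C^*)$. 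Using $\Theta_\pm=M_\pm\otimes I_2$ and $M_+=\mathrm{diag}(h,h^*)M_-$ from \eqref{8.1.16''}, equating the two expressions for $H(\zeta,0)$ on $U_+\cap U_-$ and exploiting $\lambda_j^\pm\ne 0$ will force $M_-^{-1}\mathrm{diag}(h,h^*)M_-$ to be diagonal and yield $\lambda_1^-=h\lambda_1^+$ (possibly after relabeling the eigenvalues). Then $g_\pm:=1/\lambda_1^\pm\in\Cal O(U_\pm,\C^*)$ satisfy $h=g_+/g_-$ on the overlap, contradicting Lemma \ref{8.1.16}(ii).

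The main obstacle lies in part (b): one must recognize that the $\widetilde S$-twisted $M$-forms constitute a $\Cal C^\ell$ subgroup of the commutant of $Y$ large enough to carry the continuous trivialization of $D$ supplied by the proof of Lemma \ref{8.1.16}(i), and then verify invertibility of the cutoff-extended $G_+$ throughout $X_+$ (this is where the fine bound \eqref{7.1.16} genuinely enters). Once (b) is in place, part (c) is a clean reduction via Corollary \ref{26.10.16''} to the topological obstruction of Lemma \ref{8.1.16}(ii), driven by the local non-similarity of $A$ and $B$ at $0\in\B^2$ encoded in Lemma \ref{5.1.16*}.
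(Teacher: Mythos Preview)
Your argument is correct in all three parts. Parts (a) and (c) follow essentially the same route as the paper (in (c) you work with $K_\pm=\Theta_\pm^{-1}H$ where the paper works with $\Theta^{-1}\Theta_\pm$, but the algebra and the use of Lemma \ref{5.1.16*} at $\eta=0$ are identical, and both finish by contradicting Lemma \ref{8.1.16}(ii)).

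For part (b), your construction is genuinely different from the paper's. The paper never uses a cutoff: it observes that $\mathrm{diag}(I_2,\widetilde S)$ commutes with $\mathrm{diag}(\widetilde hI_2,\widetilde h^*I_2)$, hence the two expressions $\mathrm{diag}(I_2,\widetilde S^{-1})\,\Theta_\pm^{-1}\,\mathrm{diag}(I_2,\widetilde S)\,\Theta_\pm$ agree on $X_+\cap X_-$ and patch to a global $\Cal C^\ell$ map $\Psi$, after which \eqref{2.9.16''} and \eqref{10.1.16+} give $\Psi^{-1}Y\Psi=\Phi$. Your approach instead packages the $\widetilde S$-twisted commutant as the explicit subgroup $\{M(a,b,c,d)\}\cong\mathrm{GL}(2,\C)$ and feeds the cutoff construction of Lemma \ref{8.1.16}(i) directly into it, producing $\Xi=\Theta_\pm G_\pm$. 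Both methods exploit the same structural fact (the factorization $M(a,b,c,d)=\mathrm{diag}(I_2,\widetilde S^{-1})\bigl(\begin{smallmatrix}a&b\\c&d\end{smallmatrix}\otimes I_2\bigr)\mathrm{diag}(I_2,\widetilde S)$ makes this explicit), but the paper's version is slightly slicker in that it avoids repeating the invertibility analysis of the cutoff already done inside Lemma \ref{8.1.16}, while yours has the virtue of being completely explicit and self-contained. One minor point: you should take $\chi$ to be $\Cal C^\infty$ rather than merely continuous as stated in Lemma \ref{8.1.16}, so that the scalar entries of $G_+$ are smooth and the regularity of $\Xi$ is limited only by $\widetilde S$.
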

\begin{proof}The local holomorphic similarity is clear by definition of $\Phi$.

To prove (b), let $S$ be as in Section \ref{2.9.16'''} and $\widetilde S(\zeta,\eta):=S(\eta)$ for $(\zeta,\eta)\in X$.
Since  $a_\pm(\zeta)I_2$, $b_\pm(\zeta)I_2$, $c_\pm(\zeta)I_2$ and $d_\pm(\zeta)I_2$ commute with $A(\eta)$, we have
\begin{equation}\label{10.1.16+}
\begin{pmatrix}\widetilde A&0\\0&\widetilde A\end{pmatrix}\Theta_\pm=\Theta_\pm\begin{pmatrix}\widetilde A&0\\0&\widetilde A\end{pmatrix}\quad\text{on}\quad U_\pm.
\end{equation}Moreover, it is clear that $\widetilde S\widetilde h^*I_2= \widetilde h^*I_2 \widetilde S$ and therefore
\[
\begin{pmatrix}I_2&0\\0&\widetilde S\end{pmatrix}\begin{pmatrix}\widetilde hI_2&0\\0&\widetilde h^*I_2\end{pmatrix}=\begin{pmatrix}\widetilde hI_2&0\\0&\widetilde h^*I_2\end{pmatrix}\begin{pmatrix}I_2&0\\0&\widetilde S\end{pmatrix}\quad\text{on}\quad X,
\]which implies by \eqref{10.1.16} that
\[
\Theta_+^{-1}\begin{pmatrix}I_2&0\\0&\widetilde S\end{pmatrix}\Theta_+^{}=\Theta_-^{-1}\begin{pmatrix}I_2&0\\0&\widetilde S\end{pmatrix}\Theta_-^{}
\quad\text{on}\quad X_+\cap X_-
\]and further
\[
\begin{pmatrix}I_2&0\\0&\widetilde S^{-1}\end{pmatrix}\Theta_+^{-1}\begin{pmatrix}I_2&0\\0&\widetilde S\end{pmatrix}\Theta_+^{}=
\begin{pmatrix}I_2&0\\0&\widetilde S^{-1}\end{pmatrix}\Theta_-^{-1}\begin{pmatrix}I_2&0\\0&\widetilde S\end{pmatrix}\Theta_-^{}
\quad\text{on}\quad X_+\cap X_-.
\]
Let $\Psi:X\to \mathrm{GL}(4,\C)$ be the $\Cal C^\ell$ map defined by the two sides of the last equality. Then, by  \eqref{2.9.16''},
\begin{equation*}
\Psi^{-1}\begin{pmatrix}\widetilde A &0\\0&\widetilde B\end{pmatrix}\Psi=\Theta_\pm^{-1}\begin{pmatrix}I_2&0\\0&\widetilde S^{-1}\end{pmatrix}\Theta_\pm^{}\begin{pmatrix}\widetilde A &0\\0&\widetilde A\end{pmatrix}
\Theta_\pm^{-1}\begin{pmatrix}I_2&0\\0&\widetilde S\end{pmatrix}\Theta_\pm^{}\quad\text{on}\quad X_\pm.
\end{equation*}In view of \eqref{10.1.16+}, this implies that
\[\Psi^{-1}\begin{pmatrix}\widetilde A &0\\0&\widetilde B\end{pmatrix}\Psi=\Theta_\pm^{-1}\begin{pmatrix}I_2&0\\0&\widetilde S^{-1}\end{pmatrix}\begin{pmatrix}\widetilde A &0\\0&\widetilde A\end{pmatrix}\begin{pmatrix}I_2&0\\0&\widetilde S\end{pmatrix}\Theta_\pm^{}\quad\text{on}\quad X_\pm,
\] and further, again by \eqref{2.9.16''},
\[\Psi^{-1}\begin{pmatrix}\widetilde A &0\\0&\widetilde B\end{pmatrix}\Psi=\Theta_\pm^{-1}\begin{pmatrix}\widetilde A &0\\0&\widetilde B\end{pmatrix}
\Theta_\pm^{}\quad\text{on}\quad X_\pm.
\]By definition of $\Phi$, this means that $\Psi^{-1}\big(\begin{smallmatrix}\widetilde A&0\\0&\widetilde A\end{smallmatrix}\big)\Psi=\Phi$ on $X$, which completes the proof of (b).

To prove (c), we assume that  $\Phi$ and $\big(\begin{smallmatrix}\widetilde A&0\\0&\widetilde B\end{smallmatrix}\big)$ are globally $\Cal C^\infty$ similar on $X$. Since $X$ is Stein, then, by Theorem \ref{24.8.16+}, $\Phi$ and $\big(\begin{smallmatrix}\widetilde A&0\\0&\widetilde B\end{smallmatrix}\big)$ are even globally holomorphically  similar on $X$, i.e., we have a holomorphic map $\Theta:X\to \mathrm{GL}(4,\C)$ with
\[
\Theta^{-1}\Phi\Theta=\begin{pmatrix}\widetilde A&0\\0&\widetilde B\end{pmatrix}\quad\text{on}\quad X.
\]By definition of $\Phi$ this means that
\[
\Theta^{-1}_{}\Theta_\pm^{}\begin{pmatrix}\widetilde A&0\\0&\widetilde B\end{pmatrix}\Theta_\pm^{-1}\Theta=\begin{pmatrix}\widetilde A&0\\0&\widetilde B\end{pmatrix}\quad\text{on}\quad X_\pm,
\]i.e.,
\begin{equation*}\label{11.1.16}
\Theta^{-1}_{}\Theta_\pm^{}\begin{pmatrix}\widetilde A&0\\0&\widetilde B\end{pmatrix}=\begin{pmatrix}\widetilde A&0\\0&\widetilde B\end{pmatrix}\Theta^{-1}\Theta_\pm^{}\quad\text{on}\quad X_\pm.
\end{equation*}
If $C_\pm$, $D_\pm$, $E_\pm$, $F_\pm$ are the $2\times 2$ matrices with
\[
\Theta^{-1}_{}\Theta_\pm^{}=\begin{pmatrix}C_\pm&D_\pm\\E_\pm&F_\pm\end{pmatrix},
\]then this means that, on $X_\pm$,
\[C_\pm \widetilde A=\widetilde AC_\pm,\quad F_\pm \widetilde B=\widetilde B F_\pm,\quad E_\pm \widetilde A=\widetilde BE_\pm,\quad D_\pm \widetilde B=\widetilde AD_\pm,
\]i.e., for each fixed $\zeta\in U_\pm$, we have, on $\B^2$,
\begin{align*}&C_\pm(\zeta,\cdot)A= AC_\pm(\zeta,\cdot),\quad F_\pm(\zeta,\cdot) B=B F_\pm(\zeta,\cdot),\\
&E_\pm(\zeta,\cdot) A= BE_\pm(\zeta,\cdot),\quad D_\pm(\zeta,\cdot) B= AD_\pm(\zeta,\cdot).
\end{align*}
By Lemma \ref{5.1.16*} this yields that, for each $\zeta\in U_\pm$, there exist $\gamma_\pm(\zeta), \varphi_\pm(\zeta)\in \C$ with
\begin{equation}\label{3.9.16+}
\Theta_{}^{-1}(\zeta,0)^{}\Theta_{\pm}^{}(\zeta,0)=\begin{pmatrix}\gamma_\pm(\zeta)I_2&0\\0&\varphi_\pm(\zeta)I_2\end{pmatrix}\quad\text{for all}\quad \zeta\in U_\pm.
\end{equation} Since the maps $\Theta^{-1}_{}\Theta_\pm^{}$ are holomorphic and have invertible values on $X_\pm$, the so defined functions $\gamma_\pm$ and $\varphi_\pm$ must be holomorphic and different from zero on $U_\pm$. Moreover, by \eqref{10.1.16}, it follows from the equations \eqref{3.9.16+} that, for $\zeta\in U_+\cap U_-$,
\begin{equation*}
\Theta(\zeta,0)^{-1}_{}\begin{pmatrix}h(\zeta)I_2&0\\0&h^*(\zeta)I_2\end{pmatrix}
\Theta(\zeta,0)=\begin{pmatrix}\gamma^{}_+(\zeta,0)\gamma^{}_-(\zeta,0)^{-1}I_2&0\\0&\varphi^{}_+(\zeta)\varphi_-^{}(\zeta,0)^{-1}I_2
\end{pmatrix}.
\end{equation*}In particular, for each $\zeta\in U_+\cap U_+$, the matrices
\[\begin{pmatrix}h(\zeta)I_2&0\\0&h^*(\zeta)I_2\end{pmatrix}\quad\text{and}\quad
\begin{pmatrix}\gamma^{}_+(\zeta,0)\gamma^{}_-(\zeta,0)^{-1}&0\\0&\varphi^{}_+(\zeta)\varphi_-^{}(\zeta,0)^{-1}
\end{pmatrix}
\] are similar; hence they have the same eigenvalues. In particular,
\begin{equation}\label{13.1.16-}
h(\zeta)\in\Big\{\gamma^{}_+(\zeta,0)\gamma^{}_-(\zeta,0)^{-1},\varphi^{}_+(\zeta)\varphi_-^{}(\zeta,0)^{-1}\Big\}\quad\text{if}\quad \zeta\in U_+\cap U_-.
\end{equation} Consider the open sets
\[V\gamma:=\big\{\zeta\in U_+\cap U_-\,\vert\, h(\zeta)=\gamma^{}_+(\zeta,0)\gamma^{}_-(\zeta,0)^{-1}\big\}
\] and
\[V_\varphi:=\big\{\zeta\in U_+\cap U_-\,\vert\, h(\zeta)=\varphi^{}_+(\zeta,0)\varphi^{}_-(\zeta,0)^{-1}\big\}.
\]Then, by \eqref{13.1.16-}, $V_\gamma\cup V_\varphi= U_+\cap U_-$. Hence, at least one of these sets, say $V_\gamma$, is non-empty. Since the functions $h$ and  $\gamma^{}_+(\cdot,0)\gamma^{}_-(\cdot,0)^{-1}$  both are holomorphic on $U_+\cap U_-$ and $U_+\cap U_-$ is connected, it follows that $h(\zeta)=\gamma^{}_+(\zeta,0)\gamma^{}_-(\zeta,0)^{-1}$ for all $\zeta\in U_+\cap U_-$, which is not possible, by Lemma \ref{8.1.16} (ii).
\end{proof}

\end{document}